\documentclass[a4paper,12pt]{amsart}
\usepackage[utf8]{inputenc}
\usepackage{epsfig}
\usepackage{amsthm,amsfonts}
\usepackage{amssymb,graphicx,color}

\setlength{\topmargin}{0mm}
\setlength{\textwidth}{16cm}
\setlength{\textheight}{248mm}
\setlength{\oddsidemargin}{0cm}
\setlength{\evensidemargin}{0mm}

\newtheorem{Teo}{Theorem}[section]
\newtheorem{Cor}[Teo]{Corollary}
\newtheorem{Prop}[Teo]{Proposition}
\newtheorem{Def}[Teo]{Definition}
\newtheorem{Lem}[Teo]{Lemma}
\newtheorem{Exam}[Teo]{Example}
\newtheorem{remark}[Teo]{Remark}

\newcommand{\R}{\mathbb R}
\newcommand{\Q}{\mathbb Q}
\newcommand{\Z}{\mathbb Z}

\newcommand{\F}{\mathbb F}

\begin{document}
	
\title[Moderately discontinuous homology of real surfaces
]{Moderately discontinuous homology of real surfaces
}

\author[]{Davi Lopes Medeiros$\dagger$}\thanks{$\dagger$Research supported by the Serrapilheira Institute (grant number Serra– R-2110-39576)}
\address{Departamento de Matem\'atica, Universidade Federal do Cear\'a
	(UFC), Campus do Pici, Bloco 914, Cep. 60455-760. Fortaleza-Ce,
	Brasil}\email{profdavilopes@gmail.com}
\author[]{Jos\'e Edson Sampaio$\star$}\thanks{$\star$Research partially supported by CNPq-Brazil grant 310438/2021-7 and by the Serrapilheira Institute (grant number Serra -- R-2110-39576).}
\address{Departamento de Matem\'atica, Universidade Federal do Cear\'a
	(UFC), Campus do Pici, Bloco 914, Cep. 60455-760. Fortaleza-Ce,
	Brasil}\email{edsonsampaio@mat.ufc.br}
\author[]{Emanoel Souza}
\address{Departamento de Matem\'atica, Universidade Estadual do Cear\'a
	(UECE), Campus Itapreri, Av. Dr. Silas Munguba, 1700, Cep. 60714-903. Fortaleza-CE, Brasil}\email{emanoelfs.cdd@gmail.com}
\subjclass[2020]{51F30;   14B05;  14P10; 55N35}
\keywords{Lipschitz Geometry, Surface singularities, Snakes, Moderately discontinuous homology}

\begin{abstract}
	The Moderately Discontinuous Homology (MD-Homology, for short) was created recently in 2022 by Fern\'andez de Bobadilla at al. and it captures deep Lipschitz phenomena. However, to become a definitive powerful tool, it must be widely comprehended. 
	
	In this paper, we investigate the MD-Homology of definable surface germs for the inner and outer metrics. We completely determine the MD-Homology of surfaces for the inner metric and we present a great variety of interesting MD-Homology of surfaces for the outer metric, for instance, we determine the MD-Homology of some bubbles, snake surfaces, and horns. Furthermore, we explicit the diversity of MD-Homology of surfaces for the outer metric in general, showing how hard it is to completely solve the outer classification problem. On the other hand, we show that, under specific conditions, the weakly outer Lipschitz equivalence determines completely the MD-Homology of surfaces for the outer metric, showing that these two subjects are quite related.
\end{abstract}

\maketitle

\tableofcontents

\section*{Introduction}

Any surface germ $X$, definable in a polynomially bounded o-minimal structure over the real numbers, admits two metrics: the inner metric $d_{inn}$, where the distance between two points is the length of the shortest path connecting them inside $X$, and the outer metric $d_{out}$, where the distance between two points is their distance in the ambient space. Those two metrics define three classification problems up to bi-Lipschitz equivalence: the so-called ``inner classification problem'',  ``outer classification problem'' and ``ambient classification problem''. The first one was solved by Birbrair in \cite{birbrair2008local}, while the second and third ones are still open.

Although the outer and ambient classification problems are not completely solved yet, several important steps have been given in this direction recently. In \cite{GabrielovSouza}, Gabrielov and Souza identified finitely many building blocks for the outer Lipschitz classification of the Valette link of definable surface germs, the so-called ``normal and abnormal zones''. They also proved a weak version of the outer classification problem using the notion of ``weakly outer equivalence''. There are finitely many H\"older triangles contained in a surface $X$ which Valette link contains the normal and abnormal zones. The ones containing the abnormal zones are called snakes or bubbles, and the ones containing the normal zones are, in a certain way, well-behaved and less interesting. They characterized the abnormal zones into three possible structures, called snakes, bubbles, and non-snake bubbles.

In \cite{medeiros2023}, Medeiros and Birbrair introduced the concepts of synchronized and kneadable triangles, which is a suitable decomposition of the Valette link of surface germs that induces locally a Lipschitz ambient triviality on isolated surface germs in $\R^3$. Such a local triviality can become global for surface germs where $d_{inn}$ and $d_{out}$ are equivalent, the so-called ``Lipschitz Normally Embedded (LNE)'' sets. This result, together with the triviality obtained in \cite{jelonek2021}, shows that every LNE surface germ with isolated singularity in $\R^n$, for $n\ge 3$, $n\ne 4$, is Lipschitz ambient trivial. For $n=4$, however, Birbrair et al showed in \cite{medeiros2024} that this result is false.



The Moderately Discontinuous Homology (or MD-Homology, for short) was created in \cite{MDH2022} and was thought to be a Homology Theory that captures important metric data, since it is a complete invariant for complex analytic smooth sets and plane curve singularities. The first examples for which the MD-Homology was determined were the irreducible curves and cones. However, its potential is yet to emerge, once we believe this homology theory will be as important to Lipschitz Geometry as the ordinary homology is to topology. In particular, we believe this theory will contribute to the outer and ambient classification problems, but in order to do so, it must be widely comprehended.

In this paper, we investigate the MD-Homology groups of definable surface germs for the inner and outer metrics. We completely determine the MD-Homology groups of surfaces for the inner metric and we present a great variety of interesting MD-Homology groups of surfaces for the outer metric, as we determine the groups of some bubbles, snakes, and horns. Furthermore, we explicit the diversity of MD-Homology groups of surfaces for the outer metric in general, showing how hard it is to completely solve the outer classification problem. On the other hand, we show that, under specific conditions, the weakly outer Lipschitz equivalence determines completely the MD-Homology groups of surfaces for the outer metric, showing that these two subjects are quite related.

The paper is structured in several sections. Section \ref{sec:citacoes} is devoted to establish the basic notions on Lipschitz Geometry of germs, the main elements used to study surfaces, the concepts of MD-Homology, and its homological tools. In Section \ref{sec:degree-not-1} we calculate the MD-Homology of every surface germ when the degree is higher than 1. In Section \ref{section:MD-Surfaces-inner} we define $b$-contractions on H\"older complexes and use this to completely determine the MD-Homology of any surface germ, with the inner metric. Section \ref{sec:Examples-MDH-Outer} shows how to calculate MD-Homology groups, for the outer metric, of horns and bubble snakes, and Theorem \ref{Teo: realization for MD-Homology} gives a glimpse on how hard the problem to determine the MD-Homology is in the outer metric. Finally, in Section \ref{sec:weakouter-MDH} we study the relation between the weakly outer Lipschitz classification problem and the MD-Homology groups for the outer metric. We show that, if $X$ and $\tilde X$ are two simple $\beta$-snakes that are weakly outer equivalent and have the same structure on their corresponding subsegments and subnodal zones, then they have the same MD-Homology. This results shows that, in a broad class, weakly outer Lipschitz equivalence implies isomorphisms on MD-Homology groups for the outer metric, which is a much wider condition than the obvious outer Lipschitz equivalence.


All sets, functions and maps in this text are assumed to be definable in a polynomially bounded o-minimal structure over $\mathbb{R}$ with the field of exponents $\mathbb{F}$, for example, real semialgebraic or subanalytic. Unless the contrary is explicitly stated, we consider germs at the origin of all sets and maps.
 
\section*{Notations List}\label{sec:notations}
\begin{itemize}
	\item $\|(x_1,...,x_n)\|=(x_1^2+...+x_n^2)^{\frac{1}{2}}$;
	
	\item $\mathbb{S}_r^{n-1}(p)=\{x\in \R^n; \|x-p\|=r\}$; 
	
	\item $\mathbb{S}_r^{n-1}=\mathbb{S}_r^{n-1}(0)$ and $ \mathbb{S}^{n-1}=\mathbb{S}_1^{n-1}(0)$;
	
	\item $X_t = X \cap \mathbb{S}_{t}^{n-1}$
	
	\item $B_r^{n}(p)=\{x\in \R^n; \|x-p\|<r\}$;
	
	\item $C_{a}^{n+1}=\{ (x_1, \dots, x_n, x_{n+1}) \in \mathbb{R}^{n+1} \mid t\ge 0; \; x_1^2 + \dots + x_n^2 \le (ax_{n+1})^2 \}$
	
	\item $ -C_{a}^{n+1}=\{ - p \mid p \in C_a^{n+1} \}$ and $ C_{a}^{n+1}(r) = C_{a}^{n+1} \cap \{x_{n+1}=r\}$; 
	
	\item $U_{a}^{n+1} = \mathbb{R}^{n+1} \setminus -C_{a}^{n+1}$ and  $U_{a}^{n+1}(r) =U_{a}^{n+1} \cap \mathbb{S}_r^{n}$.
	
	\item For $X, Y\in \R^n$, $dist(X,Y):=\inf\{\|x-y\|;x\in X$ and $y\in Y\}$;
	
	\item Let $f,g\colon (0,\varepsilon)\to [0,+\infty)$ be functions. We write $f\lesssim g$ if there is a constant $f(t)\leq C g(t)$ for all $t\in (0,\varepsilon)$. We write $f\approx g$ if $f\lesssim g$ and $g\lesssim f$. We write $f\ll g$ if $\lim\limits_{t\to 0^+} \frac{f(t)}{g(t)}=0$.
	
	\item $d_{inn}$, $d_{out}$: inner and outer metric (see Definition \ref{DEF: inner, outer and normally embedded}).
	
	\item LNE: Lipschitz Normally Embedded (see Definition \ref{DEF: inner, outer and normally embedded}).
	
	\item Bi-Lipschitz map, Lipeomorphism and Lipeomorphic: see Definitions \ref{Def:Lipeo}, \ref{Def:outer-inner-lipeo}, \ref{outer-lip}.
	
	\item $V(X)$: Valette link of $X$ (see Definition \ref{Def: arc}).
	
	\item $tord$ and $itord$: tangency order on outer and inner metric (see Definition \ref{DEF: order of tangency}).
	
	\item $T(\gamma_{1},\gamma_{2})$: H\"older triangle with boundary arcs $\gamma_1, \gamma_2$ (see Definitions \ref{DEF: standard Holder triangle}, \ref{DEF: Holder triangle}).
	
	\item $T(e)$: A H\"older triangle which is a geometric presentation $\psi(Ce)$ for an H\"older complex edge $e$ (see Definition \ref{Def:geometric-complex}).
	
	\item $Lsing(X)$: Lipschitz singular arcs of $X$ (see Definition \ref{Def: generic arc of a surface}).
	
	\item $G(X)$: generic arcs of $X$ (see Definition \ref{Def: generic arc of a surface}).
	
	\item $\mu(T)$: exponent of $T$ (see Definition \ref{DEF: Holder triangle}).
	
	\item $I(T)$: set of interior arcs of $T$ (see Definition \ref{DEF: Holder triangle}).
	
	\item $\Delta(\gamma,\gamma') = \bigcup_{0\le t \ll 1}[\gamma(t),\gamma'(t)]$ (see Definition \ref{Def: Delta triangles}).
	
	\item $Abn(X)$ and $Nor(X)$: abnormal and normal zones (see Definition \ref{DEF: normal and abnormal arcs and zones}).
	
	\item (Basic/Simple) Snake: see Definitions \ref{Def:snake} and \ref{Def:Simple-basic-snake}, respectively.
	
	\item $E_{Z,Z'}^\alpha$: $\alpha$-subsegment of $Z$ (see Definition \ref{Def:subsegment}).
	
	\item $m_{X,\alpha}(\gamma)$: $\alpha$-multiplicity of $\gamma$ (see Definition \ref{Def: alpha-multiplicity}).
	
	\item $Spec(\mathcal{N})$: spectrum of the node $\mathcal{N}$ (see Definition 4.31 of \cite{GabrielovSouza}).
	
	\item $\sim_b$; $b\in (0,+\infty]$: $b$-equivalence (see Definitions \ref{Def: b-equiv of simplexes}, \ref{Def: b-equiv of simplexes2}).
	
	\item $MDH_{\bullet}^{b} (X, x_0)$ and $MDH_{\bullet}^{b} (X, x_0,d,A)$: $b$-moderately discontinuous homology group of $(X,x_0)$ (see Definition \ref{Def: MDH} and Remark \ref{Rem: MDH notation}).
	
	\item $
	\mathcal{H}_{b,\eta,d_{inn}}(X), \mathcal{H}_{b,\eta}(X)$: inner $b$-horn and $b$-horn neighborhood of amplitude $\eta$ of $X$, respectively (see Definition \ref{def:b-horn}).
	
	\item $b$-map, $b$-cover, $b$-retract, $b$-contractible: see Definitions \ref{def:bmaps}, \ref{def:conicalretract}, \ref{def:b-cover}.
	
	\item (Abstract/Geometric/Canonical) H\"older complex: see Definitions \ref{Def:abstract-complex}, \ref{Def:geometric-complex}, \ref{Def:canonical-complex}.

	\item $HC_{\mathbb{F}}$: Abstract H\"older complex (see Definition \ref{Def:abstract-complex})
\end{itemize}

\section{Preliminaries}\label{sec:citacoes}


\subsection{Normally embedded sets, outer e inner lipeomorphisms.}

\begin{Def}\label{DEF: inner, outer and normally embedded}
	Given a set $X\subset \mathbb{R}^{n}$ we can define two metrics on $X$: the \textbf{outer metric} $d_{out}(x,y)=\|x-y\|$ and the \textbf{inner metric} $d_{inn}(x,y)=\inf\{l(\alpha)\}$, where $l(\alpha)$ is the length of a rectifiable path $\alpha$ from $x$ to $y$ in $X$. A set $X \subset \R^{n}$ is \textbf{Lipschitz normally embedded} (or \textbf{LNE}, for short) if the outer and inner metrics are equivalent, i.e., there is $C\ge 1$ such that $d_{inn}(x,y) \le Cd_{out}(x,y)$ for all $x,y\in X$.
\end{Def}

\begin{remark}
	 Note that the germ of a definable set $X\subset \mathbb{R}^{n}$ is always connected and then it is path-connected. Therefore, $d_{out}(x,y) \le d_{inn}(x,y)<\infty, \forall x,y\in X$.
\end{remark}

\begin{remark}\label{Rem: pancake metric}
	The inner metric is not always definable, but one can consider an equivalent definable metric as in \cite{KurdykaOrro97}, for example, the \textbf{pancake metric} defined in \cite{LBirbMosto2000NormalEmbedding}.
\end{remark}

\begin{Def}\label{Def:Lipeo}
	Given two metric spaces $(X_1, d_1)$ and $(X_2, d_2)$, we say that a homeomorphism $\varphi: X_1 \to X_2$ is \textbf{bi-Lipschitz} (with respect to the metrics $d_1$ and $d_2$) if there exists a real number $C \ge 1$ such that:
	$$\frac{1}{C} \cdot d_1(p, q) \leq d_2(\varphi(p), \varphi(q)) \leq C \cdot d_1(p, q), \quad \forall p, q \in X_1.$$
	
	For each $C \ge 1$ that satisfies such a condition, we say that the map $\varphi$ is $C$-bi-Lipschitz. Furthermore, we say that the metric spaces $(X_1, d_1)$ and $(X_2, d_2)$ are \textbf{bi-Lipschitz equivalent} (or \textbf{lipeomorphic}) if there exists a bi-Lipschitz map $\varphi: X_1 \to X_2$.
\end{Def}

\begin{Def}\label{Def:outer-inner-lipeo}
	Given two sets $X_1 \subset \mathbb{R}^{m}$, $X_2 \subset \mathbb{R}^{n}$, an \textbf{outer bi-Lipschitz map} between $X_1$ and $X_2$ (or \textbf{outer lipeomorphism}) is a map that is bi-Lipschitz with respect to the outer metrics of $X_1$ and $X_2$. An \textbf{inner bi-Lipschitz map} between $X_1$ and $X_2$ (or \textbf{inner lipeomorphism}) is a map that is bi-Lipschitz with respect to the inner metrics of $X_1$ and $X_2$.
\end{Def}

\begin{Def}\label{outer-lip}
	Given two sets $X_1 \subset \mathbb{R}^m$, $X_2 \subset \mathbb{R}^n$, we say that $X_1$ and $X_2$ are \textbf{outer bi-Lipschitz equivalent} (or \textbf{outer lipeomorphic}) if there exists an outer lipeomorphism between $X_1$ and $X_2$. We also say that $X_1$ and $X_2$ are \textbf{inner bi-Lipschitz equivalent} (or \textbf{inner lipeomorphic}) if there exists an inner lipeomorphism between $X_1$ and $X_2$.  
\end{Def}


\subsection{Arcs, tangency orders and H\"older triangles}\label{Subsec: Holder triangles}     

\begin{Def}\label{Def: arc}
	An \textbf{arc} in $\mathbb{R}^{n}$ is a germ at the origin of a mapping $\gamma \colon [0,\epsilon) \longrightarrow \mathbb{R}^{n}$ such that $\gamma(0) = 0$. Unless otherwise specified, we suppose that arcs are parameterized by the distance to the origin, i.e., $||\gamma(t)||=t$. We usually identify an arc $\gamma$ with its image ${\rm Im}(\gamma)$ in $\mathbb{R}^{n}$. For a germ at the origin of a set $X$, the set of all arcs $\gamma \subset X$ is denoted by $V(X)$ (known as the Valette link of $X$, see \cite{valette2007link}).
\end{Def}

\begin{Def}\label{DEF: order of tangency}
	Given two arcs $\gamma_1, \gamma_2 \in V(X)$, the \textbf{tangency order} of $\gamma_{1}$, $\gamma_{2}$, denoted as $tord(\gamma_{1},\gamma_{2})$, is defined as follows:
	\begin{itemize}
		\item if $\gamma_1 \ne \gamma_2$, then $tord(\gamma_{1},\gamma_{2})$ is the exponent $q \in \mathbb{F}$ where $||\gamma_{1}(t) - \gamma_{2}(t)|| = ct^{q} + o(t^{q})$ with $c\neq 0$ (such exponent $q$ always exists, see \cite{Dries:1998});
		\item if $\gamma_1 = \gamma_2$, then $tord(\gamma_{1},\gamma_{2})=\infty$.
	\end{itemize}
	For an arc $\gamma$ and a set of arcs $Z \subset V(X)$, the tangency order of $\gamma$ and $Z$, denoted as $tord(\gamma, Z)$, is the supremum of $tord(\gamma, \lambda)$ over all arcs $\lambda \in Z$. The tangency order of two sets of arcs $Z$ and $Z'$, denoted as $tord(Z,Z')$, is the supremum of $tord(\gamma, Z')$ over all arcs $\gamma \in Z$. Similarly, we define the tangency orders in the inner metric, denoted respectively by $itord(\gamma_{1},\gamma_{2}),\; itord(\gamma, Z)$ and $itord(Z,Z')$.
\end{Def}

\begin{Def}\label{DEF: standard Holder triangle}
	For $\beta \in \mathbb{F}$, $\beta \ge 1$, the \textbf{standard} $\beta$-\textbf{H\"older triangle} $T_\beta\subset\R^2$ is the germ at the origin of the set
	\begin{equation*}\label{Formula:Standard Holder triangle}
		T_\beta = \{(x,y)\in \R^2 \mid 0\le x\le 1, \; 0\le y \le x^\beta\}.
	\end{equation*}
	The curves $\{0\le x\le 1,\; y=0\}$ and $\{0\le x\le 1,\; y=x^\beta\}$ are the \textbf{boundary arcs} of $T_\beta$.
\end{Def}

\begin{Def}\label{DEF: Holder triangle}
	A germ at the origin of a set $T \subset \mathbb{R}^{n}$ that is inner lipeomorphic to the standard $\beta$-H\"older triangle $T_\beta$ is called a $\beta$-\textbf{H\"older triangle} (see \cite{birbrair1999local}).
	The number $\beta \in \mathbb{F}$ is called the \textbf{exponent} of $T$ and is denoted by $\mu(T)$. The arcs $\gamma_{1}$ and $\gamma_{2}$ of $T$ mapped to the boundary arcs of $T_\beta$ by the lipeomorphism are the \textbf{boundary arcs} of $T$. All other arcs of $T$ are \textbf{interior arcs}. The H\"older triangle is denoted by $T=T(\gamma_{1},\gamma_{2})$ and the set of interior arcs of $T$ is denoted by $I(T)$.
\end{Def}

\begin{remark}\label{Rem: NE HT condition}
	A H\"older triangle $T$ is LNE if, and only if, $tord(\gamma,\gamma')=itord(\gamma,\gamma')$ for any two arcs $\gamma$ and $\gamma'$ of $T$ (see \cite[Theorem 2.2]{birbrair2018arc}).
\end{remark}

\begin{Def}\label{DEF: Lipschitz non-singular arc}
	Let $X$ be a surface (a two-dimensional set). An arc $\gamma \in V(X)$ is \textbf{Lipschitz non-singular} if there exists a LNE H\"older triangle $T \subset X$ such that $\gamma \in I(T)$ and $\gamma \not\subset \overline{X\setminus T}$. Otherwise, $\gamma$ is \textbf{Lipschitz singular}. It follows from pancake decomposition (see Definition \ref{Def: pancake decomposition} and Remark \ref{Rem: existence of pancake decomp}) that a surface $X$ contains finitely many Lipschitz singular arcs. The union of all Lipschitz singular arcs in $X$ is denoted by $Lsing(X)$.
\end{Def}

\begin{Exam}
	Boundary arcs or arcs which are self intersection of the surface are trivial examples of Lipschitz singular arcs. An interesting example is obtained considering $T=T_1\cup T_2$, where $T_1=T(\gamma_1,\lambda)=\{(x,y,z)\mid x\ge0,0 \le y \le x^\beta,z=0\}$ and $T_2=T(\lambda,\gamma_2)=\{(x,y,z)\mid x \ge 0,\; 0 \le y \le x^\beta,\; z = x^{\alpha-\beta}y\}$ for $\gamma_1(t)=(t,t^\beta,0),\;\gamma_2(t)=(t,t^\beta,t^\alpha)$ and $\lambda(t)=(t,0,0)$, with $\alpha,\beta \in \F$ satisfying $1\le \beta <\alpha$. Any H\"older triangle containing $\lambda$ as interior arc could not be LNE, therefore $\lambda$ is Lipschitz singular (see Example 2.11 of \cite{GabrielovSouza}).
\end{Exam}

\begin{Def}\label{DEF: non-singular Holder triangle}
	A H\"older triangle $T$ is \textbf{non-singular} if all interior arcs of $T$ are Lipschitz non-singular.
\end{Def}

\begin{Def}\label{Def: generic arc of a surface}
	Let $X$ be a surface germ with connected link. The \textbf{exponent} $\mu(X)$ of $X$ is defined as $\mu(X)=\min_{\gamma, \gamma' \in V(X)}\{ itord(\gamma,\gamma')\}$.
	A surface $X$ with exponent $\beta$ is called a $\beta$-surface. An arc $\gamma \subset X\setminus Lsing(X)$ is \textbf{generic} if $itord(\gamma,\gamma') = \mu(X)$ for all arcs $\gamma'\subset Lsing(X)$. The set of generic arcs of $X$ is denoted by $G(X)$.
\end{Def}

\begin{remark}
	Probably, for many readers, the natural way of defining $\mu(X)$ would be $\mu(X)=\inf_{\gamma,\gamma' \in V(X)}\{itord(\gamma,\gamma')\}$. However, it follows from the Arc Selecion Lemma that for a fixed $\gamma \in V(X)$, we have $\inf_{\gamma' \in V(X)}\{itord(\gamma,\gamma')\} = itord(\gamma,\gamma_0)$ for some $\gamma_0 \in V(X)$.
\end{remark}

\begin{remark}\label{Rem: generic arcs of a non-singular HT}
	If $X=T(\gamma_{1},\gamma_{2})$ is a non-singular $\beta$-H\"older triangle then an arc $\gamma\subset X$ is \textbf{generic} if, and only if, $itord(\gamma_{1},\gamma) = itord(\gamma,\gamma_{2}) = \beta$.
\end{remark}

\begin{Def}\label{Def: Delta triangles}
	Let $\gamma,\gamma' \in V(\R^n)$. We define the $\alpha$-H\"older triangle $$\Delta(\gamma,\gamma') = \bigcup_{0\le t \ll 1}[\gamma(t),\gamma'(t)],$$ where $[\gamma(t),\gamma'(t)] = \{s\gamma(t) + (1-s)\gamma'(t) \in \R^n \mid s \in [0,1]\}$ and $\alpha = itord(\gamma,\gamma')$. Notice that $\Delta(\gamma,\gamma')$ is LNE and non-singular.
\end{Def}

\begin{Def}\label{Def:weakly-outer-bi-lip}
	Let $X$ and $\tilde X$ be two $\beta$-H\"older triangles and $h \colon X \rightarrow \tilde X$ a homeomorphism. We say $h$ is \textbf{weakly outer bi-Lipschitz} when $tord(h(\gamma),h(\gamma'))>\beta$ for any arcs $\gamma$ and $\gamma'$ of $X$ if, and only if, $tord(\gamma,\gamma')>\beta$. If such a homeomorphism exists, we say that $X$ and $\tilde X$ are \textbf{weakly outer Lipschitz equivalent}.
\end{Def}

\begin{remark}
	It follows from Theorem 6.28 of \cite{GabrielovSouza} that when two $\beta$-snakes $X$ and $\tilde{X}$ are weakly outer equivalent we can write the decomposition of their Valette links as segments and nodal zones, $V(X) = \bigsqcup_{I}S_i \sqcup \bigsqcup_{J}N_j$ and $V(\tilde{X}) = \bigsqcup_{I}\tilde{S}_i\sqcup\bigsqcup_{J}\tilde{N}_j$, assuming that $S_i, \tilde{S}_i$ and $N_j,\tilde{N}_j$ are correspondent for every $i\in I$ and $j\in J$, even preserving the correspondent nodes and cluster partitions of segments (see Definition 4.32 in \cite{GabrielovSouza}).   
\end{remark}


\subsection{Pancake decomposition}\label{subsection:pancake decomposition}

\begin{Def}\label{Def: pancake decomposition}
	Let $X\subset \mathbb{R}^{n}$ be the germ at the origin of a closed set. A \textbf{pancake decomposition} of $X$ is a finite collection of closed LNE subsets $X_{k}$ of $X$ with connected links, called \textbf{pancakes}, such that $X=\bigcup_{k \in I} X_{k}$ and $$\textrm{dim}(X_{j}\cap X_{k}) < \textrm{min}(\textrm{dim}(X_{j}),\textrm{dim}(X_{k}))\quad\text{for all}\; j,k \in I.$$
\end{Def}

\begin{remark}\label{Rem: existence of pancake decomp}
	The term ``pancake'' was introduced in \cite{LBirbMosto2000NormalEmbedding}, but this notion first appeared (with a different name) in \cite{Kurdyka92} and \cite{KurdykaOrro97}, where the existence of such decomposition was established.
\end{remark}

\begin{remark}\label{Rem:pancake of holder triangle is holder triangle}
	If $X$ is a H\"older triangle then each pancake $X_k$ is also a H\"older triangle.
\end{remark}

\begin{Def}\label{Def: minimal pancak decomp}
	A pancake decomposition $\{X_{k}\}$ of a set $X$ is \textbf{reduced} if the union of any two adjacent pancakes $X_{j}$ and $X_{k}$ (such that $X_{j}\cap X_{k}\ne \{0\}$) is not LNE.
\end{Def}

\begin{remark}
	When the union of two adjacent pancakes is LNE, they can be replaced by their union, reducing the number of pancakes. Thus, a reduced pancake decomposition always exists.
\end{remark}

\subsection{Abnormal surfaces}
Some of the definitions below were first introduced in \cite{LevMendes2018}.

\begin{Def}\label{Def: zone}
	A nonempty set of arcs $Z \subset V(X)$ is a \textbf{zone} if, for any two distinct arcs $\gamma_{1}$ and $\gamma_{2}$ in $Z$, there exists a non-singular H\"older triangle $T=T(\gamma_{1},\gamma_{2}) \subset X$ such that $V(T) \subset Z$. If $Z = \{\gamma\}$ then $Z$ is a \textbf{singular zone}.
\end{Def}

\begin{Def}\label{Def: maximal zone in}
	Let $B \subset V(X)$ be a nonempty set. A zone $Z\subset B$ is \textbf{maximal in} $B$ if, for any H\"older triangle $T$ such that $V(T) \subset B$, one has either $Z\cap V(T)=\emptyset$ or $V(T) \subset Z$.
\end{Def}

\begin{remark}
	A zone could be understood as an analog of a connected subset of $V(X)$, and a maximal zone in a set $B$ is an analog of a connected component of $B$.
\end{remark}

\begin{Def}\label{Def:order of zone}
	The \textbf{order} $\mu(Z)$ of a zone $Z$ is defined as the infimum of $tord(\gamma,\gamma')$ over all arcs $\gamma$ and $\gamma'$ in $Z$. If $Z$ is a singular zone then we define $\mu(Z) = \infty$. A zone $Z$ of order $\beta$ is called a $\beta$-zone.
\end{Def}

\begin{remark}
	The tangency order can be replaced by the inner tangency order in Definition \ref{Def:order of zone}. Note that, for any arc $\gamma \in Z$, $\inf_{\gamma'\in Z}tord(\gamma,\gamma')=\inf_{\gamma'\in Z}itord(\gamma,\gamma')=\mu(Z)$. Moreover, differently from Definition \ref{Def: generic arc of a surface}, the order of a zone could not be a minimum (for a example, see Example 2.46 of \cite{GabrielovSouza}).
\end{remark}

\begin{Def}\label{Def: LNE zone}
	A zone $Z$ is LNE if, for any two arcs $\gamma$ and $\gamma'$ in $Z$, there exists a LNE H\"older triangle $T=T(\gamma,\gamma')$ such that $V(T)\subset Z$.
\end{Def}

\begin{Def}\label{DEF: normal and abnormal arcs and zones}
	A Lipschitz non-singular arc $\gamma$ of a surface germ $X$ is \textbf{abnormal} if there are two LNE H\"older triangles $T\subset X$ and $T'\subset X$ such that $T\cap T' = \gamma$ and $T\cup T'$ is not LNE.
	Otherwise $\gamma$ is \textbf{normal}. A zone is \textbf{abnormal} (resp., \textbf{normal}) if all of its arcs are abnormal (resp., normal). The sets of abnormal and normal arcs of $X$ are denoted by $Abn(X)$ and $Nor(X)$, respectively. A surface germ $X$ is called \textbf{abnormal} if $Abn(X)=G(X)$.
\end{Def}

\begin{Def}\label{Def: maximal abnormal and normal zones}
	Given an abnormal (resp., normal) arc $\gamma \subset X$, the \textbf{maximal abnormal zone} (resp., \textbf{maximal normal zone}) in $V(X)$ containing $\gamma$ is the union of all abnormal (resp., normal) zones in $V(X)$ containing $\gamma$. Alternatively, the maximal abnormal (resp., normal) zone containing $\gamma$ is a maximal zone in $Abn(X)$ (resp., $Nor(X)$) containing $\gamma$.
\end{Def}

\begin{remark}\label{max zones are unique}
	It follows from Definition \ref{DEF: normal and abnormal arcs and zones} that the property of an arc to be abnormal (resp., normal)
	is an outer bi-Lipschitz invariant: if $h:X\to X'$
	is an outer lipeomorphism then $h(\gamma)\subset X'$ is an abnormal (resp., normal) arc for any abnormal (resp., normal) arc $\gamma\subset X$.
	Since the property of an arc to be abnormal (resp., normal) is outer Lipschitz invariant,
	maximal abnormal (resp., normal) zones in $V(X)$ are also outer Lipschitz invariant: if $h:X\to X'$
	is an outer lipeomorphism then $h(Z)\subset V(X')$ is a maximal abnormal (resp., normal) zone for any maximal abnormal (resp., normal) zone $Z\subset V(X)$. Here $h:V(X)\to V(X')$ is the natural action of $h$ on arcs in $X$. 
\end{remark}

\begin{Def}\label{Def:snake}
	A non-singular $\beta$-H\"older triangle $T$ is called a $\beta$\textbf{-snake} if $T$ is an abnormal surface (see Definition \ref{DEF: normal and abnormal arcs and zones}).
\end{Def}

\begin{remark}
	It was proved in \cite{GabrielovSouza} that the Valette link of a $\beta$-snake $X$ can be decomposed into finitely many disjoint zones called \textbf{segments} and \textbf{nodal zones} (see Definition 4.19). Those zones are LNE, when the $\beta$-snake is not a spiral snake (see Proposition 4.55). Using this decomposition, Gabrielov and Souza created a combinatorial object, $W(X)$, associated with $X$. More specifically, $W(X)$ is a word, in some alphabet, say $\{x_1, x_2,\ldots \}$, satisfying two conditions (see Definition 6.6). Any word satisfying those conditions a \textbf{snake name} and they proved that, for any snake name $W$ with length $m>3$, there is a snake $X$ in $\R^{2m-1}$ such that $W = W(X)$ (see Theorem 6.23).
\end{remark}

\begin{Exam}
	In this example we illustrate how to associate a word $W(X)$ with a snake $X = T(\gamma_{1},\gamma_{2})$. Let $X$ be a snake with link as in Figure \ref{Fig. snake word}. Recall that a \textbf{node} of $X$ is the union of nodal zones with tangency orders higher than $\mu(X)$ (see Definition 4.31 of \cite{GabrielovSouza}). First, we choose an orientation for $X$, say from $\gamma_{1}$ to $\gamma_{2}$. Second, we assign letters to the nodes by moving through the link of $X$, respecting this orientation, in a way the first node encountered is assigned to the first letter of the alphabet and so on, skipping the nodes already assigned. Finally, we obtain the word associated with $X$ by traversing the link again, accordingly to the orientation, adding a letter every time we pass through a node. In this case, in the alphabet $\{x_1, x_2,\ldots \}$, we have $W(X) = [x_1x_2x_1x_3x_2x_3]$.
\end{Exam}

\begin{figure}[h!]
	\centering
	\includegraphics[width=2in]{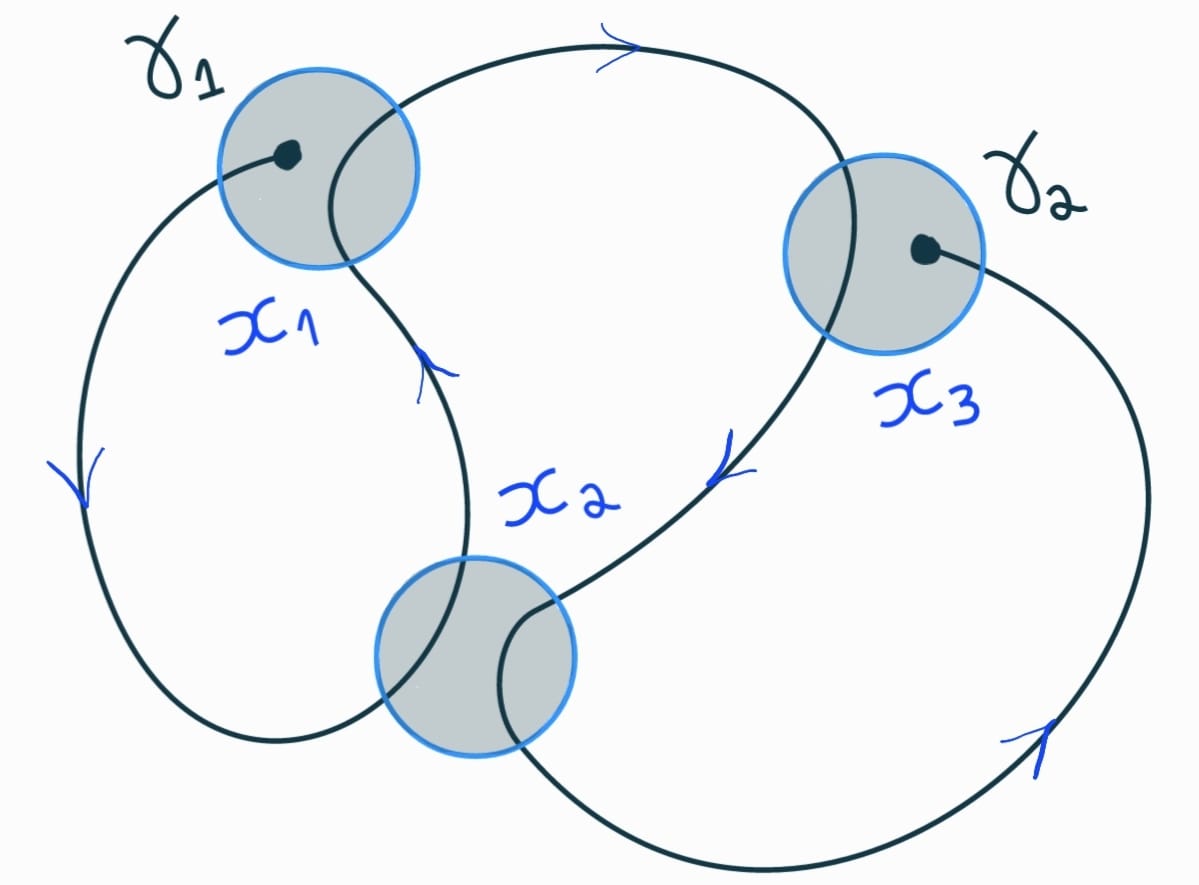}
	\caption{Example of a snake oriented from $\gamma_{1}$ to $\gamma_{2}$ with the letters $x_1$, $x_2$ and $x_3$ assigned to its three nodes, each with two nodal zones each. Points inside the shaded disks represent arcs with tangency order higher than $\beta$.}\label{Fig. snake word}
\end{figure}

\begin{Exam}\label{Exam: realization for snake names}
	Let us present a concrete example for the construction in Theorem 6.23 of \cite{GabrielovSouza}. This understanding will be fundamental for the proof of Theorem \ref{Teo: realization for MD-Homology} below. Let us start with a brief recall of the main objects in the construction. For a snake name $W=[w_1\cdots w_m]$ and exponents $\alpha > \beta \ge 1$, consider the arcs $\delta_1, \ldots, \delta_m$ and $\sigma_1,\ldots,\sigma_{m-1}$ in $\R^{2m-1} = \langle e_1,\ldots,e_{2m-1}\rangle$ defined by $\delta_1(t) = te_1$ and, for $j>1$, $\delta_j(t) = \delta_1(t) + t^\beta e_j$ if $w_i \neq w_j$, for all $i < j$, and $\delta_j(t) = \delta_{r(j)} + t^\alpha e_j$ otherwise, where $r(j) = \min\{ i < j \mid w_i = w_j\}$. Nevertheless, $\sigma_j(t) = \delta_1(t) + t^\beta e_{m + j}$ for $1\le j \le m-1$. Finally, consider $X = \bigcup_{j=1}^{m-1}T_j$, where $T_j = \Delta(\delta_j,\sigma_j)\cup \Delta(\sigma_j,\delta_{j+1})$ (see Definition \ref{Def: Delta triangles}). We have that $X$ is a $\beta$-snake and $W(X) = W$. In Figure \ref{Fig. snake realization}, black line, we have the geometric scheme of this procedure where $W = [x_1x_2x_1x_3x_2x_3]$. 
\end{Exam}

\begin{figure}[h!]
	\centering
	\includegraphics[width=2in]{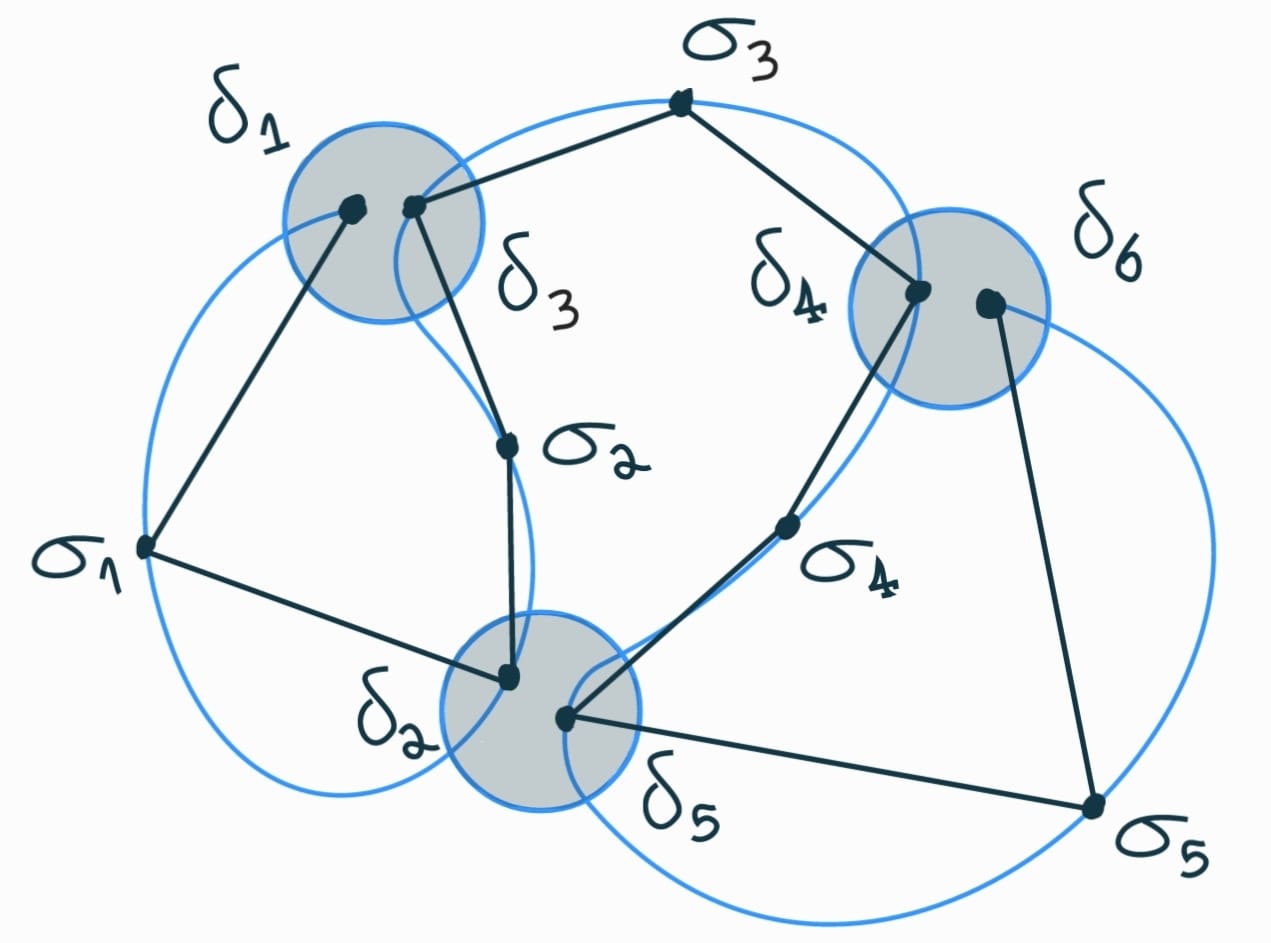}
	\caption{Link of a snake $X = \bigcup_{j=1}^{5}T_j \subset \R^{11}$ (black line) such that $W(X) = [x_1x_2x_1x_3x_2x_3]$. In blue we have the link of other snake which has the same word associated with it. Points inside the shaded disks represent arcs with tangency order higher than $\beta$.}\label{Fig. snake realization}
\end{figure}

\begin{remark}
	Despite the word associated with a snake is an interesting Lipschitz invariant combinatorial object, it ignores many geometric properties of a snake, for example, the \textbf{spectra of its nodes} (see Definition 4.31 of \cite{GabrielovSouza}).
\end{remark}

\subsection{H\"older Complex and Inner Classification of Surfaces}

\begin{Def}\label{Def:abstract-complex}
	Let $G$ a finite graph without loops (by a loop we mean an edge connecting one vertex to itself), $V(G)$ the set of its vertices and $E(G)$ the set of its edges. An {\bf abstract H\"older complex in $\mathbb{F}$} is a pair $(G, \sigma)$, where $\sigma: E(G) \to \mathbb{F}$ is a function. The set of all such abstract H\"older complexes is denoted by $HC_{\mathbb{F}}$. 
	
	Two H\"older complexes $(G_1, \sigma_1), (G_2, \sigma_2) \in HC_{\mathbb{F}}$ are \emph{isomorphic} if there is a graph isomorphism $i: G_1 \to G_2$ such that, for every $g \in E(G)$, we have $\sigma_2(i(g))=\sigma_1(g)$.
\end{Def}

\begin{Def}\label{Def:geometric-complex}
	Let $(G,\sigma) \in HC_{\mathbb{F}}$ be a H\"older complex and $X \subset \mathbb{R}^n$. For each set $S$, let $CS$ be the topological cone over $S$. We say that $X$ is a {\bf geometric H\"older complex} corresponding to $(G,\sigma)$ if the following holds:
	\begin{enumerate}
		\item there is a homeomorphism $\psi: CG \to X$ such that $\psi(a)=0$, where $a$ is the vertex of $CG$;
		\item for each $e\in E(G)$, with vertices $u$ and $v$, the set $T(e)=\psi(Ce)$ is a $\sigma(e)$-H\"older triangle with boundary arcs $\gamma_0=\psi(Cu)$ and $\gamma_1=\psi(Cv)$. Here, $Ce, Cu, Cv \subset CG$ are the subcones over $e$, $u$, $v$, respectively.
	\end{enumerate}
	The map $\psi$ is a {\bf presentation map} for $(G,\sigma)$.
\end{Def}

\begin{remark}\label{Rem:realization-complex}
	For every H\"older complex $(G, \sigma) \in HC_{\Q_{\ge 1}}$, there is $n \in \mathbb{N}$ and a closed 2-dimensional semialgebraic set germ $X \subset \R^n$ such that $X$ is a geometric H\"older complex corresponding to $(G, \sigma)$ (see Realization Theorem in \cite{Birbrair:1999realization}).
\end{remark}

\begin{Def}
	Let $G$ be a graph without loops and isolated vertices. Given $v \in V(G)$, we say that $v$ is a:
	\begin{itemize}
		\item {\bf loop vertex of $G$} if $v$ is incident with exactly two edges $e_1, e_2 \in E(G)$, with $e_1, e_2$ incident to a vertex $v_0 \in V(G)$, $v_0 \ne v$.
		\item {\bf non-critical vertex of $G$} if $v$ is incident with exactly two edges $\overline{vv_1},\overline{vv_2} \in E(G)$, with $v_1, v_2 \in V(G)$, $v_1 \ne v_2$.
		\item {\bf critical vertex of $G$} if $v$ is incident with at least three edges.
	\end{itemize}
\end{Def}

\begin{Def}
	Let $G$ be a graph with neither loops nor isolated vertices. Given a non-critical vertex $v \in V(G)$, a {\bf surgery in $v$} is the graph $\Omega_v(G)$ obtained from $G$ by removing $v$ and its two incidents edges $\overline{vv_1},\overline{vv_2}$, and connecting $v_1, v_2$ with a edge $e$. We say that $G$ is a {\bf simplified graph} if there are only critical and loop vertices in $V(G)$. We also say that the simplified graph $\tilde{G}$ is a {\bf simplification of $G$} if $\tilde{G}$ is a simplified graph and $\tilde{G}$ can be obtained from $G$ by applying a finite number (possibly none) of surgeries.
\end{Def}

\begin{Def}
	Let $(G, \sigma)$ be a H\"older Complex. Given $v \in V(G)$, consider the two following operations:
	\begin{itemize}
		\item {\bf Operation $\Omega_v$.} Let $v$ be a non-critical vertex of $G$ and $\Omega_v(G)$ a surgery in $v$, with the edges $e_1, e_2$ replaced by $e$. The operation $\Omega_v$ takes $(G, \sigma)$ to the H\"older complex $(\Omega_v(G), \sigma_v)$, where $\sigma_v(a)=\sigma(a)$ if $a \ne e$, and $\sigma_v(e)=\min \{\sigma(e_1), \sigma(e_2)\}$.
		\item {\bf Operation $\Delta_v$.} Let $v$ be a loop vertex of $G$ and $e_1, e_2 \in E(G)$ be the edges incident in $v$. The operation $\Delta_v$ takes $(G, \sigma)$ to the H\"older complex $(G, \sigma_v)$, where $\sigma_v(a)=\sigma(a)$ if $a \ne e_1, e_2$, and $\sigma_v(e_1)=\sigma_v(e_2)=\min \{\sigma(e_1), \sigma(e_2)\}$.
	\end{itemize}
	
	We say that $(G, \sigma)$ is a {\bf simplified H\"older complex} if the operations $\Omega_v$ and $\Delta_v$ cannot be applied to $(G, \sigma)$ (in particular, $G$ is a simplified graph). We also say that the simplified H\"older complex $(\tilde{G}, \tilde{\sigma})$ is a {\bf simplification of $(G, \sigma)$} if $(\tilde{G}, \tilde{\sigma})$ can be obtained by $(G, \sigma)$ by applying the operations $\Omega_v$ or $\Delta_v$ a finite number (possibly none) of times.
\end{Def}

\begin{remark}
	Every finite graph has a simplification, and the simplification is unique up to isomorphisms. Moreover, if $G_1$ and $G_2$ are two graphs homeomorphic as topological spaces in the natural graph topology, then the simplifications of $G_1$ and $G_2$ are isomorphic (see \cite{birbrair1999local}, Theorem 7.2). In the same line, every abstract H\"older complex has a simplification, and the simplification is unique up to isomorphisms (see \cite{birbrair1999local}, Proposition 7.3). Moreover, if $X$ is a geometric H\"older complex corresponding to $(G,\sigma)$, and if $(\tilde{G}, \tilde{\sigma})$ is the simplification of $(G, \sigma)$, then $X$ is a geometric H\"older complex corresponding to $(\tilde{G}, \tilde{\sigma})$ (see \cite{birbrair1999local}, Lemma 8.1).
\end{remark}

\begin{Def}\label{Def:canonical-complex}
	Let $X \subset \R^n$ be a surface, and suppose $X$ is the geometric H\"older complex corresponding to $(G,\sigma)$. If $(\tilde{G}, \tilde{\sigma})$ is the simplification of $(G, \sigma)$, the {\bf canonical H\"older complex of $X$} is defined as the H\"older complex $(\tilde{G}, \tilde{\sigma})$.
\end{Def}

\begin{Teo}[Theorem 8.2 in \cite{birbrair1999local}]\label{teo:inner-equiv-local}
	Let $X\in \R^n$, $Y \in\R^m$ be two closed surfaces. Then, $X$ and $X$ are inner lipeomorphic if, and only if, the canonical H\"older complexes of $X$ and $Y$ are isomorphic.
\end{Teo}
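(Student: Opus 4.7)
The plan is to prove the two directions by exploiting the fact, highlighted in the remark preceding the theorem, that any geometric realization of an abstract H\"older complex is also a geometric realization of its simplification, and that the simplification is unique up to isomorphism.

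For the sufficiency direction, suppose the canonical H\"older complexes of $X$ and $Y$ are both isomorphic to some $(\tilde G, \tilde \sigma)$. By the remark, both $X$ and $Y$ admit presentation maps $\psi_X, \psi_Y \colon C\tilde G \to X, Y$ realizing the complex $(\tilde G, \tilde \sigma)$. I would build the inner lipeomorphism $h\colon X \to Y$ edge by edge: for each $e \in E(\tilde G)$, the sets $\psi_X(Ce)$ and $\psi_Y(Ce)$ are both $\tilde\sigma(e)$-H\"older triangles, hence each is inner lipeomorphic to the standard triangle $T_{\tilde\sigma(e)}$ by Definition \ref{DEF: Holder triangle}, and therefore to each other. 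The delicate point is compatibility at vertices: for each $v \in V(\tilde G)$, all edges incident to $v$ must produce the same lipeomorphism on the arc $\psi_X(Cv) \to \psi_Y(Cv)$. I would first fix, for each vertex $v$, one inner lipeomorphism between the arcs $\psi_X(Cv)$ and $\psi_Y(Cv)$ (both parameterized by distance to the origin), and then extend these prescribed boundary maps to each triangle separately.

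For the necessity direction, suppose $h \colon X \to Y$ is an inner lipeomorphism. Choose any abstract H\"older complex $(G, \sigma)$ for which $X$ admits a presentation map $\psi \colon CG \to X$. I claim that $h \circ \psi \colon CG \to Y$ is a presentation map for the same abstract complex on $Y$. Indeed, for every edge $e$, the set $h(\psi(Ce))$ is the inner-lipeomorphic image of a $\sigma(e)$-H\"older triangle, so again a $\sigma(e)$-H\"older triangle (its exponent being the inner-metric invariant $\mu$ of Definition \ref{DEF: Holder triangle}), with the correct boundary arcs $h(\psi(Cu)), h(\psi(Cv))$. Thus $Y$ realizes the same $(G, \sigma)$, and by the uniqueness-of-simplification remark the canonical complexes of $X$ and $Y$ agree up to isomorphism.

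The step I expect to be the main obstacle is the compatible-extension problem in the sufficiency direction: given any inner lipeomorphism prescribed on the two boundary arcs of $T_\beta$ (viewed as intervals parameterized by distance to the origin), one must extend it to an inner lipeomorphism of the entire triangle. The natural strategy is to exploit the product-like description $T_\beta = \{(x,y) : 0\le x \le 1, 0\le y\le x^\beta\}$, reducing the problem to an extension in the second coordinate that is affine on each vertical fiber and is controlled so that the resulting map is bi-Lipschitz with respect to the inner metric. Once this extension lemma is in place, the triangle-by-triangle lipeomorphisms glue to produce the desired global inner lipeomorphism $X \to Y$.
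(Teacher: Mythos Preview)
The paper does not prove this theorem at all: it is stated as Theorem 8.2 of \cite{birbrair1999local} and simply quoted as background. So there is no ``paper's own proof'' to compare against; the relevant proof lives in Birbrair's original article.

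That said, your outline is essentially the strategy used in \cite{birbrair1999local}. The necessity direction is exactly as you wrote: push the presentation map forward by the inner lipeomorphism and invoke uniqueness of simplification. For sufficiency, Birbrair also works edge by edge, first normalizing the boundary arcs and then extending across each triangle; the extension step you flag is indeed the substantive one, and it is handled there via an explicit bi-Lipschitz model of the standard $\beta$-triangle that allows prescribed (bi-Lipschitz) boundary data. Your fiberwise-affine idea on $T_\beta$ is the right intuition, but note that the boundary lipeomorphisms you fix on $\psi_X(Cv)\to\psi_Y(Cv)$ need not respect the ``distance to origin'' parameter exactly (only up to a bi-Lipschitz reparametrization), so you must check that the induced vertical-fiber interpolation remains bi-Lipschitz for the inner metric of $T_\beta$; this is where one uses that the exponent $\beta\ge 1$ and the triangle is LNE. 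Once that lemma is in place, the gluing is routine.
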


\subsection{Moderately discontinuous homology}\label{subsec:mdh}
Since the moderately discontinuous homology is a very recent theory, we briefly recall the main definitions of it. For more details about this theory, we refer the reader to \cite{MDH2022}. The theory works for a large class of distances, but here we only consider the outer and inner metrics.

For $n \in \Z_{\ge 0}$, let $\Delta_n \subset \mathbb R^{n+1}$ be the \textbf{standard $n$-simplex}, i.e, $$\Delta_n :=\left\{(p_0, \ldots, p_n) \in \mathbb R_{\geq 0}^{n+1}: \sum_{i=0}^n p_i = 1\right\},$$
with the orientation induced by the standard orientation of the convex hull of $\Delta_n \cup \{0\} \subset \mathbb R^{n+1}$. For $0\leq k \leq n$, define $i_n^k: \Delta_{n-1} \to \Delta_n$ as $$i_n^k(p_0, \ldots, p_{n-1}) = (p_0, \ldots, p_{k-1}, 0, p_k, \ldots, p_{n-1}).$$ Set 
$$\hat{\Delta}_n:=\{(tx, t) \in \mathbb R^{n+1}\times \mathbb R: x \in \Delta_n, t \in [0,1)\}$$ and let $\hat{j}_n^k: \hat{\Delta}_{n-1} \to \hat{\Delta}_n, (tx,t) \mapsto (ti_n^k(x),t)$. We identify $\hat{\Delta}_n$ with its germ at $(0, 0)$.

\begin{Def}  Given a definable germ $(X, x_0) \subset (\R^n,x_0)$ with a metric $d$, a \textbf{linearly vertex approaching $n$-simplex} (l.v.a. $n$-simplex) in $(X, x_0)$ is a subanalytic continuous map germ $\sigma: \hat{\Delta}_n \to (X, x_0)$ such that there is $K \geq 1$ satisfying
$$ \frac{t}{K} \leq \|\sigma(tx, t) - x_0\| \leq Kt, \, \forall (x,t) \in \hat{\Delta}_n.$$

A \textbf{linear vertex approaching $n$-chain} in $(X,x_0)$ is a finite formal sum $\sum_{i\in I} a_i \sigma_i$ where $a_i \in \mathbb{Z}$ and $\sigma_i$ is a l.v.a. $n$-simplex in $(X, x_0)$. Denote by $MDC^{pre}_n (X, x_0)$ the abelian group of $n$-chains. The \textbf{boundary of $\sigma$} is a formal sum of $(n-1)$-simplices defined as
$$\partial \sigma : = \sum_{k=0}^n (-1)^k \sigma \circ \hat{j}_n^k.$$
\end{Def}

\begin{Def} 
A \textbf{homological subdivision of $\hat{\Delta}_n$} is a finite collection $\{\rho_i\}_{i\in I}$ of injective l.v.a. map germs $\rho_i: \hat{\Delta}_n \to \hat{\Delta}_n$ such that there is a subanalytic triangulation $\alpha: |K| \to \hat{\Delta}_n$ with the following properties
\begin{itemize}
    \item $\alpha$ is compatible with faces of $\hat{\Delta}_n$;
    \item the collection $\{T_i\}$ of maximal triangles of $\alpha$ is also indexed by $I$;
    \item for any $i \in I$, $\rho_i(\hat{\Delta}_n) = T_i$ and the map $\alpha^{-1}\circ\rho_i$ takes faces of $\hat{\Delta}_n$ to faces of $K$.
\end{itemize}
The \textbf{sign of $\rho_i$}, denoted by $sgn(\rho_i)$, is defined to be $1$ if $\rho_i$ is orientation preserving, and $-1$ if it is of the opposite orientation.
\end{Def}

\begin{Def}\label{Def: b-equiv of simplexes}  
	Let $b \in (0, +\infty]$. Two $n$-simplices $\sigma_1$ and $\sigma_2$ in $MDC_n^{pre}(X, x_0)$ are called \textbf{$b$-equivalent} if 
\begin{itemize}
\item for $b < \infty$:  
$$\lim_{t\to 0} \frac{\max \{d(\sigma_1(tx,t), \sigma_2(tx,t)) \, : \, x\in \Delta_n\}}{t^b}=0.$$

\item for $b = \infty$: $\sigma_1(tx,t)= \sigma_2(tx,t), \forall x \in \Delta_n$.
\end{itemize}
The $b$-equivalence between $\sigma_1$ and $\sigma_2$ is denoted by $\sigma_1 \sim_b \sigma_2$
\end{Def}

\begin{remark}\label{Rem: b-equiv and tord of arcs}
	An arc, as in Definition \ref{Def: arc}, could be seen as a $0$-simplex. Therefore, it follows from Definition \ref{Def: b-equiv of simplexes} that two arcs $\gamma$ and $\gamma'$ are $b$-equivalent if, and only if, $tord(\gamma,\gamma')>b$. Consequently, arcs in a same node of a snake $X$ are $b$-equivalent for $b\le \mu(X)$.
\end{remark}

\begin{Def}\label{Def: b-equiv of simplexes2}   Let $b \in (0, +\infty]$.
Let  $z = \sum_{i\in I} a_i \sigma_i$ and $z' = \sum_{j\in J} b_j \tau_j$  be l.v.a. $n$-complex chains in $MDC_n^{pre}(X, x_0)$. Write $I = \bigsqcup_{k\in K}I_k$ and $J = \bigsqcup_{k\in K}J_k$ where

(a) $i_1, i_2 \in I$ belong to the same $I_k$ iff $\sigma_{i_1}\sim_b \sigma_{i_2}$.
    
(b) $j_1, j_2 \in J$ belong to the same $J_k$ iff $\tau_{j_1}\sim_b \tau_{j_2}$.

(c) for $k \in K$, $i \in I_k$, $j \in J_k$ we have $\sigma_i \sim_b \tau_j$.

Then, $z$ is called {\bf $b$-equivalent} to $z'$, denoted by $z \sim_b z'$, if for any $k$, 
$$ \sum_{i\in I_k} a_i = \sum_{j\in J_k} b_j.$$
\end{Def}



\begin{Def}\label{Def: MDH}
Let $b \in (0, +\infty]$. The \textbf{$b$-moderately discontinuous chain complex of} $(X, x_0)$ is the quotient group $MDC_{\bullet}^{b}(X, x_0) := MDC_{\bullet}^{pre}(X, x_0)/ \sim_{b}$.  Its homology is called \textbf{$b$-moderately discontinuous homology} (or \textbf{$b$-MD-Homology}, for short), denoted by $MDH_{\bullet}^{b} (X, x_0)$.
\end{Def}

\begin{remark}\label{Rem: MDH notation}
	When we want to emphasize the metric $d$ and the abelian group $A$ we write $MDH_{\bullet}^{b} (X, x_0,d,A)$ instead of $MDH_{\bullet}^{b} (X, x_0)$.
\end{remark}

\begin{Def}
	\label{def:b-horn}
	Let $X \subset \R^n$ be a subanalytic germ with metric $d$. Let $b \in (0, +\infty)$. The {\bf inner $b$-horn neighborhood of amplitude $\eta$ of $X$} is the subset 
	$$
	\mathcal{H}_{b,\eta,d_{inn}}(X):=\bigcup_{x\in X}\{z\in \R^n;d_{inn}(z,x)<\eta \|x\|^b\}.
	$$
	The {\bf $b$-horn neighborhood of amplitude $\eta$ of $X$} is the subset 
	$$
	\mathcal{H}_{b,\eta}(X):=\bigcup_{x\in X}\{z\in \R^n;\|x-z\|<\eta \|x\|^b\}.
	$$
	The $\infty$-horn neighborhood $\mathcal{H}_{\infty,\eta}(X)$ is defined to be $X$.  
\end{Def}

\begin{Def}
	\label{Def:jumpingrate}
	Let $(X,x_0)$ be a subanalytic germ. An element $b\in [1,\infty)$ is called a {\bf jumping rate} of $(X,x_0)$ if for any small enough $\varepsilon>0$ the natural homomorphism
	$$h^{b+\varepsilon, b-\varepsilon}_*: MDH^{b+\varepsilon}_*(X,x_0)\to MDH^{b-\varepsilon}_*(X,x_0)$$
	is not an isomorphism.
\end{Def}

We have the following interesting results relating moderately discontinuous homology with the singular homology:

\begin{Teo}[Theorem 10.1 in \cite{MDH2022}]
\label{Teo:link_hoologia_singular}
We have an isomorphism between the singular homology  $H_{*}(X\setminus\{x_0\},x_0)$ and $MDH^{\infty}_{*}(X,x_0)$.
\end{Teo}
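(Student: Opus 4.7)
The plan is to exhibit a natural chain map between $MDC^{\infty}_\bullet(X,x_0)$ and the singular chain complex of $X\setminus\{x_0\}$ and prove it is a quasi-isomorphism. First I would observe that for $b=\infty$ the equivalence $\sim_\infty$ collapses to literal equality of maps on $\hat{\Delta}_n$, so $MDC^{\infty}_n(X,x_0)=MDC^{pre}_n(X,x_0)$ is the free abelian group on subanalytic l.v.a. $n$-simplices. Fix a small $t_0\in(0,1)$ and define
\[
\Phi_n\colon MDC^{\infty}_n(X,x_0)\to C_n^{sing}(X\setminus\{x_0\}),\qquad \Phi_n(\sigma)(x):=\sigma(t_0 x,t_0).
\]
The l.v.a. inequality $\|\sigma(t_0x,t_0)-x_0\|\ge t_0/K>0$ guarantees the image avoids $x_0$, and the compatibility $\sigma\circ \hat{j}_n^k$ restricted at $t=t_0$ equals $\Phi_{n-1}(\sigma)\circ i_n^k$, making $\Phi_\bullet$ a chain map.

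Next I would check independence of $t_0$: for $0<t_0<t_1$, the restriction of $\sigma$ itself to the strip $\Delta_n\times[t_0,t_1]$ provides an explicit singular prism homotopy between the two slices, so $\Phi_*$ on homology does not depend on $t_0$. To prove surjectivity of $\Phi_*$, I would use the local conical structure of subanalytic (more generally, polynomially bounded o-minimal) germs: shrinking the representative one has a subanalytic trivialization $(X\cap B_\varepsilon(x_0))\setminus\{x_0\}\simeq \mathrm{Lk}(X,x_0)\times(0,\varepsilon)$. Given a singular cycle in $X\setminus\{x_0\}$, a homotopy pushes it into a small punctured neighborhood where, after subanalytic triangulation and a standard subanalytic approximation for singular chains, it becomes a subanalytic chain; the trivialization then allows to radially cone it to produce an l.v.a. chain whose slice at $t_0$ is precisely the given cycle.

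Injectivity follows by the same coning device: if $\Phi_n(z)=\partial c$ for some singular $c$, realize $c$ as the slice of an l.v.a. chain $\tilde c$ via the same extension, so that $z-\partial\tilde c$ is an l.v.a. cycle with empty slice at $t_0$; using the l.v.a. structure one can extend it to a cone on $\hat{\Delta}_{n+1}$ that bounds it in $MDC^{\infty}_\bullet$. The main obstacle is this subanalytic realization argument in Steps 3 and 4 — namely, promoting a purely topological singular cycle into a subanalytic l.v.a. chain without losing the homology class. This depends in an essential way on Hardt-type local trivializations in the polynomially bounded o-minimal setting and on the fact that subanalytic simplices are cofinal among singular simplices up to chain homotopy; the remaining verifications (functoriality of $\Phi$, chain-homotopy formula for the prism, coning at $x_0$) are of a routine algebraic-topological nature.
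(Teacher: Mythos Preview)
The paper under review does not prove this statement; it is quoted verbatim as Theorem~10.1 of \cite{MDH2022} and invoked as an imported result. There is therefore no proof in this paper to compare your proposal against.

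On the substance of your outline: the strategy is reasonable and is in the spirit of how such comparison results are typically established. For $b=\infty$ the relation $\sim_\infty$ is literal equality, and the slice map $\Phi$ you define is a natural chain map into the singular complex of the punctured germ; the prism argument for independence of $t_0$ is standard, and the use of local conical structure (Hardt-type triviality in the o-minimal setting) to cone singular cycles into l.v.a. chains is the right geometric input. Two cautions. First, in the full construction of \cite{MDH2022} the MD chain complex also carries a quotient by homological subdivisions (the brief recap in this paper suppresses that step), so $MDC^{\infty}_n$ is not literally the free abelian group on l.v.a. simplices; this does not damage your argument but must be tracked. Second, the step you yourself flag as the main obstacle --- promoting an arbitrary singular cycle to a subanalytic l.v.a. chain in the same homology class --- is the entire content of the theorem and is not routine; it needs subanalytic triangulation of the germ together with the standard fact that singular and simplicial homology of the triangulated link agree, rather than an ad hoc approximation. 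Your sketch gestures at this but does not carry it out; until that step is made precise, the proposal is an outline rather than a proof.
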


\begin{Teo}[Corollary 11.12 in \cite{MDH2022}]
	\label{cor:birbrair}
	Let $X$ be a subanalytic germ at $0$ in $\R^n$. Let $b\in(0,+\infty)$ and $\eta>0$. There exists a $b'$ satisfying $b<b'$ such that the $b$-MD homology $MDH^{b}_\bullet(X,0)$ is isomorphic, for any $b''\in (b,b')$, to the singular homology of the punctured $b''$-horn neighborhood $\mathcal{H}_{b'',\eta}(X)\setminus\{0\}$. 
\end{Teo}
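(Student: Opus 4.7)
The plan is to factor the claimed comparison through two intermediate isomorphisms. First, Theorem \ref{Teo:link_hoologia_singular} applied to the germ $(\mathcal{H}_{b'',\eta}(X),0)$ identifies $H_\bullet(\mathcal{H}_{b'',\eta}(X)\setminus\{0\})$ with $MDH_\bullet^{\infty}(\mathcal{H}_{b'',\eta}(X),0)$. Second, the general theory of \cite{MDH2022} provides that the set of jumping rates of $(X,0)$ in the sense of Definition \ref{Def:jumpingrate} is discrete in $[1,+\infty)$, so $b'$ can be chosen so that the interval $(b,b')$ is free of jumping rates. Under this choice, for every $b''\in(b,b')$ the natural comparison $MDH_\bullet^{b''}(X,0)\to MDH_\bullet^{b}(X,0)$ is an isomorphism, and the theorem reduces to exhibiting an isomorphism
\[
MDH_\bullet^{b''}(X,0)\;\cong\; MDH_\bullet^{\infty}(\mathcal{H}_{b'',\eta}(X),0).
\]

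The geometric heart of the argument is a chain-level quasi-inverse pair between these two complexes. In one direction, the inclusion $i\colon X\hookrightarrow \mathcal{H}_{b'',\eta}(X)$ sends l.v.a. simplices to l.v.a. simplices; and if $\sigma_1\sim_{b''}\sigma_2$ in $X$, the straight-line prism $P(x,s,t)=s\sigma_1(tx,t)+(1-s)\sigma_2(tx,t)$ satisfies $\mathrm{dist}(P(x,s,t),X)=o(t^{b''})<\eta t^{b''}$, so it lies in $\mathcal{H}_{b'',\eta}(X)$ for all sufficiently small $t$. This prism is a chain-level homotopy converting $b''$-equivalence in $X$ into $\infty$-equivalence inside the horn, and hence $i_*$ descends to a well-defined map on MD-homology. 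In the reverse direction, I would construct a definable retraction $r\colon \mathcal{H}_{b'',\eta}(X)\to X$ obtained from a subanalytic triangulation of the pair $(\mathcal{H}_{b'',\eta}(X),X)$, satisfying $\|z-r(z)\|<\eta\|r(z)\|^{b''}$ for every $z$. Then for each l.v.a. simplex $\tau$ of the horn, $r\circ\tau$ is an l.v.a. simplex of $X$ and $\tau\sim_{b''}r\circ\tau$ inside the horn; the same linear-interpolation prism shows $i_*\circ r_*=\mathrm{id}$ on MD-homology, while $r_*\circ i_*=\mathrm{id}$ is tautological.

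The most delicate step I anticipate is constructing the retraction $r$ with the appropriate subanalytic regularity, uniform constants of order $\eta t^{b''}$, and compatibility with the l.v.a. condition. One must ensure that $r\circ\tau$ is automatically l.v.a. whenever $\tau$ is, and that the resulting chain map respects the homological subdivisions built into the MD chain complex. A subanalytic cell decomposition of the horn adapted to $X$, combined with a radial or nearest-point projection within each cell, should produce such an $r$; verifying the MD-compatibility and the uniformity of constants across cells is the main technical task, and is precisely where the discreteness of jumping rates enters a second time, since small variations of $b''$ within $(b,b')$ must not affect the cellular structure used to build $r$.
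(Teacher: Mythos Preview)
The paper does not contain a proof of this statement: it is quoted verbatim as Corollary~11.12 of \cite{MDH2022} in the preliminaries section and is used as a black box. There is therefore no ``paper's own proof'' to compare your proposal against.

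As for the proposal itself, two points deserve attention. First, you invoke the discreteness of the set of jumping rates in order to produce the interval $(b,b')$. In \cite{MDH2022} the finiteness of jumping rates is Theorem~11.14, which appears \emph{after} Corollary~11.12; depending on the logical dependencies in that paper, relying on it here may be circular. A self-contained argument for the existence of $b'$ (e.g.\ via o-minimality/subanalyticity of the function $b\mapsto$ Betti numbers of the punctured horn, or via a direct horn-neighborhood stability argument) would be safer. Second, your sketch of the retraction $r\colon \mathcal{H}_{b'',\eta}(X)\to X$ is the right geometric idea, but as you yourself note it is the technical crux; the existence of a \emph{subanalytic} retraction with a uniform bound $\|z-r(z)\|\le C\|z\|^{b''}$ is not automatic from a triangulation, and one typically needs the machinery of metrically conical or Lipschitz retractions developed in \cite{MDH2022}. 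Your outline is a reasonable heuristic for how the original proof proceeds, but the details you flag as ``delicate'' are genuinely the content of the argument in \cite{MDH2022}, not routine verifications.
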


\begin{Teo}[Theorem 11.14 in \cite{MDH2022}] \label{Teo:finitude-geradores}
	Let $(X,x_0) \in \mathbb{R}^n$ be a subanalytic germ. Then, for each $b \in \mathbb{F}$ and $n \in \mathbb{N}$, the $\Z$-modules $MDH_n^b(X,x_0,d_{inn})$ and $MDH_n^b(X,x_0,d_{out})$ are finitely generated over $\mathbb{Z}$.
\end{Teo}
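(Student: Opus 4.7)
The plan is to reduce the finite-generation claim to the classical finite generation of singular homology of compact subanalytic sets, by invoking the bridge between MD-Homology and singular homology of horn neighborhoods supplied in the preliminaries.

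For the outer metric, I would apply Theorem \ref{cor:birbrair} to the given $b \in \mathbb{F}$: there exists $b' > b$ so that, for any $b'' \in (b, b')$,
$$MDH^b_k(X, x_0, d_{out}) \;\cong\; H_k\bigl(\mathcal{H}_{b'', \eta}(X) \setminus \{x_0\}\bigr).$$
The horn neighborhood $\mathcal{H}_{b'', \eta}(X)$ is a subanalytic (more generally, definable) set, because the defining inequality $\|x - z\| < \eta \|x\|^{b''}$ with $x$ ranging in the subanalytic germ $X$ is itself definable, using that $b'' \in \mathbb{F}$ and the polynomial boundedness of the o-minimal structure. By the local conic structure for subanalytic germs, the punctured germ $\mathcal{H}_{b'', \eta}(X) \setminus \{x_0\}$ deformation retracts onto its trace on a small sphere around $x_0$, which is a compact subanalytic set. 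Any such set admits a finite triangulation (Hironaka/Hardt), and hence has finitely generated singular homology over $\Z$. This settles the outer case.

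For the inner case, I would use the analogous isomorphism between $MDH_\bullet^b(X, x_0, d_{inn})$ and the singular homology of an inner horn neighborhood $\mathcal{H}_{b'', \eta, d_{inn}}(X) \setminus \{x_0\}$ (the natural inner counterpart of Theorem \ref{cor:birbrair}). Since $d_{inn}$ is not directly definable, I would replace it throughout by the equivalent pancake metric from Remark \ref{Rem: pancake metric}. Two bi-Lipschitz equivalent definable metrics induce the same $b$-equivalence classes of simplices, hence the same MD-Homology groups, so this replacement is harmless; but now the inner horn neighborhood becomes definable, and the retraction-plus-triangulation argument applies verbatim to yield a finitely generated $\Z$-module. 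If one also wishes to include the case $b = \infty$, it follows at once from Theorem \ref{Teo:link_hoologia_singular}, because $H_{\bullet}(X \setminus \{x_0\}, x_0)$ is computed from the compact subanalytic link of $X$ at $x_0$.

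The main technical obstacle I anticipate lies in justifying, rigorously, that the horn neighborhood $\mathcal{H}_{b'', \eta}(X)$—which is open and pinched to a point at $x_0$—genuinely inherits the finite topology of a compact subanalytic set in a neighborhood of $x_0$. The cleanest route is to embed it as an open definable set in $\R^n$, apply the standard triangulation theorem of Hironaka/Hardt to a closed definable thickening, and then deformation retract to the sphere trace. A secondary subtlety is checking that the passage from $d_{inn}$ to the pancake metric preserves $b$-equivalence classes of l.v.a.\ simplices; this is where the bi-Lipschitz invariance of the MDH framework is essential, and it is the only place where definability of the metric really enters the argument.
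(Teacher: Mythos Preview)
This statement is not proved in the paper at all: it is quoted verbatim as Theorem~11.14 of \cite{MDH2022} and used as a black box (the same external theorem is also the source of Theorem~\ref{Teo:finitude-jumping-rate}). There is therefore no in-paper proof to compare your proposal against; any argument you supply is necessarily an independent one.

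Your outer-metric sketch is the standard route and is sound. Theorem~\ref{cor:birbrair} identifies $MDH^b_\bullet(X,x_0,d_{out})$ with the singular homology of a punctured definable horn neighborhood, whose germ retracts onto a compact definable link and hence has finitely generated homology by triangulation. One small point to make explicit: you must choose $b''\in\mathbb{F}\cap(b,b')$ so that the inequality $\|x-z\|<\eta\|x\|^{b''}$ is actually definable in the structure; this is harmless since $\mathbb{F}\supseteq\mathbb{Q}$ is dense in $\R$.

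For the inner metric there is a genuine gap. You invoke ``the natural inner counterpart of Theorem~\ref{cor:birbrair}'', but the paper records that result only for the \emph{outer} horn neighborhood $\mathcal{H}_{b'',\eta}(X)$, and no inner analogue is stated. Switching to the pancake metric makes the inner horn neighborhood definable, but it does not by itself supply the isomorphism with singular homology---that is a separate theorem you would have to prove or locate in \cite{MDH2022}. A cleaner and self-contained reduction is to use the normal embedding theorem of \cite{LBirbMosto2000NormalEmbedding}: the germ $(X,d_{inn})$ is definably bi-Lipschitz equivalent to a germ $(X',d_{out})$ with $X'$ LNE in some $\R^N$, so by bi-Lipschitz invariance of MD-Homology $MDH^b_\bullet(X,x_0,d_{inn})\cong MDH^b_\bullet(X',x_0',d_{out})$, and your outer argument then finishes the job.
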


\begin{Teo}[Theorem 11.14 in \cite{MDH2022}] \label{Teo:finitude-jumping-rate}
	The set of jumping rates of $(X,x_0)$ is finite.
\end{Teo}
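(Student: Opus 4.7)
The plan is to reduce the problem to a topological statement about a definable family of horn neighborhoods, and then invoke o-minimal trivialization to bound the number of topology changes. The bridge is Corollary \ref{cor:birbrair}: for every $b\in[1,\infty)$ there exists $b'(b)>b$ such that, for all $b''\in(b,b'(b))$, one has $MDH^{b}_{\bullet}(X,x_{0})\cong H_{\bullet}(\mathcal{H}_{b'',\eta}(X)\setminus\{0\})$. Thus, up to canonical isomorphism, knowing $MDH^{b}_{\bullet}$ for every $b$ amounts to knowing the singular homology of the punctured horn neighborhoods as a function of the parameter $b''$.

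First I would set up the parametrized family. The set
\[
\mathcal{H} := \{(z,b)\in(\R^{n}\setminus\{0\})\times[1,\infty) : \exists\, x\in X,\ \|x-z\|<\eta\|x\|^{b}\}
\]
is definable in the polynomially bounded o-minimal structure (the constraint $\|x-z\|<\eta\|x\|^{b}$ is handled using the exponential on a bounded logarithmic range, which is available because we only need $\|x\|$ small and the graph of $r\mapsto r^{b}$ is definable in a polynomially bounded structure for each $b\in\mathbb{F}$, while the finiteness statement is intrinsic to the structure). By Hardt's o-minimal trivialization theorem applied to the projection $\mathcal{H}\to[1,\infty)$, there is a finite partition $1=b_{0}<b_{1}<\cdots<b_{N}<\infty$ together with the interval $(b_{N},\infty)$ such that over each open piece the family $\mathcal{H}_{b'',\eta}(X)\setminus\{0\}$ is definably (and hence topologically) trivial. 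In particular, the singular homology of the fiber is constant on each open piece.

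Combining the two ingredients, if $b$ belongs to an open subinterval $(b_{i},b_{i+1})$ on which the horn-neighborhood fibers are trivially homotopy equivalent, then for every sufficiently small $\varepsilon>0$ both $MDH^{b-\varepsilon}_{\bullet}$ and $MDH^{b+\varepsilon}_{\bullet}$ are identified with the singular homology of the same fiber, and the map $h^{b+\varepsilon,b-\varepsilon}_{\bullet}$ is identified with the isomorphism induced by the trivializing inclusion of horn neighborhoods, hence it is an isomorphism. Therefore $b$ is not a jumping rate. The set of jumping rates is contained in $\{b_{1},\ldots,b_{N}\}$, which is finite. The main obstacle I foresee is justifying that the comparison map $h^{b+\varepsilon,b-\varepsilon}_{\bullet}$ coincides, under Corollary \ref{cor:birbrair}, with the map on singular homology induced by inclusion of horn neighborhoods: this is the naturality statement one needs, and it requires either going back to the construction of the isomorphism in \cite{MDH2022} or using the already established functoriality of MD-homology with respect to $b$-maps realized by the identity map on $X$ viewed as a map from $\mathcal{H}_{b+\varepsilon,\eta}(X)$ into $\mathcal{H}_{b-\varepsilon,\eta}(X)$. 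A secondary technical point is the definability of the $b$-exponentiation uniformly in $b$, which is why one must work with $b$ ranging over $\mathbb{F}\cap[1,\infty)$ inside the structure while still extracting a finiteness conclusion over all real $b\in[1,\infty)$; the finite generation of $MDH^{b}_{\bullet}$ (Theorem \ref{Teo:finitude-geradores}) plus semicontinuity of Betti numbers in definable families closes the remaining gap.
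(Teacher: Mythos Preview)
This theorem is not proved in the present paper; it is quoted verbatim from \cite{MDH2022} as a preliminary result, so there is no ``paper's own proof'' to compare against here. I can only assess your proposal on its own terms.

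Your outline has the right architecture---reduce to singular homology of horn neighborhoods via Corollary~\ref{cor:birbrair}, then bound topology changes by trivialization---but there is a genuine gap at the definability step, and it is not secondary. In a polynomially bounded o-minimal structure the function $(r,b)\mapsto r^{b}$ is \emph{not} definable with $b$ varying: definability of this two-variable map would force definability of the exponential (write $r^{b}=\exp(b\log r)$), contradicting polynomial boundedness. Consequently the set
\[
\mathcal{H}=\{(z,b):\exists\,x\in X,\ \|x-z\|<\eta\|x\|^{b}\}
\]
is not definable as a family over the real parameter $b$, and Hardt trivialization cannot be invoked as you state it. Your parenthetical remark that ``the graph of $r\mapsto r^{b}$ is definable for each $b\in\mathbb{F}$'' is true fiberwise but useless for trivialization, which requires the total space to be definable. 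Restricting to $b\in\mathbb{F}$ does not help either: $\mathbb{F}$ is typically dense in $\R$ (for instance $\mathbb{F}=\Q$ in the semialgebraic case), so a finite partition of $\mathbb{F}\cap[1,\infty)$ is not what Hardt gives you, and semicontinuity of Betti numbers alone does not produce finiteness of the set of discontinuities without an a priori bound.

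The actual argument in \cite{MDH2022} circumvents this by a change of variable that makes the family genuinely definable: roughly, one replaces the exponent parameter $b$ by a scale parameter (e.g.\ writing $t=\|x\|$ and reparametrizing by $s=t^{b}$, or equivalently working with the family of links at scale $t$ intersected with balls of radius $\eta t^{b}$ after rescaling), so that the resulting one-parameter family is definable and trivialization applies. If you want to repair your argument, that is the missing idea: you must reparametrize so that the horn condition becomes polynomial in the parameter, not exponential. The naturality concern you raise at the end is real but routine once the main construction is in place; the definability issue is the one that actually blocks the proof as written.
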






\subsection{$b$-maps and Mayer-Vietoris}

In this subsection we show some functoriality properties of the $b$-MD homology for a fixed $b$ by allowing a certain class of non-continuous maps, which we call $b$-maps. This makes our theory quite flexible. The discontinuities that we allow are {\em moderated} in a Lipschitz sense. This may be seen as a motivation for the name of the homology.

\begin{Def}
	\label{def:bmaps}
	Let $(X, x_0)$ and $(Y, y_0)$ be subanalytic germs. For $b \in (0, \infty)$, a \textbf{$b$-moderately discontinuous subanalytic map from $(X, x_0)$ to  $(Y, y_0)$} (or $b$-map, for short) is a finite collection $\{(C_i,f_i)\}_{i\in I}$, where $\{C_i\}_{i\in I}$ is a finite closed subanalytic  cover of $X$  and $f_i: C_i \to Y$ is a l.v.a. subanalytic map satisfying the following: for any $b$-equivalent pair of arcs $\gamma$ and $\gamma'$ contained in $C_i$ and $C_j$, respectively ($i$ and $j$ may be equal), the arcs $f_i\circ \gamma$ and $f_j\circ \gamma'$ are b-equivalent in $Y$. For $b = \infty$, a $b$-map from $X$ to $Y$ is a l.v.a. subanalytic map from $X$ to $Y$.
	
	Two $b$-maps $\{(C_i,f_i)\}_{i\in I}$ and $\{(C'_i,f'_i)\}_{i\in I'}$ are called {\bf $b$-equivalent} if for any $b$-equivalent pair of arcs $\gamma$, $\gamma'$ with Im$(\gamma) \subseteq C_i$ and Im$(\gamma') \subseteq C'_{i'}$, the arcs $f_i\circ \gamma$ and $f'_{i'}\circ \gamma'$ are b-equivalent in $Y$. We make an abuse of language and we also say that a $b$-map from $(X,x_0)$ to $(Y,y_0)$ is an equivalence class as above.

\end{Def}

\begin{remark}
	Let $\{(C_i,f_i)\}_{i\in I}$ be a $b$-map from $X$ to $Y$ and let $\{(D_j,g_j)\}_{j\in J}$ be a $b$-map from $Y$ to $Z$.  Then the composition of the two b-maps is well defined by $\{ (f_i^{-1}(D_j) \cap C_i, g_j \circ f_{i| f_i^{-1}(D_j) \cap C_i})\}_{(i,j) \in I \times J}$. Then, the category of metric subanalytic germs with $b$-maps is well defined. Moreover, the $b$-moderately discontinuous homology is invariant by isomorphisms in the category of $b$-maps.
\end{remark}

\begin{Def}
	\label{def:conicalretract}
	Let $\iota:X\hookrightarrow Y$ be a l.v.a. map of subanalytic germs which on the level of sets is an injection. A {\bf $b$-retraction} is a $b$-map $r:Y\to X$ such that 
	$r\circ\iota$ is the identity as a $b$-map. A {\bf $b$-deformation retraction} is a $b$-retraction such that $\iota\circ r$ is $b$-homotopic to the identity (for a definition of $b$-homotopy, see Definition 7.5 of \cite{MDH2022}). In those cases $X$ is called a {\bf $b$-retract} or {\bf $b$-deformation retract of $Y$}, respectively. A metric subanalytic germ is called {\bf $b$-contractible} if it admits $[0,\epsilon)$ as a $b$-deformation retract.   
\end{Def}

\begin{remark}
	\label{Rem:retracts}
	If $\iota:X\hookrightarrow Y$ admits a $b$-deformation retraction, $\iota$ induces a quasi-isomorphism of $b$-MD chain complexes. In particular, if $X$ is $b$-contractible then it has the $b$-MD-Homology of an arc (see Corollary 7.8 of \cite{MDH2022}).
\end{remark}

\begin{Def}
	\label{def:b-cover}
	Let $(X,x_0)$ be a germ and let $b \in (0, \infty]$. 
	A finite collection $\{U_i\}_{i \in I}$ of subanalytic germs in $x_0$ is called a {\bf $b$-cover of $(X,x_0)$} if it is a finite cover of $X\setminus\{x_0\}$ and, for any $i \in I$, there is a subanalytic subset $\widehat{U}_i\subseteq X$ such that 
	\begin{itemize}
		\item for any two $b$-equivalent arcs $\gamma_1,\gamma_2:[0,\epsilon)\to (X,x_0)$, if $\gamma_1$ has image in $U_i$ then $\gamma_2$ has image in $\widehat{U}_i$.  
		\item For any finite $J \subseteq I$ there is a 
		{subanalytic map $r_J:\bigcap_{i\in J}\widehat{U}_i \to \bigcap_{i \in J}  U_i$ which induces an inverse in the derived category of the  morphism of complexes:} 
		$$MDC^b_\bullet\left(\bigcap_{i\in J} U_i,x_0\right)\to MDC^b_\bullet\left(\bigcap_{i\in J}\widehat{U}_i,x_0\right).$$
	\end{itemize}
	We call the collection $\{\widehat{U}_i\}_{i \in I}$ a {\bf $b$-extension of $\{U_i\}_{i \in I}$}. A $b$-cover $\{U_i\}_{i \in I}$ of $(X,x_0)$ is said to be {\bf closed} (resp. \textbf{open}) if any of the subsets $U_i$ is closed (resp. open) in $X\setminus\{x_0\}$.
	
\end{Def}

\begin{remark}
	\label{lem:b-saturated2}
	In the terminology of Definition \ref{def:b-cover}, the following two conditions imply the respective two conditions of the definition:
	\begin{itemize}
	\item there is a $b$-horn neighborhood $\mathcal H_{b,\eta}(U_i)$ such that $\mathcal H_{b,\eta}(U_i)\cap X \subseteq \widehat{U}_i$ for any $i \in I$.
		\item For any finite $J \subseteq I$, the intersection $\bigcap_{i\in J} U_i$ is a $b$-deformation retract of $\bigcap_{i \in J} \widehat{U}_i$.
	\end{itemize}
Notice also that for $b=\infty$, any finite subanalytic cover of $X$ is a $b$-cover.	
\end{remark}






\begin{Teo}[Theorem 8.11 in \cite{MDH2022}]\label{Teo:MayerVietoris}
	Let $(X,x_0)$ be a subanalytic germ and let $\{U, V\}$ be either a closed or an open a $b$-cover of $(X,x_0)$. Then there is a Mayer-Vietoris long exact sequence as follows:
	
	\begin{align*}\label{long exact Mayer-Vietoris sequence}
		\begin{split}
			\cdots &\rightarrow MDH_{n+1}^b(X) \rightarrow  MDH_n^b(U \cap V)    \rightarrow  MDH_n^b(U) \oplus MDH_n^b(V)  \rightarrow MDH_n^b(X)             \rightarrow \cdots
		\end{split}
	\end{align*}
\end{Teo}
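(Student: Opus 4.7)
The plan is to mimic the classical proof of Mayer--Vietoris via a short exact sequence of chain complexes, taking care at every step to respect the $b$-equivalence relation and to exploit the $b$-extensions $\{\widehat{U}, \widehat{V}\}$ guaranteed by Definition \ref{def:b-cover}. Write $MDC^{b}_*(U) + MDC^{b}_*(V) \subseteq MDC^{b}_*(X)$ for the subcomplex generated by those $b$-classes of chains admitting a representative whose simplices each factor through $U$ or through $V$. First I would verify that the assignments
\[
\alpha \colon MDC^{b}_*(U\cap V) \to MDC^{b}_*(U) \oplus MDC^{b}_*(V), \quad \sigma\mapsto (\sigma,-\sigma),
\]
\[
\beta \colon MDC^{b}_*(U) \oplus MDC^{b}_*(V) \to MDC^{b}_*(U)+MDC^{b}_*(V),\quad (\sigma,\tau)\mapsto \sigma+\tau,
\]
are well-defined maps of chain complexes modulo $\sim_b$; this uses the $b$-map compatibility of inclusions. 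Then I would show that
\[
0 \to MDC^{b}_*(U\cap V) \xrightarrow{\alpha} MDC^{b}_*(U) \oplus MDC^{b}_*(V) \xrightarrow{\beta} MDC^{b}_*(U) + MDC^{b}_*(V) \to 0
\]
is exact. Injectivity of $\alpha$ and surjectivity of $\beta$ are formal; for $\ker\beta = \mathrm{im}\,\alpha$ the point is that if $\sigma + \tau \sim_b 0$ with $\sigma$ supported in $U$ and $\tau$ supported in $V$, then splitting into $\sim_b$-classes forces pairings $\sigma_i \sim_b -\tau_j$ of simplices lying in $U$ and $V$, and Definition \ref{def:b-cover} applied with $J=\{U,V\}$ lets us replace both by $b$-equivalent simplices with image in $U\cap V$ via the retraction $r_J$.

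The second, and main, step is to show that the inclusion $\iota\colon MDC^{b}_*(U) + MDC^{b}_*(V) \hookrightarrow MDC^{b}_*(X)$ is a quasi-isomorphism. This is the $b$-analogue of the small chains theorem, and is where the bulk of the work lies. Given an l.v.a.\ simplex $\sigma\colon \hat{\Delta}_n \to X$, I would use o-minimal cell decomposition to produce a homological subdivision $\{\rho_i\}_{i\in I}$ of $\hat{\Delta}_n$ such that each $\sigma\circ \rho_i$ maps into $\widehat{U}$ or into $\widehat{V}$; here subanalyticity of $U, V$ and l.v.a.-ness of $\sigma$ guarantee that this can be done uniformly in a cone around the vertex. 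Then, using the retraction maps $r_{\{U\}}\colon \widehat{U}\to U$ and $r_{\{V\}}\colon \widehat{V}\to V$ from Definition \ref{def:b-cover}, I would post-compose to obtain chains in $U$ and $V$ that together lie in $MDC_*^{b}(U)+MDC_*^{b}(V)$. A chain-level homotopy, built from the prism construction applied pieces-wise to the subdivision cells and then glued via the $b$-homotopies implicit in the deformation retractions of Remark \ref{lem:b-saturated2}, would show that this operator $S\colon MDC^{b}_*(X) \to MDC^{b}_*(U)+MDC^{b}_*(V)$ satisfies $\iota\circ S \sim \mathrm{id}$ and $S\circ \iota \sim \mathrm{id}$ in the $b$-sense. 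The hypothesis that the cover is open (so the interiors of $U,V$ cover) or closed (so one can first pass to a regular closed neighborhood) is used to guarantee that the subdivision process can be iterated finitely many times and terminates.

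Granting the two steps above, the zig-zag/snake lemma applied to the short exact sequence produces a long exact sequence in homology in which the middle term at level $n$ is $MDH^{b}_n(U)\oplus MDH^{b}_n(V)$, the left term is $MDH^{b}_n(U\cap V)$, and the right term is $H_n(MDC^{b}_*(U)+MDC^{b}_*(V))$. Quasi-isomorphism of $\iota$ identifies the latter with $MDH^{b}_n(X)$, yielding the stated Mayer--Vietoris sequence. I expect the main obstacle to be the $b$-analogue of the small-chain subdivision argument: classical barycentric subdivision does not a priori preserve the linearly-vertex-approaching condition nor $b$-equivalence classes, so one must produce a subdivision scheme that is subanalytic, homological in the sense of Definition 2, and compatible with $\sim_b$, and one must verify that the resulting chain homotopy is itself a legitimate $b$-map --- precisely the kind of checking carried out, in its general form, in Sections 7 and 8 of \cite{MDH2022}.
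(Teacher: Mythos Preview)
The paper does not prove this theorem at all: it is quoted verbatim as Theorem~8.11 of \cite{MDH2022} and used as a black box throughout Sections~\ref{sec:degree-not-1}--\ref{sec:weakouter-MDH}. There is therefore no in-paper proof to compare your proposal against.

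That said, your outline is the natural one and is essentially the route taken in \cite{MDH2022}: a short exact sequence of $b$-chain complexes together with a ``small chains'' quasi-isomorphism $MDC^{b}_*(U)+MDC^{b}_*(V)\hookrightarrow MDC^{b}_*(X)$, with the $b$-extensions $\widehat{U},\widehat{V}$ and the retractions $r_J$ from Definition~\ref{def:b-cover} playing exactly the role you assign them. You correctly single out the subdivision step as the crux: one must produce homological subdivisions that are subanalytic, preserve the l.v.a.\ condition, and behave well under $\sim_b$, and then build a genuine $b$-chain homotopy out of them. One small point to tighten in your exactness argument for $\ker\beta=\operatorname{im}\alpha$: the first condition in Definition~\ref{def:b-cover} is stated for \emph{arcs}, so to push a simplex $\tau$ supported in $V$ and $b$-equivalent to one supported in $U$ into $\widehat{U}\cap\widehat{V}$ you need an arc-by-arc (or fibre-by-fibre) argument before applying $r_{\{U,V\}}$; this is routine but should be made explicit.
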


\section{MD-Homology of Surface Germs, Degree $n\ne 1$} \label{sec:degree-not-1}

This section is devoted to calculate the MD-homology of surface germs $(X,0)$ for $n\ne1$, for both the inner and outer metrics. We first calculate the MD-Homology of degree $n\ge 1$ for H\"older triangles, and then we use this result to calculate the MD-Homology of degree $n>1$ for any subanalytic surface germ. 

In what follows, we consider the abelian group of any MD-Homology as $\Z$ and when we omit the base point, it is understood that it is $0$. We also consider $b \in (0,+\infty)$, since the following results are immediate for $b=\infty$, by Theorem \ref{Teo:link_hoologia_singular}.


\begin{Prop}\label{Prop: inner MDH of Holder triangle}
	Let $X$ be a $\beta$-H\"older triangle. Then, $X$ is $b$-contractible for the inner metric. In particular, its MD-Homology with respect to the inner metric, for any $m\ge 1$, is $MDH_{m}^b (X, 0, d_{inn}) =
	\{0\}$.
\end{Prop}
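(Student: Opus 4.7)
The plan is to reduce to the standard model and then exhibit an explicit $b$-deformation retraction onto one of the boundary arcs. Since MD-Homology with respect to the inner metric is invariant under inner lipeomorphisms, and since every $\beta$-H\"older triangle $X$ is by definition inner lipeomorphic to $T_\beta\subset\R^2$, I can assume without loss of generality that $X=T_\beta=\{(x,y)\in\R^2\mid 0\le x\le 1,\ 0\le y\le x^\beta\}$ and that the distinguished point is the origin. Once $b$-contractibility is established, the vanishing of $MDH_m^b(X,0,d_{inn})$ for $m\ge 1$ follows immediately from Remark \ref{Rem:retracts}, which asserts that a $b$-contractible germ has the $b$-MD-Homology of an arc.

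For the retraction, I would take the boundary arc $\gamma_1(t)=(t,0)$, the inclusion $\iota\colon[0,1)\hookrightarrow T_\beta$, $\iota(t)=(t,0)$, and the map $r\colon T_\beta\to\iota([0,1))$ defined by $r(x,y)=(x,0)$. Clearly $r\circ\iota=\mathrm{id}_{[0,1)}$. I would then produce a homotopy between $\iota\circ r$ and $\mathrm{id}_{T_\beta}$ via the linear squeeze
\[
H\colon T_\beta\times[0,1]\longrightarrow T_\beta,\qquad H(x,y,s)=(x,(1-s)y),
\]
which is well-defined because $0\le (1-s)y\le y\le x^\beta$ for every $s\in[0,1]$. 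At $s=0$ it is the identity and at $s=1$ it equals $\iota\circ r$.

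The heart of the proof is to check that $r$ is a $b$-map and that $H$ is a $b$-homotopy (in the sense of Definition 7.5 of \cite{MDH2022}). Given two l.v.a.\ arcs $\gamma_i(t)=(x_i(t),y_i(t))$ in $T_\beta$ with $itord(\gamma_1,\gamma_2)>b$, one has $d_{inn}(\gamma_1(t),\gamma_2(t))=o(t^b)$, from which the coordinate-wise estimates $|x_1(t)-x_2(t)|=o(t^b)$ and $|y_1(t)-y_2(t)|=o(t^b)$ follow (using that inner distance dominates Euclidean coordinate differences). Consequently
\[
d_{inn}\bigl(H(\gamma_1(t),s),H(\gamma_2(t),s)\bigr)\lesssim |x_1(t)-x_2(t)|+(1-s)|y_1(t)-y_2(t)|=o(t^b)
\]
uniformly in $s$, and the same bound with $s=1$ shows $r\circ\gamma_1\sim_b r\circ\gamma_2$. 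Thus $r$ and $H$ respect $b$-equivalence of arcs, so they define $b$-maps, exhibiting $\iota$ as a $b$-deformation retraction.

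The main technical obstacle I anticipate is justifying, cleanly and rigorously, the inequality $d_{inn}\lesssim |x_1-x_2|+|y_1-y_2|$ for points inside $T_\beta$ at the same height from the origin, which requires producing explicit rectifiable paths inside the triangle whose length is controlled in terms of the coordinate differences. This is routine but must be done with care because $T_\beta$ is not LNE: one has to exhibit a path in $T_\beta$ that, for $\beta\ge 1$, remains within the triangle and whose length is of the desired order. Once this estimate is in hand the rest is formal, and the conclusion $MDH_m^b(X,0,d_{inn})=\{0\}$ for $m\ge 1$ follows because the $b$-MD-Homology of the arc $[0,\varepsilon)$ vanishes in positive degrees.
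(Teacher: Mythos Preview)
Your approach is correct and follows essentially the same strategy as the paper: exhibit a linear deformation retraction onto an arc and verify it is a $b$-homotopy. The paper works directly in $X$, retracts via $r(x)=\gamma_0(\|x\|)$ and the straight-line homotopy $H(x,s)=(1-s)x+sr(x)$, then reduces to the LNE (pancake) case to compare inner and outer distances; you instead first pass to the standard model $T_\beta$ by inner-lipeomorphism invariance and use the coordinate projection $(x,y)\mapsto(x,0)$ with the squeeze $H(x,y,s)=(x,(1-s)y)$. Both routes are fine, and yours has the advantage that $H$ visibly lands in $T_\beta$.

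Your one misstep is the claim that ``$T_\beta$ is not LNE''. In fact the standard $\beta$-H\"older triangle \emph{is} LNE: for $(x_1,y_1),(x_2,y_2)\in T_\beta$ with $x_1\le x_2$, the horizontal segment from $(x_1,y_1)$ to $(x_2,y_1)$ lies in $T_\beta$ (since $y_1\le x_1^\beta\le x^\beta$ for $x\ge x_1$) and so does the vertical segment from $(x_2,y_1)$ to $(x_2,y_2)$; this L-shaped path has length $|x_1-x_2|+|y_1-y_2|\le\sqrt{2}\,d_{out}$. Hence the estimate $d_{inn}\bigl(H(\gamma_1(t),s),H(\gamma_2(t),s)\bigr)\lesssim |x_1(t)-x_2(t)|+|y_1(t)-y_2(t)|$ that you flagged as the ``main technical obstacle'' is in fact immediate, and the rest of your argument goes through without further work.
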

\begin{proof}
	Let $\gamma_0 \subset X$ be an arc and consider the inclusion $\iota \colon \gamma_0 \rightarrow X$. Let $r\colon X\rightarrow \gamma_0$ be the map given by $r(x) = \gamma_0(t)$ if, and only if, $d_{out}(x,0) = t$. Notice that $r$ maps each point of $X\cap S(0,t)$ into $\gamma_0(t)$. In particular, $\iota\circ r \colon X \rightarrow X$ maps $x$ to $\gamma_0(|x|)$. Let us prove that $\iota \circ r$ is $b$-homotopic to ${\rm id}_X$, with respect to the inner metric in $X$. The result will therefore follow from Proposition 4.4 of \cite{MDH2022}, since $X$ has the same MD-Homology of an arc, by Remark \ref{Rem:retracts}. 
	
	Consider $H \colon X\times I \rightarrow X$, where $I=[0,1]$, given by $H(x,s) = (1-s)x + sr(x)$. Now, considering $\iota$, $r$ and $H$ as $b$-maps, for any two arcs $\theta,\theta' \subset X$, we must prove that if $itord(\theta,\theta') >b$ then $itord(H(\theta,s),H(\theta',s))>b$ for any $s\in I$. Indeed, as the inner tangency order is obtained from the exponent of the pancake metric (see Remark \ref{Rem: pancake metric}), it is enough to prove the case where $X$ is a pancake, i.e., $X$ is LNE. In this case, inner tangency order and the tangency order coincide. Thus, $$d_{out}(H(\theta(t),s) , H(\theta'(t),s)) = |(\theta(t)-\theta'(t)) + s(\theta'(t)-\theta(t) + r(|\theta(t)|) - r(|\theta'(t)|))|.$$
	Since arcs are assumed to be parameterized by the distance to the origin, we have $r(|\theta(t)|)=r(|\theta'(t)|)=\gamma_0(t)$. We finally get $$d_{out}(H(\theta(t),s) , H(\theta'(t),s))= |s-1||\theta'(t)-\theta(t)|.$$
	Therefore, $$itord(\theta,\theta') = itord(H(\theta,s),H(\theta',s)) .$$
\end{proof}
As an immediate consequence of Proposition \ref{Prop: inner MDH of Holder triangle}, we obtain:
\begin{Cor}\label{Cor: MDH of a NE Holder triangle}
	Let $X$ be a LNE $\beta$-H\"older triangle. Then, its MD-Homology with respect to the outer metric, for any $m\ge 1$, is $$MDH_{m}^b (X, 0, d_{out}) =
	\{0\}.$$
\end{Cor}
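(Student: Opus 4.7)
The plan is to leverage the fact that the two metrics $d_{inn}$ and $d_{out}$ are bi-Lipschitz equivalent on an LNE set, and show that this forces the corresponding $b$-moderately discontinuous chain complexes to coincide; then Proposition \ref{Prop: inner MDH of Holder triangle} finishes the job.

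More concretely, first I would unpack what ``LNE'' contributes at the chain level. Since $X$ is a LNE $\beta$-H\"older triangle, there exists a constant $C \ge 1$ such that
$$d_{out}(x,y) \le d_{inn}(x,y) \le C\, d_{out}(x,y), \quad \forall x,y \in X.$$
Given two l.v.a. $n$-simplices $\sigma_1, \sigma_2 \in MDC_n^{pre}(X,0)$ and $b\in(0,\infty)$, this double inequality implies
$$\frac{\max_{x \in \Delta_n} d_{out}(\sigma_1(tx,t),\sigma_2(tx,t))}{t^b} \le \frac{\max_{x \in \Delta_n} d_{inn}(\sigma_1(tx,t),\sigma_2(tx,t))}{t^b} \le C \cdot \frac{\max_{x \in \Delta_n} d_{out}(\sigma_1(tx,t),\sigma_2(tx,t))}{t^b},$$
so one limit tends to $0$ as $t\to 0^+$ if and only if the other does. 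Hence the $b$-equivalence relations $\sim_b^{d_{inn}}$ and $\sim_b^{d_{out}}$ on l.v.a. simplices coincide on $X$, and by Definition \ref{Def: b-equiv of simplexes2} the same holds for $b$-equivalence on l.v.a. chains.

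Consequently, the quotient chain complexes are identical:
$$MDC_{\bullet}^{b}(X,0,d_{inn}) = MDC_{\bullet}^{pre}(X,0)/\sim_b^{d_{inn}} = MDC_{\bullet}^{pre}(X,0)/\sim_b^{d_{out}} = MDC_{\bullet}^{b}(X,0,d_{out}),$$
and the boundary operators (being defined purely at the $MDC^{pre}$ level via face maps) agree. Therefore $MDH_m^b(X,0,d_{out}) \cong MDH_m^b(X,0,d_{inn})$ for every $m \ge 0$ and every $b \in (0,\infty)$. Combining with Proposition \ref{Prop: inner MDH of Holder triangle}, which yields $MDH_m^b(X,0,d_{inn}) = \{0\}$ for $m \ge 1$, we get the desired vanishing. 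The case $b=\infty$ follows directly from Theorem \ref{Teo:link_hoologia_singular} together with the fact that $X\setminus\{0\}$ is homotopy equivalent to an arc (it deformation retracts to a boundary arc).

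There is essentially no obstacle here; the only care needed is to verify that the bi-Lipschitz equivalence of the two metrics translates faithfully into equality of $b$-equivalence classes at the level of both simplices and chains (conditions (a), (b), (c) of Definition \ref{Def: b-equiv of simplexes2}), which it does because these conditions depend only on the relation $\sim_b$ itself.
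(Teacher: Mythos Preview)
Your proposal is correct and is precisely the argument the paper has in mind: the paper states the corollary as ``an immediate consequence of Proposition \ref{Prop: inner MDH of Holder triangle}'' without further detail, and you have simply unpacked why LNE forces the $b$-equivalence relations (and hence the MD chain complexes) for $d_{inn}$ and $d_{out}$ to coincide. There is nothing to add.
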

\begin{Teo}\label{Teo: MD-dim-ge-2}
Let $X \in \mathbb{R}^N$ be a surface germ and $m \ge 2$ be an integer. Then, for $d \in \{d_{inn}, d_{out}\}$, we have
$$MDH_m^{b}(X,0,d) \cong \{0\}$$
\end{Teo}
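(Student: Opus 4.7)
The plan is to establish the theorem by induction on the number of pieces of a Lipschitz-adapted decomposition of $X$, applying the Mayer-Vietoris long exact sequence (Theorem \ref{Teo:MayerVietoris}) and reducing to the base cases provided by Proposition \ref{Prop: inner MDH of Holder triangle} and Corollary \ref{Cor: MDH of a NE Holder triangle}. Since both results already furnish vanishing in \emph{all} positive degrees for the appropriate class of H\"older triangles, the argument needs only to control the MD-homology of the intersections appearing in Mayer-Vietoris.

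The decomposition differs between the two metrics. For $d=d_{inn}$, I would realize $X$ as the geometric H\"older complex associated with its canonical H\"older complex (Definition \ref{Def:canonical-complex}), writing $X=T_1\cup\cdots\cup T_k$ as a union of H\"older triangles meeting pairwise only along boundary arcs; by Proposition \ref{Prop: inner MDH of Holder triangle} each $T_j$ has trivial inner $b$-MD-homology in positive degrees. For $d=d_{out}$, I would first take a reduced pancake decomposition (Definition \ref{Def: pancake decomposition}, Remark \ref{Rem: existence of pancake decomp}); each pancake is LNE, hence inner and outer metrics agree on it, so applying the H\"older complex construction inside each pancake refines the decomposition into LNE H\"older triangles $T_1,\ldots,T_k$ of $X$, and Corollary \ref{Cor: MDH of a NE Holder triangle} supplies the base case.

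Writing $U_j:=T_1\cup\cdots\cup T_j$, the induction hypothesis is $MDH_m^b(U_j,0,d)=0$ for every $m\ge 2$. The inductive step applies Mayer-Vietoris to $\{U_{j-1},T_j\}$. The intersection $U_{j-1}\cap T_j$ is at most one-dimensional, being a finite union of arcs meeting (at most) at the origin. From the portion
\[
MDH_m^b(U_{j-1})\oplus MDH_m^b(T_j)\;\longrightarrow\; MDH_m^b(U_j)\;\longrightarrow\; MDH_{m-1}^b(U_{j-1}\cap T_j)
\]
for $m\ge 2$, the desired vanishing reduces to showing $MDH_r^b=0$ in degrees $r\ge 1$ for a finite bouquet of arcs at the origin. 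A single arc is its own $b$-deformation retract onto $[0,\varepsilon)$, hence $b$-contractible, and has trivial MD-homology in positive degrees by Remark \ref{Rem:retracts}; the bouquet case then follows by another Mayer-Vietoris, since two arcs meeting only at the origin have intersection equal to the origin.

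The principal obstacle is verifying, at each step, that $\{U_{j-1},T_j\}$ is a \emph{closed $b$-cover} in the sense of Definition \ref{def:b-cover}. One must produce $b$-extensions $\widehat{U}_{j-1}$ and $\widehat{T}_j$ whose intersections deformation-retract onto the corresponding intersections of the original pieces, as in Remark \ref{lem:b-saturated2}. I expect to take $\widehat{T}_j:=X\cap \mathcal{H}_{b,\eta}(T_j)$ for a suitable $\eta>0$ and to build the required $b$-deformation retractions by combining the radial retraction used in the proof of Proposition \ref{Prop: inner MDH of Holder triangle} with the LNE structure of each pancake/H\"older triangle. Producing these retractions so that they respect $b$-equivalence of arcs — and checking the analogous $b$-cover condition in the auxiliary Mayer-Vietoris argument for bouquets of arcs — is the part of the proof that will require the most care.
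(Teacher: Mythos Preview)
Your overall strategy---induction on the number of LNE H\"older triangles in a pancake decomposition, with Mayer--Vietoris at each step---is exactly the paper's. The gap is in the step you flag as ``requiring the most care'': for the outer metric, the naive cover $\{U_{j-1},T_j\}$ is \emph{not} a $b$-cover in general, and no choice of $\widehat{U}_{j-1},\widehat{T}_j$ will fix this. Consider the simplest obstruction: $T_1=T(\gamma_0,\gamma_1)$ and $T_2=T(\gamma_1,\gamma_2)$ with $tord(\gamma_0,\gamma_2)>b>\mu(T_i)$ (a bubble snake). The set-theoretic intersection $T_1\cap T_2$ is the single arc $\gamma_1$, but any $b$-extension $\widehat{T}_1\cap\widehat{T}_2$ must contain a neighborhood of $\gamma_0$ in $T_1$ and of $\gamma_2$ in $T_2$ (since these are $b$-equivalent to arcs in the other triangle). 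This extra component is not $b$-equivalent to $\gamma_1$, so $\gamma_1$ cannot be a $b$-deformation retract of $\widehat{T}_1\cap\widehat{T}_2$, and the second condition of Definition~\ref{def:b-cover} fails. Your assertion that ``$U_{j-1}\cap T_j$ is at most one-dimensional'' is correct set-theoretically but irrelevant to the Mayer--Vietoris input you actually need.

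The paper's remedy is to abandon the naive cover and instead \emph{enlarge} the two pieces before applying Mayer--Vietoris. Concretely, one identifies, for each pair $(i,j)$, the closed set of parameters $x$ for which the arc $\gamma_{i,x}$ in $T_i$ is $b$-equivalent to some arc in $T_j$; this set is a finite union of closed intervals, cutting out finitely many sub-H\"older-triangles $T_{i,j}(k)\subset T_i$. One then sets $U:=\bigl(\bigcup_{k<n}T_k\bigr)\cup\bigl(\bigcup_{k,p}T_{n,k}(p)\bigr)$ and $V:=T_n\cup\bigl(\bigcup_{k,p}T_{k,n}(p)\bigr)$. Now $\{U,V\}$ \emph{is} a closed $b$-cover by construction (each piece already contains everything $b$-equivalent to it), $U\sim_b\bigcup_{k<n}T_k$ and $V\sim_b T_n$ so the induction hypothesis applies, and $U\cap V$ is a finite union of LNE H\"older triangles and arcs, each $b$-contractible, hence $MDH^b_{m-1}(U\cap V)=0$ for $m\ge 2$. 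This is the missing idea: rather than proving the naive cover is a $b$-cover (it isn't), thicken the cover so that the $b$-cover condition becomes automatic and the intersection becomes two-dimensional but visibly $b$-contractible.
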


\begin{proof}
	Consider a pancake decomposition $\{T_i\}_{1 \le i \le n}$ for $X$, such that each $T_i$ is a LNE H\"older triangle. Such decomposition always exists, since each pancake of $X$ can be subdivided into finitely many H\"older triangles. We will prove the theorem by induction on $n$. The base case $n=1$ is trivial by Corollary \ref{Cor: MDH of a NE Holder triangle}. For the inductive step, for each $1\le i \le n$, let $\sigma_i : \hat{\Delta}_1 \to T_i$ be the 1-simplex whose image is the germ $T_i$. For each $x \in [0,1]$, let the arc $\gamma_{i,x}=\gamma_{i,x}(t) = \sigma_i(tx,t)$. For each $\eta>0$ and $i,j \in \{1,\dots,n\}$, with $i \ne j$, define
	\begin{equation*}
		I_{i,j}(\eta)=\left\{ x \in [0,1] \mid \exists \; y \in [0,1] : \gamma_{i,x} \subset \overline{\mathcal{H}_{b, \eta} (\gamma_{j,y})} \right\} \; ; \; I_{i,j}=\bigcap_{\eta>0} I(\eta).
	\end{equation*}
	The set $I_{i,j}$ can be equivalently defined as
	\begin{equation*}
		I_{i,j}=\left\{ x \in [0,1] \mid \exists \; y \in [0,1] : \lim\limits_{t\to 0^+}\frac{d(\gamma_{i,x}(t), \gamma_{j,y}(t))}{t^b}=0\right\}.
	\end{equation*}
	Therefore, $I_{i,j}$ is a definable set in $[0,1]$, which implies that $I_{i,j}$ is a finite union of points and intervals. Moreover, since $\eta < \eta^{\prime}$ implies $I_{i,j}(\eta) \subseteq I_{i,j}(\eta^{\prime})$, we have that each interval of $I_{i,j}$ is closed. We will now investigate the structure determined by $I_{i,j}$ on $T_i$ when $I_{i,j} \ne \emptyset$. In this case, $I_{i,j}$ is a union of $m_{i,j}$ closed intervals $[a_1,b_1], \dots, [a_{m_{i,j}},b_{m_{i,j}}]$ (some of them possibly degenerated), with $a_k \le b_k$ ($1\le k\le m_{i,j}$) and $b_k < a_{k+1}$ ($1\le k\le m_{i,j}-1$). 
	
	For $1\le k\le m_{i,j}$, let $T_{i,j}(k)=\left\{\bigcup_{x \in [a_k,b_k]}\gamma_{i,x}\right\}$. Each $T_{i,j}(k) \subset T_i$ is either an arc or a LNE H\"older triangle. In the latter case, each $T_{i,j}(k)$ is $b$-contractible, since $T_{i,j}(k)$ has the same $b$-homotopy type of $T_i$ and $T_i$ is $b$-contractible. Now let
	\begin{equation*}
		U=\bigcup_{k=1}^{n-1}\left(T_k\cup\left(\bigcup_{1\le p \le m_{n,k}}T_{n,k}(p)\right)\right) \; ; \; V=  T_n \cup \left( \bigcup_{k=1}^{n-1} \left( \bigcup_{1\le p \le m_{k,n}}T_{k,n}(p) \right)\right).
	\end{equation*}
	Since $\gamma_{i,x} \sim_b \gamma_{j,y}$ if and only if $x \in I_{i,j}$ and $y \in I_{j,i}$, we have that $\{U,V\}$ is a closed $b$-cover of $X$ (see Definition \ref{def:b-cover}). Moreover
	\begin{equation*}
		T_k\cup\left(\bigcup_{1\le p \le m_{n,k}}T_{n,k}(p)\right) \sim_b T_k \; ; \; T_n \cup \left( \bigcup_{1\le p \le m_{k,n}}T_{k,n}(p)\right) \sim_b T_n,
	\end{equation*}
	implying $U \sim_b T_1 \cup \dots \cup T_{n-1}$ and $V\sim_b T_n$. We also have
	\begin{equation*}
		U \cap V = \left( \bigcup_{k=1}^{n-1} \left( \bigcup_{1\le p \le m_{n,k}}T_{n,k}(p) \right)\right)\cup\left( \bigcup_{k=1}^{n-1} \left( \bigcup_{1\le p \le m_{k,n}}T_{k,n}(p) \right)\right).
	\end{equation*}
	Since each $T_{n,k}(p)$, $T_{k,n}(p)$ is $b$-contractible, $U\cap V$ is also $b$-contractible, and since the MD-Homology of degree $m-1\ge 1$ for an arc is trivial, we have $MDH_{m-1}^{b}(U \cap V) \cong \{0\}$. Now consider the following Mayer-Vietoris long exact sequence:
	\begin{equation*}
		\cdots \to MDH_m^{b}(U) \oplus MDH_m^{b}(V) \to MDH_m^{b}(X) \to MDH_{m-1}^{b}(U \cap V) \to \cdots.
	\end{equation*}
	By induction hypothesis, we have $MDH_m^{b}(U) \cong MDH_m^{b}(T_1 \cup \dots \cup T_{n-1}) \cong \{0\}$ and $MDH_m^{b}(V) \cong MDH_m^{b}(T_n) \cong \{0\}$. Then, the long exact sequence reduces to
	\begin{equation*}
		\cdots \to \{0\} \to MDH_m^{b}(X) \to \{0\} \to \cdots,
	\end{equation*}
	which implies $MDH_m^{b}(X) \cong \{0\}$, completing the inductive step.
\end{proof}

\begin{remark}\label{Rem: MD-Homology of degree 0}
	By Proposition 9.2 of \cite{MDH2022}, the $b$-MD-Homology of any subanalytic germ $(X,x_0)$ is isomorphic to $\mathbb{Z}^{r(b,X)}$, where $r(b,X)$ is the number of $b$-equivalent connected components of $X$.
\end{remark}


\section{MD-Homology for Surfaces Germs, Degree 1 (Inner Metric)}\label{section:MD-Surfaces-inner}

By the results obtained in the previous section, the most relevant information regarding Lipschitz geometry of surface germs is in this degree, for either the inner or outer metric. This section is devoted to calculate the MD-Homology of degree 1, for the inner metric.

\begin{Def}\label{contractions}
	Let $(G,\sigma) \in HC_{\mathbb{F}}$ be a H\"older complex and $b \in \mathbb{F}$, $b\ge 1$. We define a {\bf $b$-contraction of $(G,\sigma)$} as a H\"older complex $(\tilde{G},\tilde \sigma)$ obtained from $(G, \sigma)$ by applying one of the two following operations:
	\begin{itemize}
		\item {\bf Operation $A_b$:} Let $u,v \in V(G)$ connected by a cut-edge $\tilde{e} \in E(G)$ (an edge whose removal increase the number of connected components of $G$), such that $\sigma(\tilde{e}) \le b$. Let $e_1,\dots,e_p \ne \tilde{e}$ the edges incident to $u$, with $e_i$ connecting $u$ to $u_i$, for $1\le i \le p$ (possibly $p = 0$) and $f_1,\dots,f_q \ne \tilde{e}$ the edges incident to $v$, with $f_i$ connecting $v$ to $v_i$, for $1\le i \le q$ (possibly $q = 0$). Then, operation $A_b$ on $\{u,v\}$ takes $(G,\sigma)$ to the H\"older complex $(\tilde{G},\tilde \sigma)$, where $V(\tilde G) = (V(G) \setminus \{u,v\}) \sqcup \{w\}$ and
		\begin{equation*}
			E(\tilde G) = (E(G) \setminus \{\tilde{e}, e_1, \dots, e_p, f_1, \dots, f_q\}) \sqcup \{\tilde{e}_1, \dots, \tilde{e}_p, \tilde{f}_1, \dots, \tilde{f}_q\}
		\end{equation*}
		Here, $\tilde{e_i}$ is the edge connecting $w$ to $u_i$ ($1\le i \le p$) and $\tilde{f}_i$ is the edge connecting $w$ to $v_i$ ($1\le i \le q$). Moreover, $\tilde{\sigma}$ is defined as:
		$$\begin{cases}
			\tilde \sigma (e)=\sigma (e), & e \ne \tilde{e}_1,\dots,\tilde{e}_p;\tilde{f}_1,\dots,\tilde{f}_q  \\
			\tilde \sigma (\tilde{e}_i)=\sigma (e_i), & 1 \le i \le p  \\
			\tilde \sigma (\tilde{f}_i)=\sigma (f_i), & 1 \le i \le q  \\
		\end{cases}$$
		\begin{figure}[h!]
			\centering
			\includegraphics[width=6in]{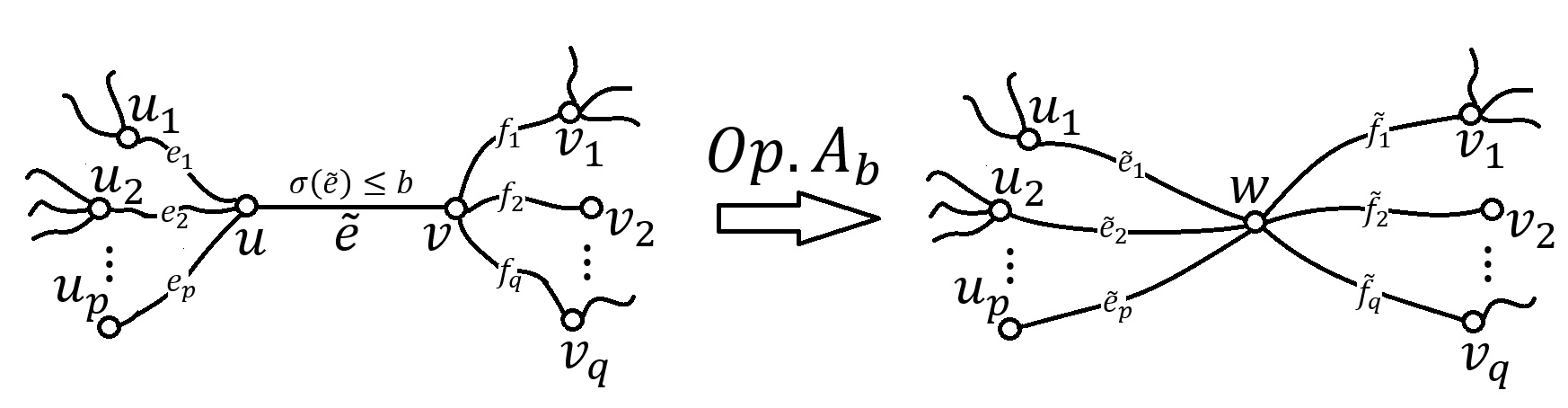}
			\caption{Diagram of Operation $A_b$.}
		\end{figure}
		
		\item {\bf Operation $B_b$:} Let $u,v \in V(G)$, $m\ge 1$ integer and let $ a_1, \dots, a_m \in E(G)$ be all the edges connecting $u$ and $v$ such that $\sigma(a_i) >b$, for $1\le i \le m$. Let $b_1, \dots, b_n \in E(G)$ be the remaining edges connecting $u$ and $v$ (possibly $n=0$), let $e_1,\dots,e_p$ be the edges incident to $u$, with $e_i$ connecting $u$ to $u_i \ne v$, for $1\le i \le p$ (possibly $p = 0$) and let $f_1,\dots,f_q$ be the edges incident to $v$, with $f_i$ connecting $v$ to $v_i \ne u$, for $1\le i \le q$ (possibly $q = 0$). Then, operation $B_b$ on $\{u,v\}$ takes $(G,\sigma)$ to the H\"older complex $(\tilde{G},\tilde \sigma)$, where $V(\tilde G) = (V(G) \setminus \{u,v\}) \sqcup \{w, w_1, \dots, w_n\}$ and
		\begin{equation*}
			E(\tilde G) = (E(G) \setminus \{a_1,\dots, a_m,b_1,\dots,b_n, e_1, \dots, e_p, f_1, \dots, f_q\}) \sqcup S
		\end{equation*}
		\begin{equation*}
			S=\{\tilde{b}_1,\tilde{b}'_1,\dots,\tilde{b}_n,\tilde{b}'_n, \tilde{e}_1, \dots, \tilde{e}_p, \tilde{f}_1, \dots, \tilde{f}_q\}
		\end{equation*}
		
		Here, $\tilde{b}_i, \tilde{b}'_i$ are the edges connecting $w$ to $w_i$ ($1\le i \le p$), $\tilde{e_i}$ is the edge connecting $w$ to $u_i$ ($1\le i \le p$) and $\tilde{f}_i$ is the edge connecting $w$ to $v_i$ ($1\le i \le q$). Moreover, $\tilde{\sigma}$ is defined as:
		$$\begin{cases}
			\tilde \sigma (e)=\sigma (e), & e \ne \tilde{b}_1,\tilde{b}'_1,\dots,\tilde{b}_n,\tilde{b}'_n,\tilde{e}_1,\dots,\tilde{e}_p;\tilde{f}_1,\dots,\tilde{f}_q  \\
			\tilde \sigma (\tilde{b}_i)=\tilde \sigma (\tilde{b}'_i)= \sigma (b_i), & 1 \le i \le n  \\
			\tilde \sigma (\tilde{e}_i)=\sigma (e_i), & 1 \le i \le p  \\
			\tilde \sigma (\tilde{f}_i)=\sigma (f_i), & 1 \le i \le q  \\
		\end{cases}$$
		\begin{figure}[h!]
			\centering
			\includegraphics[width=6in]{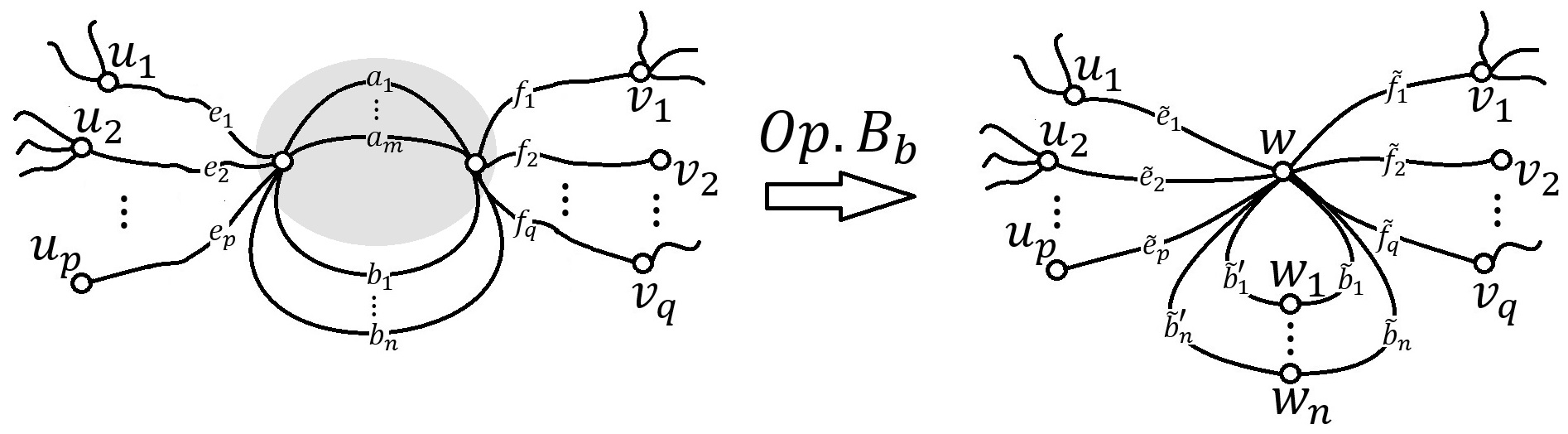}
			\caption{Diagram of Operation $B_b$. Edges inside the shaded disk represent H\"older triangles with exponent higher than $b$.}
		\end{figure}
		
	\end{itemize}
	In both operations, we say that $w$ is the {\bf merge of $u$ and $v$}. We also say that $(G,\sigma)$ is {\bf $b$-reduced} if it is not possible to perform a $b$-contraction on $(G,\sigma)$.
\end{Def}

\begin{remark}
	It is clear that each $b$-contraction $(\tilde{G},\tilde{\sigma})$ of a H\"older complex $(G,\sigma) \in HC_{\mathbb{F}}$ is in fact a well-defined abstract H\"older complex in $HC_{\mathbb{F}}$.
\end{remark}

\begin{Lem}\label{Lema-existência-reducao}
	Let $(G_0,\sigma_0) \in HC_{\mathbb{F}}$ be a canonical H\"older complex and $b \in \mathbb{F}$, $b \ge 1$. Then, there is $n\ge 0$ integer such that $(G_n,\sigma_n)$ is $b$-reduced and, for $i=1,\dots,n$, $(G_i,\sigma_i)$ is a $b$-contraction of $(G_{i-1},\sigma_{i-1})$.
\end{Lem}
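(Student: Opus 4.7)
The plan is to exhibit a single lexicographic monovariant that strictly decreases under both kinds of $b$-contractions, so that termination follows automatically from well-foundedness. Concretely, define
\[
\Phi(G,\sigma) := \bigl(N_{>b}(G,\sigma),\,|E(G)|\bigr) \in \mathbb{Z}_{\ge 0} \times \mathbb{Z}_{\ge 0},
\]
where $N_{>b}(G,\sigma) := \#\{e \in E(G) : \sigma(e) > b\}$, and equip the target with the lexicographic order. Since this is a well-order, any sequence of $b$-contractions along which $\Phi$ strictly decreases has length at most $\Phi(G_0,\sigma_0)$, and its final term $(G_n,\sigma_n)$ is by definition $b$-reduced.

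To verify the monovariant property under operation $B_b$, I would inspect Definition \ref{contractions} directly: the $m \ge 1$ deleted edges $a_i$ all satisfy $\sigma(a_i) > b$, while the newly inserted edges $\tilde{b}_i, \tilde{b}'_i$ have weights $\sigma(b_i) \le b$ and the relabelled edges $\tilde{e}_i, \tilde{f}_i$ inherit the weights of $e_i, f_i$. Hence $N_{>b}$ drops by exactly $m \ge 1$, and $\Phi$ strictly decreases in the first coordinate regardless of how $|E(G)|$ behaves. For operation $A_b$, the same bookkeeping shows that $\tilde{e}_i, \tilde{f}_i$ inherit the weights of $e_i, f_i$ and the removed cut-edge $\tilde{e}$ already satisfies $\sigma(\tilde{e}) \le b$, so $N_{>b}$ is unchanged; meanwhile $|E(G)|$ strictly drops by one, since one removes $\tilde{e}, e_1, \dots, e_p, f_1, \dots, f_q$ and adds back only $\tilde{e}_1, \dots, \tilde{e}_p, \tilde{f}_1, \dots, \tilde{f}_q$. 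Hence $\Phi$ strictly drops in the second coordinate.

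No step in this argument looks difficult; it is a purely combinatorial descent on a well-ordered set. The only point requiring care is the weight bookkeeping that checks, in both operations, that every new edge either is assigned a weight $\le b$ (the $\tilde{b}_i, \tilde{b}'_i$) or simply inherits the weight of a removed edge (the $\tilde{e}_i, \tilde{f}_i$); this is precisely what forces $N_{>b}$ to behave as a monovariant under $A_b$ and to strictly drop under $B_b$.
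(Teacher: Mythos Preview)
Your proof is correct and follows the same overall strategy as the paper: exhibit a well-founded invariant that strictly decreases under every $b$-contraction. The paper uses the single integer $f_A(G)+f_B(G)$, where $f_A$ counts cut-edges and $f_B=N_{>b}$, and simply asserts that this sum drops under either operation. Your choice of the lexicographic pair $(N_{>b},|E|)$ is a minor variant that is arguably cleaner: under $A_b$ it is immediate that $|E|$ drops by one while $N_{>b}$ is unchanged, and under $B_b$ that $N_{>b}$ drops by $m\ge 1$, whereas the paper's invariant requires the (unstated) check that $B_b$ cannot increase the number of cut-edges. Either monovariant works, and the bookkeeping you give is accurate.
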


\begin{proof}
	For each $(G,\sigma) \in HC_{\mathbb{F}}$, let $f_A(G)$ be the number of cut-edges of $G$ and $f_B(G)$ be the number of edges $e\in E(G)$ such that $\sigma(e)>b$. Notice first that, if $(\tilde G, \tilde \sigma)$ is any $b$-contraction of $(G,\sigma)$, then $f_A(\tilde G) +f_B(\tilde G) < f_A(G) +f_B(G)$. Since $f_A(G) +f_B(G)\ge 0$ for every $(G,\sigma) \in HC_{\mathbb{F}}$, we conclude that $n$ is finite.

\end{proof}

\begin{remark}\label{Rem:unicidade-b-contracao}
	The $b$-contraction obtained in Lemma \ref{Lema-existência-reducao} is unique. However, this fact will not be used in this paper. Hence, since its proof is quite long, we have chosen to omit this fact.
\end{remark}

\begin{Def}
	Let $(G,\sigma) \in HC_{\mathbb{F}}$ and $(G_n, \sigma_n)$ as in Lemma \ref{Lema-existência-reducao}. A {\bf $b$-reduced H\"older complex} of $(G,\sigma)$ is any H\"older complex isomorphic to $(G_n, \sigma_n)$.
\end{Def}

\begin{Prop}\label{reducao-invariante-homologia}
	Let $(G,\sigma) \in HC_{\mathbb{F}}$ and $(\tilde{G},\tilde{\sigma})$ a $b$-contraction of $(G,\sigma)$. If $X$ and $\tilde X$ are geometric H\"older complex germs corresponding to $(G,\sigma)$ and $(\tilde{G},\tilde{\sigma})$, respectively, then
	$$MDH_1^{b}(X,0,d_{inn}) \cong MDH_1^{b}(\tilde X,0,d_{inn})$$
\end{Prop}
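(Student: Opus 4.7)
I would treat the two types of $b$-contraction operations---operation $A_b$ on cut-edges and operation $B_b$ on multi-edge configurations---separately. In each case, the essential tools are Proposition \ref{Prop: inner MDH of Holder triangle} (every H\"older triangle is $b$-contractible in the inner metric) and the Mayer--Vietoris long exact sequence (Theorem \ref{Teo:MayerVietoris}) applied to suitable $b$-covers of $X$ and $\tilde X$.

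For operation $A_b$, let $X_u, X_v \subset X$ be the geometric sub-complexes corresponding to the two components of $G \setminus \tilde e$, so that $X = X_u \cup T(\tilde e) \cup X_v$ with $X_u \cap T(\tilde e) = \gamma_u$, $T(\tilde e) \cap X_v = \gamma_v$, and $X_u \cap X_v = \{0\}$. I would take the closed $b$-cover $\{U, V\}$ of $X$ with $U = X_u \cup T(\tilde e)$ and $V = X_v$, so that $U \cap V = \gamma_v$ is a single arc. By Proposition \ref{Prop: inner MDH of Holder triangle}, the triangle $T(\tilde e)$ is $b$-contractible, hence $U$ is $b$-homotopy equivalent to $X_u$. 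Since any connected geometric H\"older complex has $MDH_0^b = \mathbb{Z}$, the map $MDH_0^b(\gamma_v) \to MDH_0^b(X_u) \oplus MDH_0^b(X_v)$ is injective, and the Mayer--Vietoris sequence collapses to yield $MDH_1^b(X) \cong MDH_1^b(X_u) \oplus MDH_1^b(X_v)$. The same decomposition of $\tilde X$ along the single arc $\gamma_w$ gives the identical formula for $MDH_1^b(\tilde X)$, and the isomorphism follows.

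For operation $B_b$, let $Y = \bigcup_i T(a_i) \cup \bigcup_j T(b_j) \subset X$ be the sub-complex containing all edges between $u$ and $v$, and let $\tilde Y = \bigcup_j (T(\tilde b_j) \cup T(\tilde b'_j)) \subset \tilde X$ be the corresponding union of bubbles. Let $Z$ and $\tilde Z$ denote the remainders of $X$ and $\tilde X$, which are canonically isomorphic as metric germs because the operation only modifies edges local to $\{u, v\}$. Applying Mayer--Vietoris to $X = Y \cup Z$ (with $Y \cap Z = \gamma_u \cup \gamma_v$) and to $\tilde X = \tilde Y \cup \tilde Z$ (with $\tilde Y \cap \tilde Z = \gamma_w$), the proposition reduces to the local isomorphism $MDH_1^b(Y) \cong MDH_1^b(\tilde Y)$. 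The geometric intuition is that the triangles $T(a_i)$ with $\sigma(a_i) > b$ enforce $\gamma_u \sim_b \gamma_v$ in the inner metric of $Y$ (by Remark \ref{Rem: b-equiv and tord of arcs}), playing the role of the vertex-merge $u, v \mapsto w$ in $\tilde Y$, while each $b_j$-triangle in $Y$ pairs with any $a_i$-triangle to form a $1$-cycle corresponding precisely to the cycle contributed by the bubble $T(\tilde b_j) \cup T(\tilde b'_j)$ in $\tilde Y$. The main obstacle is this local computation: one must explicitly verify the bijection between $1$-cycles in the mixed-exponent book $Y$ and $1$-cycles in the union of bubbles $\tilde Y$ by tracking $b$-equivalence classes of arcs at the chain level (Definitions \ref{Def: b-equiv of simplexes} and \ref{Def: b-equiv of simplexes2}) and choosing the $b$-covers with appropriate $b$-extensions (Definition \ref{def:b-cover}) so that the ambient $b$-equivalence is correctly captured by the Mayer--Vietoris sequences on both sides.
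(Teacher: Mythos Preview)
Your treatment of operation $A_b$ is essentially the paper's own argument: you split along the boundary arc $\gamma_v$ of $T(\tilde e)$, while the paper splits along a generic interior arc $\gamma\in G(T(\tilde e))$ and uses inner horn neighborhoods to make the $b$-cover conditions transparent, then concludes via the Five Lemma. The differences are cosmetic.

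Your treatment of operation $B_b$, however, has a genuine gap. The assertion that the remainders $Z$ and $\tilde Z$ are ``canonically isomorphic as metric germs because the operation only modifies edges local to $\{u,v\}$'' is false. Operation $B_b$ does not leave the edges $e_1,\dots,e_p,f_1,\dots,f_q$ untouched: it re-anchors them from the two distinct vertices $u,v$ to the single merged vertex $w$. Consequently $Z$ realizes the subgraph of $G$ in which the $e_i$ meet at $\gamma_u$ and the $f_j$ meet at $\gamma_v$, whereas $\tilde Z$ realizes a subgraph in which all of them meet at $\gamma_w$. These are non-isomorphic H\"older complexes; by Theorem~\ref{teo:inner-equiv-local} the germs are not inner lipeomorphic, and with their intrinsic inner metrics they can even have different numbers of connected components. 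Your reduction to the local comparison $MDH_1^b(Y)\cong MDH_1^b(\tilde Y)$ therefore does not go through: the very phenomenon you are trying to control---the $b$-identification of $\gamma_u$ with $\gamma_v$---affects $Z$ just as much as it affects $Y$, so showing $MDH_\bullet^b(Z)\cong MDH_\bullet^b(\tilde Z)$ is not a free step but essentially the same problem you started with.

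The paper sidesteps this entirely by abandoning Mayer--Vietoris for Case~2. Since $\sigma(a_i)>b$, the arcs $\gamma_0=\psi(Cu)$ and $\gamma_1=\psi(Cv)$ satisfy $\gamma_0\sim_b\gamma_1$ in $(X,d_{inn})$. The paper then excises all the triangles $T(a_l)$ together with thin $M$-H\"older slivers (with $M>\max_e\sigma(e)$) of the $T(b_i)$ and $T(e_j)$ near $\gamma_0$, and re-glues matching thin triangles along $\gamma_1$, producing a germ $X''$ with $X\sim_b X'\sim_b X''$. By construction the canonical H\"older complex of $X''$ is $(\tilde G,\tilde\sigma)$, so $MDH_1^b(X)\cong MDH_1^b(X'')\cong MDH_1^b(\tilde X)$ by Theorem~\ref{teo:inner-equiv-local}. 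This direct $b$-equivalence is the missing idea in your plan; once you have it, there is no need to localize with Mayer--Vietoris at all.
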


\begin{proof}
	Let $\psi, \tilde{\psi}$ be presentation maps for $(G,\sigma)$ and $(\tilde{G},\tilde{\sigma})$, respectively. In what follows, we consider the same elements and notations as in Definition \ref{contractions}, for example, $\tilde e$ is the cut-edge in Operation $A_b$, $a_1,\dots,a_m$ are the edges connecting $u$ and $v$ with exponent higher than $b$ in Operation $B_b$, $w$ is the merge of $u$ and $v$ and so on. Let also $\gamma_0=\psi(Cu)$, $\gamma_1=\psi(Cv)$, $\tilde{\gamma}=\tilde{\psi}(Cw)$. We have two cases to consider:
	
	{\bf Case 1:} if $(\tilde{G},\tilde{\sigma})$ was obtained from $(G,\sigma)$ from operation $A_b$ on $\{u,v\}$, then let $\gamma \in G(T(\tilde e))$ be a generic arc, $G_0 \subset G\setminus \{\tilde e\}$ be the connected component containing $u$ and $G_1 = (G\setminus\{\tilde e\})\setminus G_0$ (we have $v \in G_1$, because $\tilde e$ is a cut-edge). Choose $\eta>0$ such that, for any $t>0$ small enough, the length of the curves $T(\gamma_0,\gamma)_t, T(\gamma_1,\gamma)_t$ are greater than $\eta t^{\sigma(\tilde e)}$ and the length of each curve $(T(e))_t$ is greater than $\eta t^{\sigma(e)}$,	for each $e \in E(G)\cup E(\tilde{G})$. If $\tilde{E}=\{e_1,\dots,e_p,f_1,\dots,f_q\}$, define
	\begin{equation*}
		U=X\cap \mathcal{H}_{b,\eta,d_{inn}}\left(\left(\bigcup_{e \in E(G_0)}T(e) \right) \cup T(\gamma_0,\gamma) \right)
	\end{equation*}
	\begin{equation*}
		V=X\cap \mathcal{H}_{b,\eta,d_{inn}}\left(\left(\bigcup_{e \in E(G_1)}T(e) \right) \cup T(\gamma_1,\gamma) \right)
	\end{equation*}
	\begin{equation*}
		\tilde U=\tilde{X} \cap \mathcal{H}_{b,\eta,d_{inn}}\left(\bigcup_{e \in (E(\tilde{G})\setminus \tilde{E})\cup\{e_1,\dots,e_p\}}T(e) \right)
	\end{equation*}
	\begin{equation*}
	\tilde V=\tilde{X} \cap\mathcal{H}_{b,\eta,d_{inn}}\left(\bigcup_{e \in (E(\tilde{G})\setminus \tilde{E})\cup\{f_1,\dots,f_q\}}T(e) \right)
	\end{equation*}
	Since $\tilde e$ is a cut-edge and $\sigma(\tilde e) \le b$, by the choice of $\eta$ we have $U \cap V = X \cap \mathcal{H}_{b,\eta,d_{inn}}(\gamma)$, which is a $b$-H\"older triangle and therefore $b$-contractible, by Proposition \ref{Prop: inner MDH of Holder triangle}. Analogously, $\tilde U \cap \tilde V = \tilde X \cap \mathcal{H}_{b,\eta, d_{inn}}(\tilde \gamma)$ is $b$-equivalent to a finite union of $b$-H\"older triangles, where any two of them have intersection equal to $\tilde \gamma$. This implies that $\tilde \gamma$ is a $b$-deformation retract of $\tilde U \cap \tilde V$ and therefore
	\begin{equation*}
		MDH_1^b(U\cap V, d_{inn}) \cong MDH_1^b(\tilde U\cap \tilde V, d_{inn}) \cong \{0\}
	\end{equation*}
	\begin{equation*}
		MDH_0^b(U\cap V,d_{inn}) \cong MDH_0^b(\tilde U\cap \tilde V, d_{inn})\cong \mathbb{Z}
	\end{equation*}	
	We also have that $U_0=\bigcup_{e \in E(G_0)}T(e)$ is a $b$-deformation retract of $U$, $V_0=\bigcup_{e \in E(G_1)}T(e)$ is a $b$-deformation retract of $V$, $\tilde U_0 = \bigcup_{e \in (E(G_0)\setminus\{e_1, \ldots,e_p\})\cup\{\tilde e_1, \ldots, \tilde e_p\}}T(e)$ is a $b$-deformation retract of $U$ and $\tilde V_0 = \bigcup_{e \in (E(G_1)\setminus\{f_1, \ldots,f_q\})\cup\{\tilde f_1, \ldots, \tilde f_q\}}T(e)$ is a $b$-deformation retract of $V$. By Theorem \ref{teo:inner-equiv-local}, $U_0$ is inner lipeomorphic to $\tilde{U}_0$ and $V_0$ is inner lipeomorphic to $\tilde{V}_0$. Therefore
	$$MDH_m^b(U, d_{inn}) \cong MDH_m^b(U_0, d_{inn}) \cong MDH_m^b(\tilde U_0, d_{inn}) \cong MDH_m^b(\tilde{U}, d_{inn})$$
	$$MDH_m^b(V, d_{inn}) \cong MDH_m^b(V_0, d_{inn}) \cong MDH_m^b(\tilde V_0,  d_{inn}) \cong MDH_m^b(\tilde{V}, d_{inn})$$
	
	\begin{figure}[h!]
		\centering
		\includegraphics[width=6in]{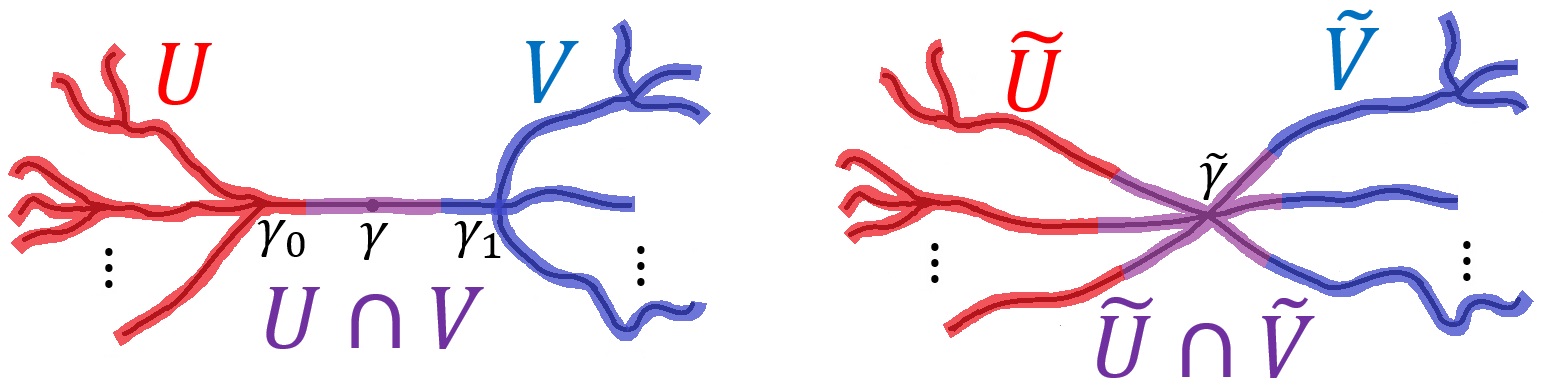}
		\caption{Proof of Proposition \ref{reducao-invariante-homologia}, Case 1.}
	\end{figure}
	
	By Theorem \ref{Teo:MayerVietoris}, we have the following Mayer-Vietoris exact sequences:
	$$\cdots \to \{0\} \to MDH_1^{b}(U) \oplus MDH_1^{b}(V) \to MDH_1^{b}(X) \to MDH_{0}^{b}(U\cap V) \to \mathbb{Z}\oplus \mathbb{Z} \to \cdots $$
	$$\cdots \to \{0\} \to MDH_1^{b}(\tilde U) \oplus MDH_1^{b}(\tilde V) \to MDH_1^{b}(\tilde X) \to MDH_{0}^{b}(\tilde U\cap \tilde V) \to \mathbb{Z}\oplus \mathbb{Z} \to \cdots $$
	
	By the Five Lemma, we conclude that $MDH_1^{b}(X) \cong MDH_1^{b}(\tilde X)$.
	
	{\bf Case 2:} if $(\tilde{G},\tilde{\sigma})$ was obtained from $(G,\sigma)$ from operation $B_b$ on $\{u,v\}$, then consider $M> {\rm max}\{\sigma(e) \mid e \in E(G)\}$ and for each $1\le i \le n$, $1\le j \le p$, let $\gamma_{b_i} \in V(T(b_i))$, $\gamma_{e_j} \in V(T(e_j))$ be arcs such that
	$itord(\gamma_0,\gamma_{b_i})=itord(\gamma_0,\gamma_{e_j})=M$. Let also 
	$$X^{\prime}=X\setminus\left(\left(\bigcup_{i=1}^{n}T(\gamma_0,\gamma_{b_i})\right)\cup \left(\bigcup_{j=1}^{p}T(\gamma_0,\gamma_{e_j})\right)\cup \left(\bigcup_{l=1}^{m}T(a_l)\right)\right)$$
	
	Since $\gamma_0 \sim_{b} \gamma_1 \sim_{b} \gamma_{b_i} \sim_{b} \gamma_{e_j}$, we have $X \sim_{b} X^{\prime}$. Moreover, if $\tilde T_{b_i} = T(\gamma_{b_i},\gamma_1)$, $\tilde T_{e_j} = T(\gamma_{e_j},\gamma_1)$ are H\"older triangles with exponent higher than $M$ (by Remark \ref{Rem:realization-complex}, such triangles exists if $X^{\prime}$ is embedded in $\R^N$, for $N$ big enough), and if $X^{\prime \prime}=X^{\prime}\cup ((\cup_{i=1}^{n}\tilde T_{b_i})\cup (\cup_{j=1}^{p}\tilde T_{e_j}))$, then $X^{\prime} \sim_b X^{\prime \prime}$. But the canonical H\"older complexes associated with $X^{\prime \prime}$ is isomorphic to $(\tilde G, \tilde \sigma)$ by construction, because $\gamma_1$ corresponds to $w$ and the H\"older triangles $\tilde T_{b_i}$, $\tilde T_{e_j}$ have exponent higher than any adjacent H\"older triangle. Hence $MDH_1^{b}(X) \cong MDH_1^{b}(X^{\prime \prime}) \cong MDH_1^{b}(\tilde X)$.
	
	\begin{figure}[h!]
		\centering
		\includegraphics[width=5in]{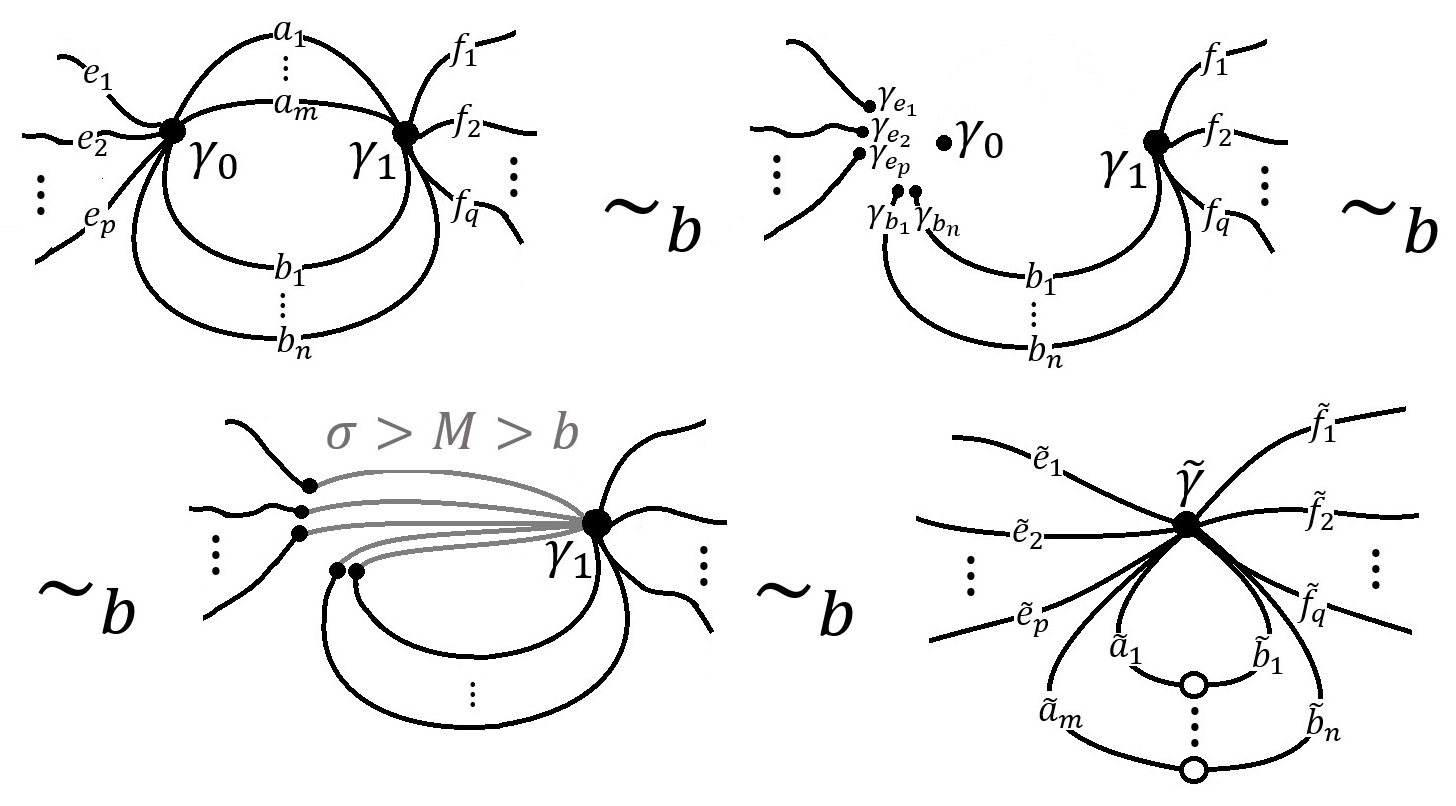}
		\caption{Proof of Proposition \ref{reducao-invariante-homologia}, Case 2.}
	\end{figure}
\end{proof}

\begin{Teo}[MD-Homology of Degree 1, Inner Metric]\label{Teo: formula for inner HMD}
	Let $(X,0) \subset (\mathbb{R}^n,0)$ be a surface germ and let $(G,\sigma)$ its canonical H\"older complex. For $b \in \mathbb{F}$, $b \ge 1$, let $(G_b,\sigma_b)$ be a $b$-reduced H\"older complex of $(G,\sigma)$. If $G_b$ has $k$ connected components, $v$ vertices and $e$ edges, then:
	$$MDH_1^{b}(X,0,d_{inn},\mathbb{Z}) \cong \mathbb{Z}^{e-v+k}$$
\end{Teo}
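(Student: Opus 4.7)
The plan is to first reduce, via Lemma \ref{Lema-existência-reducao} and iterated applications of Proposition \ref{reducao-invariante-homologia}, to computing $MDH_1^b(\tilde X,0,d_{inn})$ where $\tilde X$ is a geometric realization of the $b$-reduced complex $(G_b,\sigma_b)$. I will then prove $MDH_1^b(\tilde X,0,d_{inn}) \cong \Z^{e-v+k}$ by induction on $e$. The base case $e=0$ is immediate: $G_b$ is a disjoint union of $v=k$ isolated vertices, $\tilde X$ is a disjoint union of arcs, and both sides equal $0$.

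For the inductive step, note that $b$-reducedness forces every edge of $G_b$ to satisfy $\sigma_b \le b$ (otherwise operation $B_b$ would apply with $m\ge 1$) and forbids every cut-edge (otherwise operation $A_b$ would apply); in particular every edge lies in a cycle. Pick $e_0 \in E(G_b)$ and set $G^* := G_b \setminus \{e_0\}$ with the restricted exponent function. Then $G^*$ has $e-1$ edges, $v$ vertices, and still $k$ components. The complex $(G^*,\sigma_b|_{E(G^*)})$ need not be $b$-reduced, but since all its exponents are $\le b$, only operation $A_b$ can occur in its $b$-reduction, and a direct check shows that $A_b$ takes $(v,e,k) \mapsto (v-1,e-1,k)$, so it preserves the first Betti number $e-v+k$. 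Therefore the $b$-reduction of $(G^*,\sigma_b|_{E(G^*)})$ has Betti number $e-v+k-1$, and by Proposition \ref{reducao-invariante-homologia} together with the induction hypothesis, $MDH_1^b(\tilde X^*,0,d_{inn}) \cong \Z^{e-v+k-1}$, where $\tilde X^*$ is the geometric realization of $(G^*,\sigma_b|_{E(G^*)})$.

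Next I will apply Theorem \ref{Teo:MayerVietoris}. Mimicking the horn-neighborhood construction in the proof of Proposition \ref{reducao-invariante-homologia} (Case 1), I choose $\eta>0$ sufficiently small and set $U = \tilde X \cap \mathcal{H}_{b,\eta,d_{inn}}(T(e_0))$, $V = \tilde X \cap \mathcal{H}_{b,\eta,d_{inn}}(\tilde X^*)$. Then $\{U,V\}$ is a closed $b$-cover of $\tilde X$, with $U \sim_b T(e_0)$, $V \sim_b \tilde X^*$, and $U\cap V \sim_b \gamma_0 \cup \gamma_1$ where $\gamma_0,\gamma_1$ are the boundary arcs of $T(e_0)$. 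By Proposition \ref{Prop: inner MDH of Holder triangle}, $MDH_1^b(U) = 0$ and $MDH_0^b(U) \cong \Z$; since $\sigma_b(e_0) \le b$, the arcs $\gamma_0,\gamma_1$ are not $b$-equivalent, so $MDH_0^b(U \cap V) \cong \Z^2$ and $MDH_1^b(U\cap V) = 0$; finally, since the components of $\tilde X^*$ are inner-disconnected, $MDH_0^b(V) \cong \Z^k$. The crucial point is that $e_0$ lies in a cycle, so $\gamma_0$ and $\gamma_1$ remain in the same component of $\tilde X^*$, and the Mayer-Vietoris connecting map $f\colon MDH_0^b(U\cap V) \to MDH_0^b(U)\oplus MDH_0^b(V)$ sends $[\gamma_0]$ and $[\gamma_1]$ to the same element, giving $\ker(f) \cong \Z$. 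The long exact sequence then reduces to a split short exact sequence
\begin{equation*}
0 \longrightarrow MDH_1^b(V) \longrightarrow MDH_1^b(\tilde X) \longrightarrow \ker(f) \longrightarrow 0,
\end{equation*}
whence $MDH_1^b(\tilde X) \cong \Z^{e-v+k-1} \oplus \Z \cong \Z^{e-v+k}$, closing the induction.

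The main obstacle is the verification that $\{U,V\}$ fulfills Definition \ref{def:b-cover} for suitable $\eta$, namely producing subanalytic $b$-extensions together with the required $b$-deformation retractions onto $T(e_0)$, $\tilde X^*$, and $\gamma_0\cup \gamma_1$; this is a direct adaptation of the horn-neighborhood machinery already developed for Proposition \ref{reducao-invariante-homologia}, but some care is needed to ensure that the horn neighborhoods of $T(e_0)$ and of $\tilde X^*$ only overlap in a neighborhood of $\gamma_0\cup\gamma_1$.
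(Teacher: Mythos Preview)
Your proposal is correct and follows essentially the same route as the paper: reduce to a geometric realization of the $b$-reduced complex, observe that $b$-reducedness forces $\sigma_b\le b$ on every edge and forbids cut-edges, then remove an edge lying in a cycle and run Mayer--Vietoris with a horn-neighborhood cover. The only cosmetic differences are that the paper inducts on the Betti number $e-v+k$ rather than on $e$, declares $\{U,V\}$ an \emph{open} $b$-cover (your horn neighborhoods are open, not closed), and finishes by dimension-counting along the long exact sequence instead of isolating $\ker(f)$ as a split $\Z$-quotient; none of these affect the argument.
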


\begin{proof}
	Notice first that if $G_b$ is a union of connected graphs $G_1,\ldots,G_k$, with $G_i$ having $v_i$ vertices and $e_i$ edges, for each $1\le i \le k$, then $e_i -v_i+1\ge 0$, with equality if, and only if, $G_i$ is a tree. Since $v=v_1+\dots+v_k$ and $e=e_1+\dots+e_k$, we have $e-v+k=\sum_{i=1}^{k} (e_i-v_i+1) \ge 0$, with equality if, and only if, $G_b$ is a union of trees. In particular, equality occurs if, and only if, $G_b$ does not have any cycle.
	
	We will prove the theorem by induction on $e-v+k$. For the base case $e-v+k=0$, $G_b$ is a union of trees, which implies that each edge of $G_b$ is a cut-edge. Since $(G_b,\sigma_b)$ is $b$-reduced, each edge $\tilde e$ of $G_b$ satisfies $\sigma_b(\tilde e)\le b$, and therefore this edge cannot be a cut-edge (otherwise operation $A_b$ can be done). Then $G_b$ is a union of isolated vertices and therefore $MDH_1^{b}(X,0,d_{inn},\mathbb{Z}) \cong \{0\} \cong \mathbb{Z}^{e-v+k}$.
	
	For the inductive step, let $\psi$ be a presentation map for $(G_b,\sigma_b)$. Since $e-v+k>0$, $G_b$ has a cycle (otherwise, $G_b$ is a union of trees, and this is the base case), and therefore there is an edge $e_0 \in E(G_b)$ in a cycle of $G_b$. Suppose $T(e_0)=T(\gamma, \gamma')$. Consider $\eta>0$ such that, for any edge $\tilde e \in E(G_b)$ and $t>0$ small enough, the length of the curve $T(\tilde e)_t$ is bigger than $2\eta t^b$ ($\eta$ exists, since $\sigma_b(\tilde e)\le b$). Define
	\begin{equation*}
	U=X\cap \mathcal{H}_{b,\eta,inn}\left( X \setminus T(e_0) \right) \; ; \; V=X\cap \mathcal{H}_{b,\eta,inn}\left(T(e_0) \right).
	\end{equation*}
	We have that $\{U,V\}$ is a open $b$-cover of $X$. By the choice of $\eta$, $\overline{U\cap V}$ have $\gamma\cup \gamma'$ as a $b$-deformation retract, $\overline{U}$ has $X\setminus T(e_0)$ as a $b$-deformation retract and $\overline{V}$ have $\gamma$ (or $\gamma'$) as a $b$-deformation retract. Thus, for every $m\ge 0$ 
	\begin{equation*}
		MDH_{m}^{b}(U,0,d_{inn},\mathbb{Z}) \cong MDH_{m}^{b}(\psi(C(G_b\setminus \{e_0\}))),0,d_{inn},\mathbb{Z}),
	\end{equation*}
	\begin{equation*}
		MDH_{m}^{b}(V,0,d_{inn},\mathbb{Z}) \cong MDH_{m}^{b}(T(e_0),0,d_{inn},\mathbb{Z})\cong MDH_{m}^{b}(\gamma,0,d_{inn},\mathbb{Z}),
	\end{equation*}
	\begin{equation*}
		MDH_{m}^{b}(U\cap V,0,d_{inn},\mathbb{Z}) \cong MDH_{m}^{b}(\gamma,0,d_{inn},\mathbb{Z})\oplus MDH_{m}^{b}(\gamma^{\prime},0,d_{inn},\mathbb{Z}).
	\end{equation*}
	On the other hand, by Theorem \ref{Teo:MayerVietoris}, we have the following Mayer-Vietoris exact sequence:
	\begin{equation*}
		\cdots \to MDH_{1}^{b}(U\cap V) \to MDH_1^{b}(U) \oplus MDH_1^{b}(V) \to MDH_1^{b}(X) \to 
	\end{equation*}
		\begin{equation*}
		\to MDH_{0}^{b}(U\cap V) \to MDH_{0}^{b}(U) \oplus MDH_{0}^{b}(V) \to MDH_0^{b}(X) \to 0.
	\end{equation*}	
	By induction hypothesis, we have $MDH_1^{b}(U) \cong \mathbb{Z}^{(e-1)-v+k}$, because operation $A_b$ leaves $(e-v+k)$ invariant and $G_b\setminus \{ e_0\}$ does not have any edge $e$ such that $\sigma(e)>b$, otherwise operation $B_b$ would be applicable in $G_b\setminus \{ e_0\}$, a contradiction. Since $U\cap V$ and $V$ have the same MD-Homology of arcs, we also have $MDH_1^{b}(U\cap V) \cong MDH_1^{b}(V) \cong \{0\}$,  $MDH_0^{b}(V) \cong \mathbb{Z}$ and $MDH_0^{b}(U\cap V) \cong \mathbb{Z} \oplus \mathbb{Z} \cong \mathbb{Z}^2$. Since $G_b$ and $G_b\setminus \{ e_0\}$ have $k$ connected components, we finally have $MDH_0^{b}(U)\cong MDH_0^{b}(X) \cong \mathbb{Z}^k$. Thus, the exact sequence reduces to
	\begin{equation*}
		\cdots \to 0 \stackrel{f_0}{\rightarrow} \mathbb{Z}^{e-v+k-1} \stackrel{f_1}{\rightarrow} MDH_1^{b}(X)  \stackrel{f_2}{\rightarrow} \mathbb{Z}^2 \stackrel{f_3}{\rightarrow} \mathbb{Z}^{k+1}  \stackrel{f_4}{\rightarrow} \mathbb{Z}^k \to 0.
	\end{equation*}
	Since ${\rm dim}({\rm ker}(f_1))={\rm dim}({\rm im}(f_0))=0$, if $MDH_1^{b}(X) \cong \mathbb{Z}^N$, for some $N \in \mathbb{Z}_{\ge 0}$, by exactness of the sequence we obtain
	\begin{equation*}
		{\rm dim}({\rm im}(f_4))=k \Rightarrow {\rm dim}({\rm im}(f_3))={\rm dim}({\rm ker}(f_4))=(k+1)-k=1 =1 \Rightarrow
	\end{equation*}
	\begin{equation*}
		{\rm dim}({\rm im}(f_2))= {\rm dim}({\rm ker}(f_3))=2-1=1  \Rightarrow {\rm dim}({\rm im}(f_1))={\rm dim}({\rm ker}(f_2))=N-1 \Rightarrow 
	\end{equation*}
		\begin{equation*}
		e-v+k-1={\rm dim}({\rm ker}(f_0))={\rm dim}({\rm im}(f_1))=N-1\Rightarrow N=e-v+k.
	\end{equation*}
	Therefore, $MDH_1^{b}(X,0,d_{inn},\mathbb{Z}) \cong \mathbb{Z}^{e-v+k}$ and the result follows by induction.
	
\end{proof}

\section{MD-Homology for Surfaces Germs, Degree 1 (Outer Metric)}\label{sec:Examples-MDH-Outer}

In this section, we begin the study of the MD-Homology of surfaces, for the outer metric. We begin this section by proving the Gluing Triangles Lemma, then applying it to determine the MD-Homology for bubble snakes and horns. Even in this simple case, the Realization Theorem \ref{Teo: realization for MD-Homology} shows that the MD-Homology can be as complicated as we want.

\begin{Lem}\label{Lem: gluing-triangles}
	Let $\beta \in (0,+\infty)$, $\tilde{\beta} \in (\beta,+\infty)$ and let $X=T(\gamma_0,\gamma_1)$, $T=T(\gamma_1,\gamma_2)$ be H\"older triangles such that $T$ is a LNE $\beta$-H\"older triangle and
	\begin{enumerate}
		\item[(i)] for every arc $\gamma \subset T$ such that $tord(\gamma, \gamma_1)=\beta$, we have $tord(\gamma,X) = {\rm min}\{\tilde \beta, tord(\gamma, \gamma_2)\}$ (in particular, $tord(\gamma_2,X)=\tilde \beta$);
		\item[(ii)] for every arc $\gamma \subset T$ such that $tord(\gamma, \gamma_2)=\beta$, we have $tord(\gamma,X) = tord(\gamma, \gamma_1)$ (in particular, $tord(\gamma,X)=\beta$, for every $\gamma \in G(T)$).
	\end{enumerate}
	Then we have the following isomorphism of MD-Homology:
	$$MDH_1^{b}(X\cup T,0,d_{out})\cong 
	\begin{cases}
		MDH_1^{b}(X,0,d_{out}), & b< \beta  \\
		MDH_1^{b}(X,0,d_{out}) \oplus \mathbb{Z}, & \beta \le b < \tilde{\beta}  \\
		MDH_1^{b}(X,0,d_{out}), & \tilde{\beta} \le b \\
	\end{cases}$$
\end{Lem}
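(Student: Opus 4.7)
I would apply the Mayer-Vietoris long exact sequence of Theorem~\ref{Teo:MayerVietoris} to a closed $b$-cover $\{U,V\}$ of $X\cup T$ defined, for a sufficiently small $\eta>0$, by
$$U := (X\cup T)\cap\overline{\mathcal{H}_{b,\eta}(X)},\qquad V := (X\cup T)\cap\overline{\mathcal{H}_{b,\eta}(T)}.$$
For $\eta$ small enough, $X$ is a $b$-deformation retract of $U$ and $T$ is a $b$-deformation retract of $V$. Since $T$ is LNE, Corollary~\ref{Cor: MDH of a NE Holder triangle} gives $MDH_1^b(V,d_{out})=0$; moreover, the $0$-th MD-homology of any H\"older triangle is $\mathbb{Z}$, because any two arcs of a H\"older triangle are joined by a l.v.a.\ $1$-simplex inside it, so $MDH_0^b(U)=MDH_0^b(V)=MDH_0^b(X\cup T)=\mathbb{Z}$. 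The entire argument then reduces to analyzing $U\cap V$ in each of the three regimes of $b$ by using (i) and (ii).

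If $b<\beta$, every pair of arcs of $T$ has outer tangency at least $\beta>b$ (since $T$ is LNE of exponent $\beta$), so all of $T$ is $b$-equivalent to $\gamma_1\subset X$, and $X\cup T$ in fact $b$-deformation retracts onto $X$, yielding $MDH_1^b(X\cup T)\cong MDH_1^b(X)$ at once. If $b\ge\tilde\beta$, then $tord(\gamma_2,X)=\tilde\beta\le b$, so $\gamma_2$ is not $b$-equivalent to any arc of $X$, and by (i) the same holds for every arc of $T$ close to $\gamma_2$; only arcs of $T$ near $\gamma_1$ then remain in $U\cap V$, so $U\cap V$ $b$-retracts onto $\gamma_1$ and Mayer-Vietoris again gives $MDH_1^b(X\cup T)\cong MDH_1^b(X)$.

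The intermediate regime $\beta\le b<\tilde\beta$ is where the new generator appears. Hypothesis (i), together with $\tilde\beta>b$, shows that $\gamma_2$ (and every arc of $T$ sufficiently close to $\gamma_2$) is $b$-equivalent to some arc $\gamma_2^*\subset X$; hypothesis (ii) forbids any arc of $T$ with $tord(\gamma,\gamma_2)=\beta$ from being $b$-close to $X$ unless $tord(\gamma,\gamma_1)>b$. Thus the set of arcs of $T$ which are $b$-close to $X$ splits cleanly into two $b$-equivalence classes, one collapsing onto $\gamma_1$ and the other onto $\gamma_2$, which are mutually non-$b$-equivalent since $tord(\gamma_1,\gamma_2)=\beta\le b$. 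Consequently $U\cap V$ $b$-deformation retracts onto $\gamma_1\sqcup\gamma_2$, giving $MDH_1^b(U\cap V)=0$ and $MDH_0^b(U\cap V)\cong\mathbb{Z}^2$. The Mayer-Vietoris sequence then reduces to
$$0\to MDH_1^b(X)\to MDH_1^b(X\cup T)\to\mathbb{Z}^2\xrightarrow{\psi}\mathbb{Z}\oplus\mathbb{Z}\to\mathbb{Z}\to 0,$$
and $\psi$ sends both generators $[\gamma_1]$ and $[\gamma_2]$ of $\mathbb{Z}^2$ to the same element of $\mathbb{Z}\oplus\mathbb{Z}$, since the classes of $\gamma_1$ and $\gamma_2^*$ coincide with the unique generator of $MDH_0^b(U)=\mathbb{Z}$, and those of $\gamma_1$ and $\gamma_2$ with that of $MDH_0^b(V)=\mathbb{Z}$. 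Hence $\ker\psi\cong\mathbb{Z}$, and the resulting short exact sequence $0\to MDH_1^b(X)\to MDH_1^b(X\cup T)\to\mathbb{Z}\to 0$ splits (because $\mathbb{Z}$ is free), producing $MDH_1^b(X\cup T)\cong MDH_1^b(X)\oplus\mathbb{Z}$.

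The main obstacle is precisely the intermediate case: one must carefully verify that $U\cap V$ really has exactly two $b$-equivalence classes of arcs, with no hidden bridge between the $\gamma_1$- and $\gamma_2$-sides of $T$. The two tangency conditions (i) and (ii) are designed exactly to rule out such bridges, and they also force the new generator of $MDH_1^b$ to correspond geometrically to the cycle obtained by traversing $T$ from $\gamma_1$ to $\gamma_2$ and returning through $X$ via $\gamma_2^*$.
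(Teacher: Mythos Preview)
Your proposal is correct and follows essentially the same strategy as the paper: apply Mayer--Vietoris to the $b$-cover $\{X,T\}$ (you make the thickening by horn neighborhoods explicit, while the paper writes ``$\{X,T\}$ is a $b$-cover'' directly), use conditions (i) and (ii) to show that the intersection is $b$-equivalent to $\gamma_1$ or to $\gamma_1\cup\gamma_2$ according to the regime of $b$, and read off the result. One small improvement in your write-up: in the middle regime you extract the extra $\mathbb{Z}$ via the short exact sequence and splitting over a free quotient, whereas the paper does a rank count after writing $MDH_1^b(X)\cong\mathbb{Z}^k$ and $MDH_1^b(X\cup T)\cong\mathbb{Z}^l$, which tacitly assumes these groups are free; your argument avoids that assumption and matches the lemma's stated conclusion $MDH_1^b(X)\oplus\mathbb{Z}$ more directly.
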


\begin{proof}
	Consider first the case $b < \beta$. For every arc $\gamma \subset T$ we have $tord (\gamma_1, \gamma) \ge \beta >b$. This implies that, if $\sigma_T$, $\sigma_{\gamma_1}$ are simplexes whose images germs are respectively $T, \gamma_1$, then $\sigma_T \sim_b \sigma_{\gamma_1}$. Therefore, $MDH_1^{b}(X\cup T)\cong MDH_1^{b}(X\cup \gamma_1) \cong MDH_1^{b}(X).$
	
	For the case $\beta \le b$, given two $b$-equivalent arcs $\tilde{\gamma} \subset X$, $\gamma \subset T$, by the $b$-equivalence we have $\beta \le b < tord(\tilde \gamma, \gamma)$. Then, $\gamma \notin G(T)$, otherwise condition (ii) would imply $tord(\tilde \gamma, \gamma) \le tord(\gamma,X)=\beta$. If $tord(\gamma, \gamma_1) = \beta$, then by condition (i) we have
	\begin{equation}\label{eq:1}
		b < tord(\gamma, \tilde \gamma) \le tord(\gamma, X) \le tord(\gamma, \gamma_2).
	\end{equation}
	If $tord(\gamma, \gamma_2) = \beta$, then by condition (ii) we have
	\begin{equation}\label{eq:2}
		b< tord(\gamma, \tilde \gamma) \le tord(\gamma,X) = tord(\gamma, \gamma_1).
	\end{equation}
	Therefore, $\gamma \sim_b \gamma_1$ or $\gamma \sim_b \gamma_2$. For $b< \tilde \beta$, since $tord(\gamma_2, X)=\tilde \beta$, we conclude that $\{X,T\}$ is a $b$-cover of $X\cup T$, with $X \cap T$ being $b$-equivalent to $\gamma_1 \cup \gamma_2$. For $\tilde{\beta} \le b$, if $tord(\gamma, \gamma_1) = \beta$, then by Equation \ref{eq:1} we have 
	$\tilde \beta \le b < tord(\gamma, \gamma_2),$ a contradiction with $tord(\gamma, \gamma_2) = \tilde \beta$. Therefore, $tord(\gamma, \gamma_1)>\beta$, which implies $tord(\gamma, \gamma_2)=\beta$ and then $\gamma \sim_{b} \gamma_1$. Thus, $\{X,T\}$ is a $b$-cover of $X\cup T$, with $X \cap T$ being $b$-equivalent to $\gamma_1$.
	
	
	By Theorem \ref{Teo:MayerVietoris} we have the following exact sequence:
	\begin{equation*}
		\cdots \to MDH_{1}^{b}(X\cap T) \to MDH_1^{b}(X) \oplus MDH_1^{b}(T) \to MDH_1^{b}(X\cup T) \to
	\end{equation*}
	\begin{equation*}
		 MDH_{0}^{b}(X \cap T) \to MDH_{0}^{b}(X) \oplus MDH_{0}^{b}(T) \to MDH_0^{b}(X\cup T) \to 0.
	\end{equation*}
	
	By Corollary \ref{Cor: MDH of a NE Holder triangle}, we have $MDH_1^{b}(T) \cong \{0\}$, because $T$ is LNE. Since $X$, $T$ and $X\cup T$ have connected link, $MDH_0^{b}(T) \cong MDH_0^{b}(X) \cong MDH_0^{b}(X\cup T) \cong \mathbb{Z}$. Furthermore $MDH_{1}^{b}(X\cap T) \cong \{0\}$, because $X \cap T$ is $b$-equivalent to a finite union of arcs. Now let $k, l \in \mathbb{Z}_{\ge 0}$ such that $MDH_{1}^{b}(X) \cong \mathbb{Z}^k$ and $MDH_{1}^{b}(X\cup T) \cong \mathbb{Z}^l$. There are two cases:
	
	\textbf{Case 1:} $\beta \le b < \tilde \beta$. Since $X \cap T \sim_b \gamma_1 \cup \gamma_2$, then $MDH_{0}^{b}(X\cap T) \cong \Z^2$ and the exact sequence reduces to
	\begin{equation*}
		\dots \to 0 \stackrel{f_1}{\rightarrow} \mathbb{Z}^k  \stackrel{f_2}{\rightarrow} \mathbb{Z}^l \stackrel{f_3}{\rightarrow} \mathbb{Z}^2  \stackrel{f_4}{\rightarrow} \mathbb{Z}^2 \stackrel{f_5}{\rightarrow} \mathbb{Z} \stackrel{f_6}{\rightarrow} 0.
	\end{equation*}
	By exactness of the sequence we obtain
	\begin{equation*}
		0={\rm dim}({\rm im}(f_1))={\rm dim}({\rm ker}(f_2)) \Rightarrow k={\rm dim}({\rm im}(f_2))= {\rm dim}({\rm ker}(f_3))\Rightarrow 
	\end{equation*}
	\begin{equation*}
		l-k={\rm dim}({\rm im}(f_3))={\rm dim}({\rm ker}(f_4)) \Rightarrow 2-l+k={\rm dim}({\rm im}(f_4))={\rm dim}({\rm ker}(f_5)) \Rightarrow
	\end{equation*}
	\begin{equation*}
		l-k= {\rm dim}({\rm im}(f_5))={\rm dim}({\rm ker}(f_6)).
	\end{equation*}
	Therefore, as ${\rm dim}({\rm ker}(f_6))=1$, we obtain $l=k+1$ and the result follows.
	
	\textbf{Case 2:} $\tilde \beta \le b$. Since $X \cap T \sim_b \gamma_1$, then $MDH_{1}^{b}(X\cap T) \cong \Z$ and the exact sequence reduces to
	\begin{equation*}
	\dots \to 0 \stackrel{f_1}{\rightarrow} \mathbb{Z}^k  \stackrel{f_2}{\rightarrow} \mathbb{Z}^l \stackrel{f_3}{\rightarrow} \mathbb{Z}  \stackrel{f_4}{\rightarrow} \mathbb{Z}^2 \stackrel{f_5}{\rightarrow} \mathbb{Z} \stackrel{f_6}{\rightarrow} 0.
	\end{equation*}
	By exactness of the sequence we obtain
	\begin{equation*}
		0={\rm dim}({\rm im}(f_1))={\rm dim}({\rm ker}(f_2)) \Rightarrow k={\rm dim}({\rm im}(f_2))= {\rm dim}({\rm ker}(f_3))\Rightarrow 
	\end{equation*}
	\begin{equation*}
		l-k={\rm dim}({\rm im}(f_3))={\rm dim}({\rm ker}(f_4)) \Rightarrow 1-l+k={\rm dim}({\rm im}(f_4))={\rm dim}({\rm ker}(f_5)) \Rightarrow
	\end{equation*}
	\begin{equation*}
		1+l-k= {\rm dim}({\rm im}(f_5))={\rm dim}({\rm ker}(f_6)).
	\end{equation*}
	Therefore, as ${\rm dim}({\rm ker}(f_6))=1$, we obtain $l=k$ and the result follows.
\end{proof}

\begin{remark}\label{Rem: MD-Homology for b less than mu(X)}
	Notice that, for any surface $X$, when $b < \mu(X)$, every two arcs in $V(X)$ are $b$-equivalent (see Remark \ref{Rem: b-equiv and tord of arcs}). Therefore, its MD-Homology of degree $1$ in this case is the same as of an arc, i.e., $\{0\}$. Therefore, we will only deal with the case $b\ge \mu(X)$.
\end{remark}

\begin{Exam}\label{Exam: bubble snake}
	A $\beta$-bubble $X=T(\gamma_{1},\gamma_{2})$ is a non-singular $\beta$-H\"older triangle containing an interior arc $\theta$ such that both $X_1 = T(\gamma_{1},\theta)$ and $X_2 = T(\theta,\gamma_{2})$ are LNE and $tord(\gamma_{1},\gamma_{2}) > \beta$. When $X$ is a snake it is called a $\beta$-bubble snake (see Definition 4.45 of \cite{GabrielovSouza}). 
	
	The Definition of a $\beta$-bubble allows a great variety of bubbles, however, as showed in Theorem 5.9 of \cite{GabrielovSouza}, the important ones which emerge when decomposing the Valette link of a surface into normal and abnormal zones are outer Lipschitz equivalent to the ones with link as in Figure 10 of \cite{GabrielovSouza}.
	
	For a concrete example of a $\beta$-bubble snake consider the set $X = T_1\cup T_2$ with $T_1 = \Delta(\gamma_{1},\lambda_1)$ and $T_2 = \Delta(\lambda_1,\lambda_2)\cup \Delta(\lambda_2,\gamma_{2})$ where, for $\alpha,\beta \in \F$, $\alpha > \beta$, one has $\gamma_i(t)=(t,(-1)^{i}t^\alpha,0)$ and $\lambda_i(t) = (t,(-1)^{i}t^\beta,t^\beta)$, for $i=1,2$, are arcs parameterized by the first coordinate, which is equivalent to the distance to the origin. The H\"older triangles $T_1$, $T_2$, $T(\lambda_1,\lambda_2)$ and $T(\lambda_2,\gamma_{2})$ are LNE. In particular, $X$ is non-singular. Notice that $tord(\gamma_1,\gamma_2) = \alpha > \beta = itord(\gamma_1,\gamma_2)$, since $d_{inn}(\gamma_1(t),\gamma_2(t)) \ge 2t^\beta$. Therefore, $X$ is a $\beta$-bubble snake. 	
	
	It easily follows from Remark \ref{Rem: b-equiv and tord of arcs}, by considering $X=T_1$ and $T=T_2$ in Lemma \ref{Lem: gluing-triangles}, that the MD-Homology group of a $\beta$-bubble snake $X$ is $$MDH_{1}^{b}(X,0,d_{out}) = \left\{\begin{array}{cc}
		0, & \text{for } b < \beta\\
		\mathbb{Z}, & \text{for } \beta\leq b <  \alpha\\
		0, & \text{for } \alpha\leq b \\
	\end{array}\right.$$
	However, the MD-Homology group of a non-snake bubble could be wild (see Remark \ref{Rem: non-snake realization}.)
\end{Exam}

\begin{remark}
	If $X$ is a $\beta$-horn, which is any set germ in $\R^n$ that is outer Lipschitz equivalent to $\{(x,y,z)\in \R^3\mid x^{\beta} + y^\beta = z^{2\beta}\}$, it is $b$-equivalent to a $\beta$-bubble snake of spectra $\{\alpha\}$ when $\beta \le b < \alpha$. In particular, $$MDH_{1}^{b}(X,0,d_{out}) = \left\{\begin{array}{cc}
		0, & \text{for } b < \beta\\
		\mathbb{Z}, & \text{for } \beta\leq b  \\
	\end{array}\right.$$
\end{remark}

\begin{remark}\label{Rem: gluing bubbles}
	Consider the word $W = [w_1\cdots w_p]$, in the alphabet $\{x_1,x_2,\ldots \}$, where $p = 2k$, $k \in \mathbb{Z}_{\ge 2}$, obtained recursively as follows: if $k=2$ then $W = W_2 = [x_1x_2x_1x_2]$ and for $k > 2$ we have $W=W_k = [w_1\cdots w_{r-1}x_pw_r x_p]$, where $W_{k-1} = [w_1\cdots w_r]$ and $w_r = w_{r-2} = x_{k-1}$. One can easily check that this word $W$ is a snake name (see Definition 6.6 of \cite{GabrielovSouza}). Hence, by Theorem 6.23 of \cite{GabrielovSouza}, for any $\alpha,\beta \in \F$, with $\alpha>\beta$, there is a $\beta$-snake $X' \subset \R^{2p - 1}$ such that $W(X') = W_k$ (see Example \ref{Exam: realization for snake names}). Notice that $X'$ has exactly $k$ nodes, each with two nodal zones and spectrum $\{\alpha\}$ (see Figure \ref{Fig. snake realization construction}).  
	
	\begin{figure}[h!]
		\centering
		\includegraphics[width=3in]{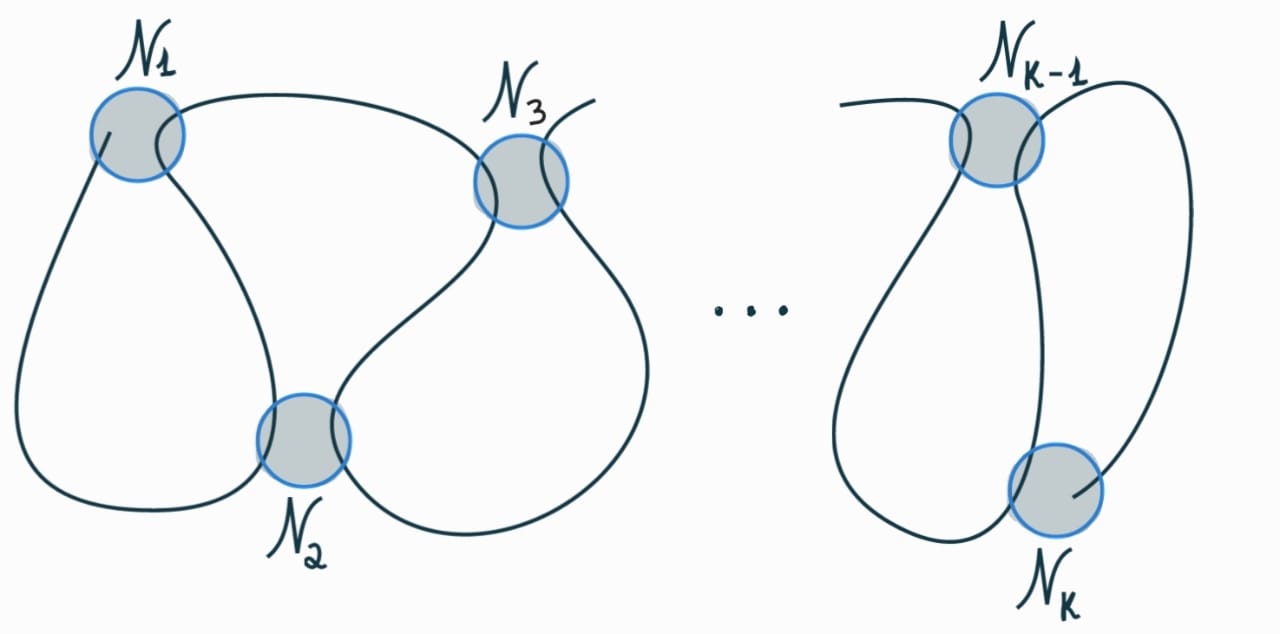}
		\caption{The link of a $\beta$-snake with $k$ nodes $\mathcal{N}_1,\ldots,\mathcal{N}_k$ as in Remark \ref{Rem: gluing bubbles}. Each $\delta_j$ belong to a node and $\sigma_j \in G(T(\delta_j,\delta_{j+1}))$, $1\le j < 2p$. Points inside the shaded disks represent arcs with tangency order higher than $\beta$.}\label{Fig. snake realization construction}
	\end{figure}

	Slightly changing the choices of arcs in the construction of $X'$ we can obtain a snake $X \subset \R^{2p - 1}$ with the same number of nodes, but with a different spectra. Indeed, if $k = \tilde{k}_1 + \cdots +\tilde{k}_m$, $\tilde k_i \in \mathbb{Z}_{\ge 1}$, then we could have $\tilde{k}_i$ of the nodes with spectrum $\{q_i\}$, $\beta <q_i$, for each $i=1,\ldots, m$. In fact, let $J = \{1\le j \le p \mid w_i = w_j \; \textrm{for} \; i<j\}$. By the construction of $W$ we know that we have exactly $k$ distinct letters in $W$, each one appearing twice. Assume that $J = \bigsqcup_{l=1}^m J_l$ where each $J_l$ has exactly $\tilde{k}_l$ elements for $1\le l \le m$. The changing consists of setting, for each $j\in J_l$, set $\delta_j = \delta_{r(j)} + t^{q_{l}}e_j$ (see Example \ref{Exam: realization for snake names}). Since we have $q_l>\beta$ for each $l$, results from 6.18 to 6.23 of \cite{GabrielovSouza} still hold. Therefore, $X$ is a $\beta$-snake such that $W(X) = W$. 
\end{remark}

\begin{Lem}\label{Lem: MD-Homology of glued triangles}
	Given $\alpha,q_1,\ldots,q_m,\beta \in \F$, with $\alpha>\beta$ and $q_l>\beta$ for all $l$, let $X'$ and $X$ be $\beta$-snakes as in Remark \ref{Rem: gluing bubbles}. Then
	\begin{enumerate}
		\item $$MDH_{1}^{b}(X',0,d_{out}) = \left\{\begin{array}{cc}
			0, & \text{for } b < \beta\\
			\mathbb{Z}^{k}, & \text{for } \beta\leq b <  \alpha\\
			0, & \text{for } \alpha\leq b \\
		\end{array}\right.$$
	\item If $q_1<q_2<\cdots < q_m$, then $$MDH_1^b(X,0,d_{out}) = 
	\left\{\begin{array}{cc}
		0, & \text{for } b < \beta\\
		\mathbb{Z}^{k}, & \text{for } \beta\leq b <  q_1\\
		\mathbb{Z}^{k- \tilde{k}_1}, & \text{for } q_1\leq b <  q_2\\
		\vdots &  \vdots\leq b <  \vdots\\
		\mathbb{Z}^{k - (\tilde{k}_1+\cdots+\tilde{k}_{m-1})}, & \text{for } q_{m-1}\leq b <  q_{m}\\
		0, & \text{for } q_{m}\leq b \\
	\end{array}\right.$$
	\end{enumerate}
\end{Lem}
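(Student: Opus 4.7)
The plan is to build $X'$ (respectively $X$) inductively by gluing the LNE $\beta$-H\"older triangles $T_1, \ldots, T_{2k-1}$ from Remark \ref{Rem: gluing bubbles}, applying Lemma \ref{Lem: gluing-triangles} at each step. Set $Y_j := T_1 \cup \cdots \cup T_j$; then $Y_1 = T_1$ is LNE, so $MDH_1^b(Y_1, 0, d_{out}) \cong \{0\}$ by Corollary \ref{Cor: MDH of a NE Holder triangle}. At step $j+1$ we glue $T_{j+1}$ (an LNE $\beta$-H\"older triangle with boundary arcs $\delta_{j+1}, \delta_{j+2}$) along $\delta_{j+1} \in Y_j$; the effect on $MDH_1^b$ will be governed by $\tilde\beta_j := tord(\delta_{j+2}, Y_j)$.

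Since each letter of $W_k$ appears exactly twice, at step $j+1$ exactly one of the following occurs: either (a) $w_{j+2}$ is a new letter, so $\delta_{j+2} = \delta_1 + t^\beta e_{j+2}$ lies in a fresh coordinate direction not used by any arc of $Y_j$, whence the ultrametricity of $tord$ yields $\tilde\beta_j = \beta$; or (b) $w_{j+2} = w_i$ for a unique $i \le j+1$, so $\delta_{j+2} = \delta_i + t^{q_l} e_{j+2}$ closes a node of spectrum $q_l$ (with $q_l = \alpha$ for $X'$). In case (b), because each node has exactly two arcs and no arc of $Y_j$ distinct from $\delta_i$ is at tangency order strictly greater than $\beta$ with $\delta_i$, we obtain $\tilde\beta_j = q_l > \beta$. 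A case analysis using the straight structure of each $T_{j+1}$ (in which generic arcs have tangency $\beta$ with both boundary arcs) together with the ultrametric property of $tord$ will show that conditions (i) and (ii) of Lemma \ref{Lem: gluing-triangles} hold at every step.

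In case (b), Lemma \ref{Lem: gluing-triangles} yields $MDH_1^b(Y_{j+1}) \cong MDH_1^b(Y_j) \oplus \mathbb{Z}$ for $\beta \le b < q_l$ and $MDH_1^b(Y_{j+1}) \cong MDH_1^b(Y_j)$ otherwise. In case (a) the interval $[\beta, \tilde\beta_j)$ is empty, so Lemma \ref{Lem: gluing-triangles} does not literally apply; however, the same Mayer-Vietoris argument as in its proof goes through with the $b$-cover $\{Y_j, T_{j+1}\}$, whose intersection is $b$-equivalent to the single arc $\delta_{j+1}$ for every $b$, yielding $MDH_1^b(Y_{j+1}) \cong MDH_1^b(Y_j)$ for all $b$.

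Summing contributions, since $W_k$ contains $k$ distinct letters, exactly $k$ of the gluing steps are of type (b). For statement (1) all node spectra equal $\alpha$, so each closing contributes $\mathbb{Z}$ in the range $\beta \le b < \alpha$, yielding $MDH_1^b(X') \cong \mathbb{Z}^k$ there and $\{0\}$ elsewhere. For statement (2), the $\tilde k_l$ nodes of spectrum $q_l$ each contribute $\mathbb{Z}$ in the range $\beta \le b < q_l$; as $b$ crosses each threshold $q_l$, exactly $\tilde k_l$ generators drop out, producing the stated staircase formula. The main technical obstacle will be the case-by-case verification of conditions (i) and (ii) of Lemma \ref{Lem: gluing-triangles} together with the identification of $\tilde\beta_j$ at every step; the mild extension of Lemma \ref{Lem: gluing-triangles} to the degenerate case $\tilde\beta_j = \beta$ is straightforward from its Mayer-Vietoris proof.
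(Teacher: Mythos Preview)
Your proposal is correct and follows essentially the same strategy as the paper: build the snake by successively adjoining the LNE $\beta$-H\"older triangles of the explicit realization and apply Lemma~\ref{Lem: gluing-triangles} (Mayer--Vietoris) at each step, so that every time the far boundary arc $\delta_{j+2}$ lands in an already-used node one picks up a $\mathbb{Z}$ summand on the interval $[\beta,q_l)$. The paper phrases this via the pancake decomposition $\{X_j=T(\gamma_{j-1},\gamma_j)\}$ of Proposition~4.56 in \cite{GabrielovSouza} (which coincides with your $\{T_j\}$ up to choosing $\gamma_j$ inside the nodal zone rather than equal to $\delta_{j+1}$) and first passes to a $b$-equivalent union of $\beta$-horns before doing the recursion, whereas you work directly on the original surface and separate the ``new letter'' versus ``repeated letter'' cases; this is a difference of bookkeeping, not of method. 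Your observation that the degenerate case $\tilde\beta_j=\beta$ is handled by the same Mayer--Vietoris computation with $Y_j\cap T_{j+1}$ $b$-equivalent to the single arc $\delta_{j+1}$ is exactly right, and the verification of conditions (i)--(ii) via the fresh coordinate directions $e_{m+j+1}$ and $e_{j+2}$ (which forces $tord(\gamma,Y_j)$ to be what you claim) is the point the paper leaves implicit.
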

\begin{proof}
	$(1)$ When $b<\beta$ the result follows from Remark \ref{Rem: MD-Homology for b less than mu(X)}. If $b>\alpha$ then any two arcs in $X'$ which are $b$-equivalent are necessarily in the same connected component of a $\mathcal{H}_{\alpha,\eta}(X')$, for $\eta > 0$ sufficiently small. Since $\mathcal{H}_{\alpha,\eta}(X')$ admits any arc in $V(X')$ as $b$-deformation retract, $b > \alpha$ implies that $X'$ is $b$-contractible, and then its MD-Homology is $\{0\}$. Consequently, it only remains to prove the case $\beta \le b <\alpha$. In this case, $X'$ is $b$-equivalent to the union of $k$ $\beta$-horns such that each pair of such horns either do not intersect or intersect along a LNE $\beta$-H\"older triangle (one can visualize the link of this surface considering a link as in Figure \ref{Fig. snake realization construction} where arcs in a same node collapsed to a single arc). Let us prove that each of these horns correspond to a generator, hence its MD-Homology group will be $\Z^k$.
	
	Let $\{X_j = T(\gamma_{j-1},\gamma_j)\}_{j=1}^{p-1}$ be a pancake decomposition as in Proposition 4.56 of \cite{GabrielovSouza}, i.e., we choose $\gamma_0=\delta_1$, $\gamma_{p-1} = \delta_{p}$ and each $\gamma_j$, $0<j<p-1$, to be in the $j$-th interior nodal zone of $X'$ when we are moving along its link with orientation from $\gamma_0$ to $\gamma_{p-1}$. Assume that $\gamma_0 \in \mathcal{N}_1$ and $\gamma_{2k} \in \mathcal{N}_k$, where $\mathcal{N}_1$ and $\mathcal{N}_{k}$ are assumed to be the first and last node according to the orientation from $\gamma_0$ to $\gamma_{p-1}$ (see Figure \ref{Fig. snake realization construction}). By the construction of $X'$, for each $j=1,\ldots,p-3$, either $X_j\cup X_{j+1}$ or $X_j\cup X_{j+1}\cup X_{j+2}$ is $b$ equivalent to a $\beta$-bubble. The result then follows from recursively applying Lemma \ref{Lem: gluing-triangles} when adding the pancake $X_j$, for $j>2$, to the surface $\bigcup_{i<j}X_i$.   \\
	
	$(2)$ By the same reasons as in item $(1)$ we have the MD-Homology group of $X$ equal to $\{0\}$ when either $b <\beta$ or $b\ge q_m$, since $q_m > \max_{i}\{q_i\}$. Moreover, $q_1<q_2<\cdots < q_m$ implies that, for $\beta \le b <q_1$, $MDH_1^b(X) \cong MDH_1^b(X') \cong \Z^k.$
	When $q_i \le b < q_{i+1} $, all the arcs with tangency order higher than or equal to $q_{i+1}$ are $b$-equivalent, while the arcs with tangency order at most $q_i$ are not (see Remark \ref{Rem: b-equiv and tord of arcs}). Therefore, in this case, there is $l \in \mathbb{Z}_{\ge 1}$ such that $X$ is $b$-equivalent to the LNE set $\bigcup_{i=1}^{l} \tilde X_i$, where $\tilde X_i\cap \tilde X_j$ is $\{0\}$, an arc or a $\beta$-H\"older triangle, for $1\le i<j \le l$, and where $k-(\tilde k_1+\ldots+\tilde k_i)$ of the $\tilde X_i$'s are $\beta$-horns and the remaining ones are $\beta$-H\"older triangles (see Figure \ref{Fig. gluing bubbles}).

	\begin{figure}[h!]
		\centering
		\includegraphics[width=4in]{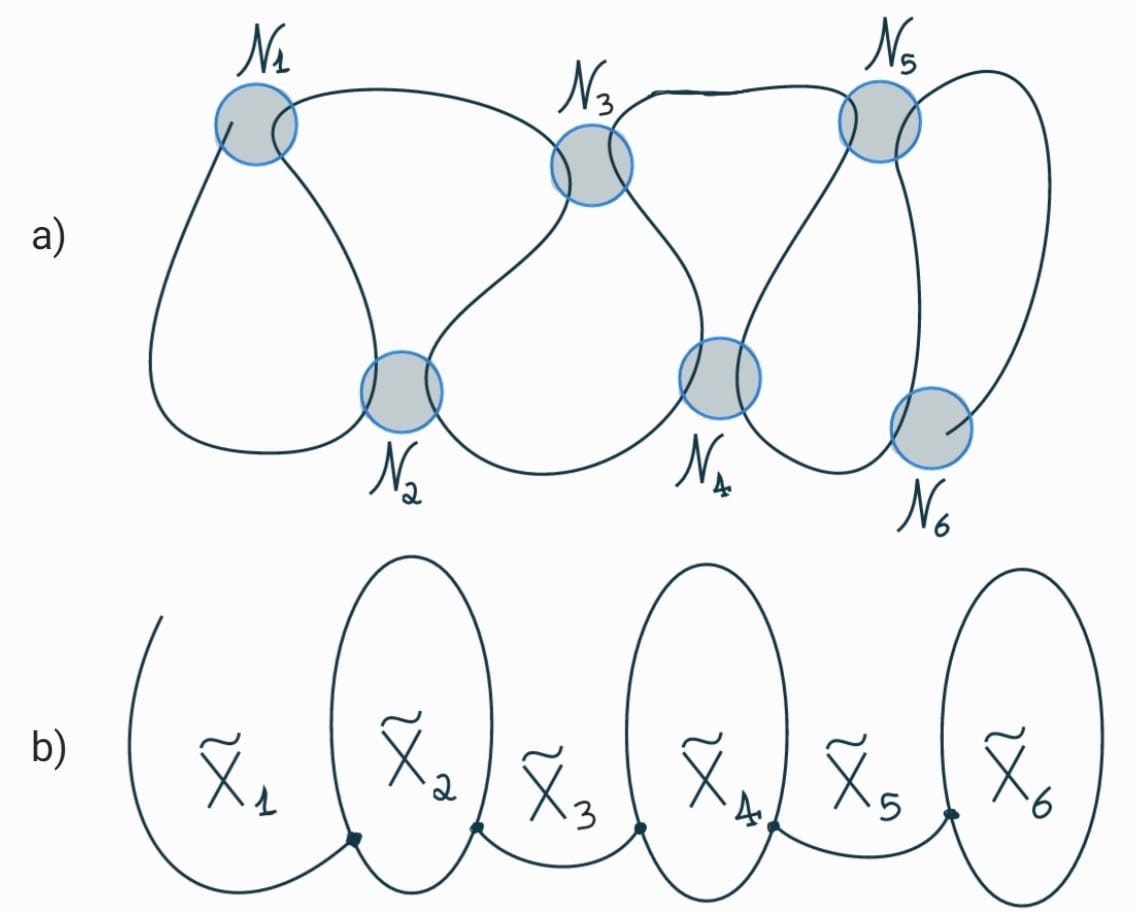}
		\caption{Illustration for the proof of item $(2)$ of Lemma \ref{Lem: MD-Homology of glued triangles}. For the surface which link is part $a)$, the nodes $\mathcal{N}_1, \mathcal{N}_5$ have spectrum $\{q_1\}$, while $\mathcal{N}_3$ has spectrum $\{q_2\}$ and $\mathcal{N}_2, \mathcal{N}_4, \mathcal{N}_6$ have spectrum $\{q_3\}$. The link in part $b)$ corresponds to a surface which is $b$-equivalent to the surface in part $a)$ for $q_1<q_2\le b < q_3$. In part $b)$, $\tilde X_i$, $i=1,3,5$, is the link of a $\beta$-H\"older triangle and $\tilde X_j$, $j=2,4,6$, is the link of a $\beta$-horn. Points inside the shaded disks represent arcs with tangency order higher than the surface's exponent.}\label{Fig. gluing bubbles}
	\end{figure}
	
	Since each $\tilde X_i$ that is a $\beta$-H\"older triangle is $b$-contractible and each $\tilde X_i$ that is a $\beta$-horn increase the number of generators by one (by the same recursive application of Lemma \ref{Lem: gluing-triangles} of the previous case), we conclude that $$MDH_1^b(X,0,d_{out}) \cong \Z^{k-(\tilde k_1+\cdots+\tilde k_i)}.$$
\end{proof}

\begin{Teo}\label{Teo: realization for MD-Homology}
	For any sequences of non-negative integers, $k_1 < k_2 < \cdots < k_m$, and rationals $q_1<q_2< \cdots < q_{m}$, with $q_1>\beta=\mu(X) \ge 1$, there exists a surface $X$ such that $$MDH_1^b(X,0,d_{out}) =
	\left\{\begin{array}{cc}
		0, & \text{for } b < \beta\\
		\mathbb{Z}^{k_m}, & \text{for } \beta\leq b <  q_1\\
		\mathbb{Z}^{k_{m-1}}, & \text{for } q_1\leq b <  q_2\\
		\vdots &  \vdots\leq b <  \vdots\\
		\mathbb{Z}^{k_1}, & \text{for } q_{m-1}\leq b <  q_{m}\\
		0, & \text{for } q_{m}\leq b \\
	\end{array}\right.$$
\end{Teo}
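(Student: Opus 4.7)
The plan is to reduce directly to Lemma~\ref{Lem: MD-Homology of glued triangles}(2): I would engineer the node multiplicities in the snake construction of Remark~\ref{Rem: gluing bubbles} so that the successive drops in the $b$-MD-Homology dimension correspond to the prescribed jumping rates $q_1 < \cdots < q_m$ and target dimensions $k_1 < \cdots < k_m$. Specifically, set $k := k_m$, $\tilde{k}_l := k_{m-l+1} - k_{m-l}$ for $1 \leq l \leq m-1$, and $\tilde{k}_m := k_1$. Because the $k_i$ are strictly increasing non-negative integers, each $\tilde{k}_l$ with $l < m$ is a positive integer, and a telescoping sum gives $\tilde{k}_1 + \cdots + \tilde{k}_m = k_m = k$.

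Next, I would invoke Remark~\ref{Rem: gluing bubbles} to build a $\beta$-snake $X \subset \R^{4k-1}$ with $k$ nodes, of which exactly $\tilde{k}_l$ carry spectrum $\{q_l\}$. By Lemma~\ref{Lem: MD-Homology of glued triangles}(2), this gives
\[
MDH_1^b(X,0,d_{out}) \cong \Z^{k-(\tilde{k}_1+\cdots+\tilde{k}_{l-1})}
\]
on the interval $q_{l-1} \leq b < q_l$ (with the convention $q_0 = \beta$), and another telescoping identity reduces the exponent to $k_{m-l+1}$, matching the prescribed pattern. The vanishing for $b < \beta$ and for $b \geq q_m$ is the content of the top and bottom cases of the same lemma.

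The only nuisances are the degenerate low cases. When $k_m = 0$ (which forces $m=1$ and $k_1=0$), any $b$-contractible model such as an arc works. When $k_m = 1$, the construction of Remark~\ref{Rem: gluing bubbles} needs $k \geq 2$, so I would substitute a single $\beta$-bubble snake with the appropriate spectrum, using Example~\ref{Exam: bubble snake}. When $k_1 = 0$ but $k_m \geq 2$, the value $\tilde{k}_m = 0$ simply means that no nodes with spectrum $\{q_m\}$ are needed: I would omit $q_m$ and run the construction with $m-1$ spectra, which still reproduces the correct pattern since $\Z^{k_1} = 0$ matches the terminal vanishing. The genuine geometric content is packaged inside Lemma~\ref{Lem: MD-Homology of glued triangles} and Remark~\ref{Rem: gluing bubbles}; once the combinatorial matching above is arranged, no further obstacle remains, so the main ``difficulty'' is purely bookkeeping.
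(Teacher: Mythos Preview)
Your proposal is correct and follows essentially the same route as the paper: both set $k=k_m$, $\tilde{k}_l=k_{m-l+1}-k_{m-l}$ for $1\le l\le m-1$, $\tilde{k}_m=k_1$, and appeal directly to Lemma~\ref{Lem: MD-Homology of glued triangles}(2) together with the snake built in Remark~\ref{Rem: gluing bubbles}. The paper isolates only the special case $m=1$, $k_1=1$ (handled by Example~\ref{Exam: bubble snake}) and lumps everything else under the lemma; your treatment of the degenerate cases $k_m=0$, $k_m=1$, and $k_1=0$ is in fact more careful than the paper's, but adds no new idea.
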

\begin{proof}
	If $m=1$ and $k_1=1$ then the sequences $k_1 < k_2 < \cdots < k_m$ and $q_1<q_2< \cdots < q_{m}$ reduce to $1$ and $q_1=q$. In this case, the result holds as seen in Example \ref{Exam: bubble snake} taking $\mu(X) = \beta$ and $q = \alpha$. Otherwise, the result follows from Lemma \ref{Lem: MD-Homology of glued triangles}, item $(2)$, where we take $k=k_m$, $\tilde k_{l} = k_{m-l+1} - k_{m-l}$, for $l=1,\ldots, m-1$, and $\tilde k_m = k_1$.

\end{proof}

\begin{remark}\label{Rem: non-snake realization}
	Notice that under the hypothesis of Theorem \ref{Teo: realization for MD-Homology} one can obtain a non-snake $\beta$-bubble with the same MD-Homology of $X$. Indeed, given $m=2k+1$, where $k\ge2$, we consider the arcs $\delta_1, \ldots,\delta_{m}$ and $\sigma_1,\ldots,\sigma_{m-1}$ in $\R^{2m - 1} = \langle e_1,\ldots,e_{2m-1}\rangle$ defined by $\delta_1(t) = te_1$, $\delta_j(t) = \delta_1(t) + t^{\beta} e_{j}$ for $1<j\le k+1$, $\delta_j(t) =\delta_{2k+2-j}(t) + t^{\alpha_{j-k-1}}e_j$ for $k+1 < j \le m$ and $\sigma_i(t) = \delta_1(t) + t^\beta e_{m+i}$ for $1\le i <m$. If $1\le \beta <\alpha_1<\cdots < \alpha_k$ then Corollary 6.20 and Lemma 6.21 of \cite{GabrielovSouza} implies that $X_1 = \bigcup_{1\le j\le k}T_j$ and $X_2 = \bigcup_{k<j\le m}T_j$ are two LNE non-singular $\beta$-H\"older triangles. It also follows from those results that $\delta_{k+1}$ is Lipschitz non-singular. Hence, since \begin{enumerate}
		\item for all $1\le i, j\le m$ and $i\ne j$, $tord(\delta_i,\delta_j) = \left\{\begin{array}{cc}
			\alpha_i, & \text{if } j = 2k + 2 - i\\
			\beta, & \text{otherwise } 
		\end{array}\right.$,
	\item $tord(\delta_i,\sigma_j) = \beta$ for all $1\le i\le m$ and $1\le j\le m-1$,
	\item $tord(\sigma_i,\sigma_j) = \beta$ for all $1\le i,j \le m-1$ with $i\ne j$,
	\end{enumerate}
	we have that $X = \bigcup_{1\le j\le m}T_j$ is a non-snake $\beta$-bubble with $\theta = \delta_{k+1}$ (see Example \ref{Exam: bubble snake}). Nevertheless, for $\alpha_i \le b < \alpha_{i+1}$, $i=0, 1,\ldots, k-1$, with $\alpha_0 = \beta$ by convention, $X$ has the same MD-Homology as a set which is the union of $k - i$ $\beta$-horns such that each pair of such horns either do not intersect or intersect along a LNE $\beta$-H\"older triangle. Hence, its MD-Homology group is $$MDH_1^b(X,0,d_{out},\Z)=
	\left\{\begin{array}{cc}
		0, & \text{for } b < \beta\\
		\mathbb{Z}^{k}, & \text{for } \beta\leq b <  \alpha_1\\
		\mathbb{Z}^{k-1}, & \text{for } \alpha_1\leq b <  \alpha_2\\
		\vdots &  \vdots\leq b <  \vdots\\
		\mathbb{Z}, & \text{for } \alpha_{m-1}\leq b <  \alpha_{m}\\
		0, & \text{for } \alpha_{m}\leq b \\
	\end{array}\right.$$
	Slightly adapting this construction, by considering some of the $\alpha_i$ equal, one can obtain the MD-Homology as in Theorem \ref{Teo: realization for MD-Homology}.
	We illustrate this construction on Figure \ref{Fig. non-snake bubble}.
	
	\begin{figure}[h!]
		\centering
		\includegraphics[width=3in]{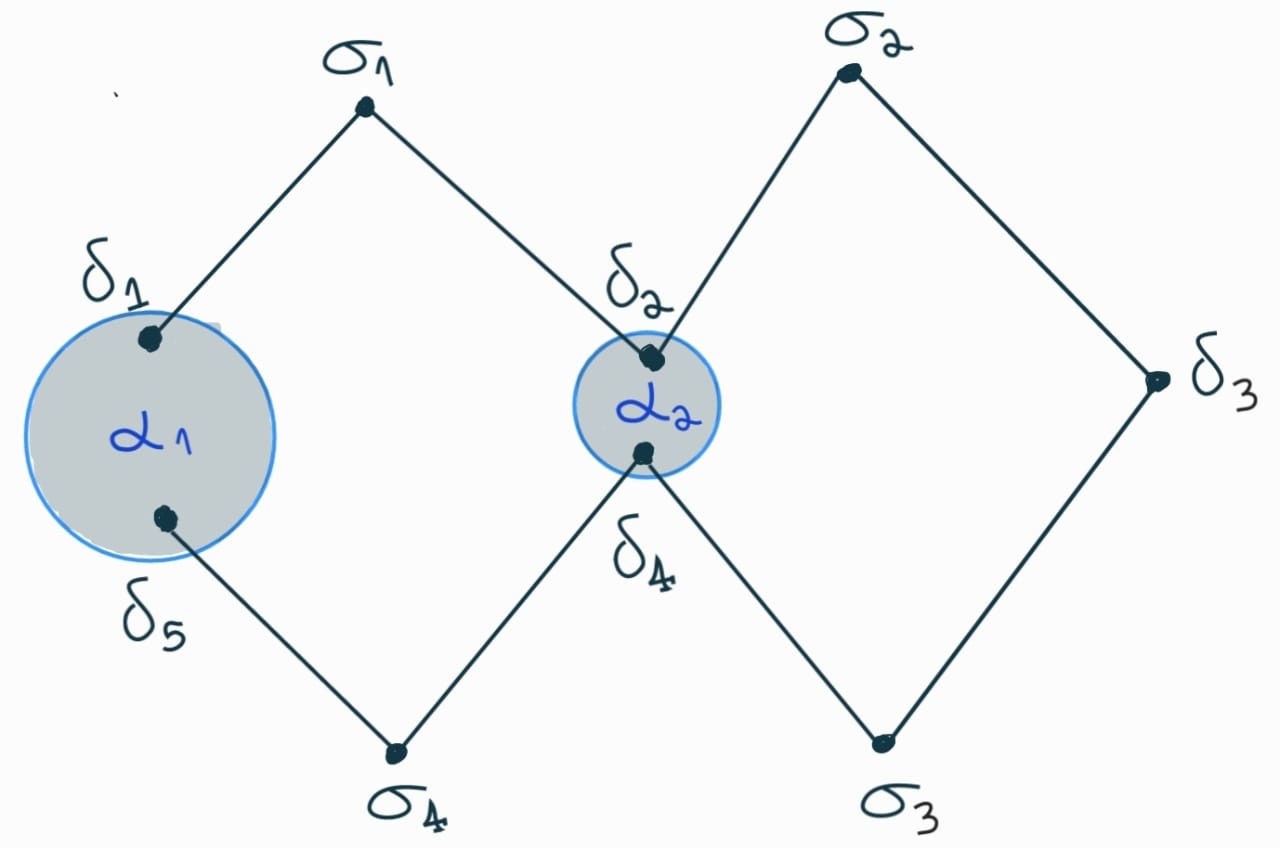}
		\caption{The link of a non-snake $\beta$-bubble in the construction of Remark \ref{Rem: non-snake realization} for $k=2$. Points inside the shaded disks represent arcs with tangency order higher than the surface's exponent.}\label{Fig. non-snake bubble}
	\end{figure}
\end{remark}

\section{Surfaces with the same MD-Homology}\label{sec:weakouter-MDH}
It is known from the work in \cite{MDH2022} that outer Lipschitz equivalent surfaces have the same MD-Homology in the outer metric. A natural question one could formulate is: is there a weaker condition on the surface guaranteeing they have the same MD-Homology? We have a positive answer in the case where the surfaces are simples $\beta$-snakes.

Gabrielov and Souza defined in \cite{GabrielovSouza} the notion of weakly outer Lipschitz equivalence for $\beta$-H\"older triangles, which is a homeomorphism $h\colon X\rightarrow Y$ such that $tord(\gamma,\gamma')>\beta \Leftrightarrow tord(h(\gamma),h(\gamma'))>\beta$ for any arcs $\gamma,\gamma'\subset X$. In Theorem 6.28 of \cite{GabrielovSouza} they establish necessary and sufficient conditions so that two $\beta$-snakes are weakly outer equivalent. 

One could be initially tempted to imagine that those notions, ``having the same MD-Homology'' and ``being weakly outer Lipschitz equivalent'' could imply one another. However this does not happen as it is shown in Example \ref{Exam: weakly outer equivalent surfaces}.

\begin{Exam}\label{Exam: weakly outer equivalent surfaces}
	Let $X$ and $\tilde{X}$ be $\beta$-snakes with link as in Figure \ref{Fig. snakes weakly outer equiv}, from the left to the right, respectively. The Valette links of those surfaces contain two segments with multiplicity $2$: the ones with adjacent nodal zones in the nodes $\mathcal{N}, \mathcal{N}'$ (say $S, S'$) and $\mathcal{\tilde{N}}, \mathcal{\tilde{N}}'$ (say $\tilde{S}, \tilde{S}'$). Let us further assume that $1\le\beta < \alpha_1 <\alpha_2$ are exponents such that $$\inf\{tord(\gamma,\gamma') \mid \gamma \in S,\gamma'\in S'\} = \alpha_1 = \inf\{tord(\gamma,\gamma') \mid \gamma \in \tilde{S},\gamma'\in \tilde{S}'\},$$ 
	$$\sup\{tord(\gamma,\gamma') \mid \gamma \in S,\gamma'\in S'\} = \alpha_2 = \sup\{tord(\gamma,\gamma') \mid \gamma \in \tilde{S},\gamma'\in \tilde{S}'\}.$$ 
	Since both the tangency orders between nodal zones in a same node and segments with multiplicity $2$ are higher than $\beta$, $X$ and $\tilde{X}$ are weakly outer bi-Lipschitz equivalent by Theorem 6.28 of \cite{GabrielovSouza}. However, even in this case, they are not necessarily outer Lipschitz equivalent, because we could have distinct spectra or, less obvious, that $\beta'$ and $\tilde{\beta}'$, with $\beta < \beta' < \tilde{\beta}'$, are the supremum of the orders of zones $Z\subset S$ and $\tilde{Z} \subset \tilde{S}$, respectively, such that $tord(Z,S') = \alpha_2 = tord(\tilde{Z},\tilde{S}')$. Nevertheless, assuming that the spectra of the nodes is $\{\alpha_1\}$ for both surfaces, it follows from Lemma \ref{Lem: MD-Homology of glued triangles} that those surfaces have the same MD-Homology, given as 
	$$MDH_{1}^{b}(X,0,d_{out}) = MDH_{1}^{b}(\tilde X,0,d_{out})= \left\{\begin{array}{cc}
		0, & \text{for } b < \beta\\
		\mathbb{Z}^{3}, & \text{for } \beta\leq b <  \alpha_1\\
		\mathbb{Z}, & \text{for } \alpha_1\leq b <  \alpha_2\\
		0, & \text{for } \alpha_{2}\leq b \\
	\end{array}\right.$$
	
	Finally, notice that a $\beta$-snake with link as in Figure \ref{Fig. snake word}, with spectrum of $x_1$ and $x_2$ equal to $\{\alpha_1\}$ and spectrum of $x_3$ equal to $\{\alpha_2\}$, has the same MD-Homology as $X$ and $\tilde{X}$, but it is not weakly outer Lipschitz equivalent to them, since all of the segments in its Valette link have multiplicity $1$.
\end{Exam}

\begin{figure}[h!]
	\centering
	\includegraphics[width=4in]{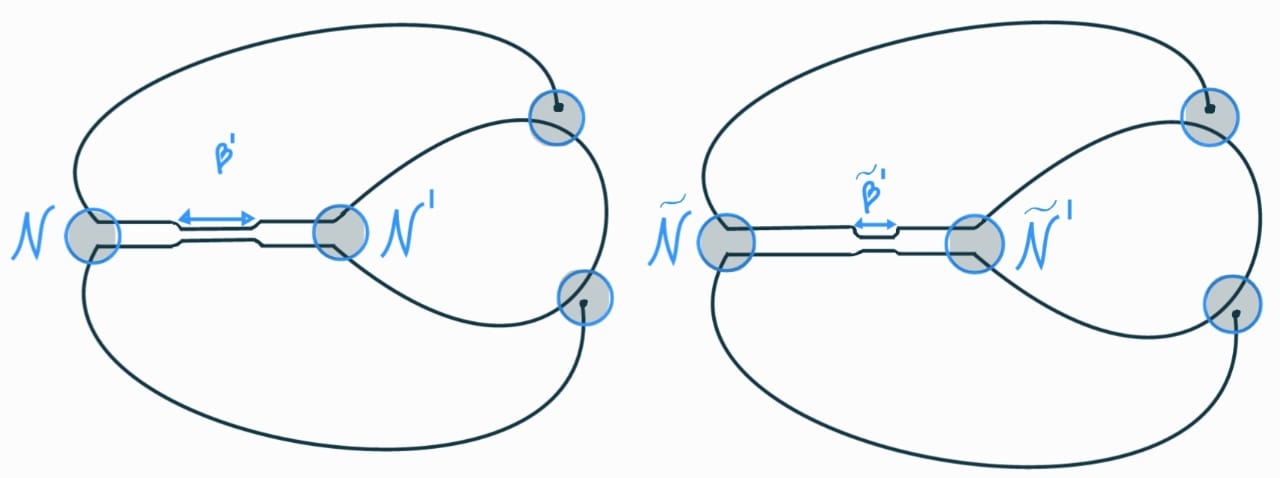}
	\caption{The link of two snakes with the same exponent, which are weakly outer Lipschitz equivalent but not outer Lipschitz equivalent. Points inside the shaded disks represent arcs with tangency order higher than the surface's exponent.}\label{Fig. snakes weakly outer equiv}
\end{figure}

Below we will generalize the definition of multiplicity of an arc in a snake present in \cite{GabrielovSouza}.

\begin{Def}\label{Def: alpha-multiplicity}
	Let $X$ be a H\"older triangle and $\gamma \subset X$ an arc. For $\alpha \in \mathbb{F}$, $\alpha \ge \mu(X) \ge 1$, the \textbf{$\alpha$-multiplicity of $\gamma$}, denoted by $m_{X,\alpha}(\gamma)$, is defined as the number of connected components of $\mathcal{H}_{\alpha,\eta}(\gamma) \cap X$ for $\eta>0$ small enough (see Remark 4.14 in \cite{GabrielovSouza}). In particular, when $Z\subset V(X)$ is a zone and $\gamma \notin Z$, we define $m_{X,\alpha,Z}(\gamma)=2$ if $(\mathcal{H}_{\alpha,\eta}(\gamma) \cap X)\cap T \neq \emptyset$, for some H\"older triangle $T$ with $V(T) \subset Z$. Otherwise we define $m_{X,\alpha,Z}(\gamma)=1$. Moreover, we define $m_{X,\alpha}(Z,Z')=2$, for some zone $Z'\subset V(X)$, if $m_{X,\alpha,Z'}(\gamma)=2$ for some arc in $\gamma \in Z$ and we define $m_{X,\alpha}(Z,Z')=1$ otherwise.  
\end{Def}

\begin{remark}
	When $X$ is a snake and $\alpha = \mu(X)$, the $\alpha$-multiplicity of $\gamma$ as in Definition \ref{Def: alpha-multiplicity} coincides with the multiplicity of $\gamma$ as in Definition 4.13 in \cite{GabrielovSouza}. 
\end{remark}

\begin{Def}\label{Def:subsegment}
	Let $X$ be a $\beta$-snake and $Z,Z'\subset V(X)$ be disjoint zones such that $tord(Z,Z')>\beta$. Let $\alpha \in \F$, $\alpha > \beta$, be such that 
	$$E_{Z,Z'}^\alpha := \{\gamma' \in Z \mid tord(\gamma',Z') = \alpha\} \neq \emptyset.$$ 
	
	We call the set $E_{Z,Z'}^\alpha$ an \textbf{$\alpha$-subsegment} (resp., \textbf{$\alpha$-subnodal zone}) of $Z$ if $Z,Z'$ is a pair of segments (resp., nodal zones).
\end{Def}

The analogs of some results in this Proposition bellow were dealt, with a different notation, for pairs of disjoint normally H\"older triangles, in Section 4 of \cite{Pairsoftriangles}.
\begin{Prop}
	Let $X$ be a $\beta$-snake and $Z,Z'\subset V(X)$ disjoint zones. If $Z$ and $Z'$ are both segments or nodal zones of $X$, then
	\begin{enumerate}
		\item We have $\beta \le tord(Z,Z') <\infty$. In particular, for any arc $\gamma \in Z$ there exists $\alpha\in \F$ such that $m_{X,\alpha}(\gamma) = 1$.
		\item Let $\alpha = tord(\gamma,Z')$, for some $\gamma \in Z$. Define
		$$H_{\gamma} := \{\gamma' \in V(X) \mid itord(\gamma,\gamma') > \alpha\}.$$
		Then $tord(H_\gamma,Z') = \alpha$ and $H_\gamma \subset Z$. In particular, if the set $E_{Z,Z'}^\alpha$ is not empty, for some $\alpha \ge \beta$, then $E_{Z,Z'}^\alpha$ is a union of zones and $\mu(E_{Z,Z'}^\alpha)\le \alpha$.
		\item For any $\alpha>\beta$, we have $\mu(E_{Z,Z'}^\alpha) = \mu(E_{Z',Z}^\alpha)$. 
		\item If, for $\alpha>\beta$, $E_{Z,Z'}^\alpha \neq \emptyset$, then it is the union of finitely many disjoint zones. Consequently, for each $\alpha > \beta$, there are finitely many $\alpha$-subsegments and $\alpha$-subnodal zones.
	\end{enumerate}
	
\end{Prop}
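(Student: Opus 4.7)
The plan is to prove the four parts in order, exploiting throughout the non-Archimedean (ultrametric) behavior of $tord$ and $itord$ on arcs, together with the maximality properties of segments and nodal zones from the snake decomposition in \cite{GabrielovSouza}.

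For $(1)$, the lower bound $tord(Z,Z')\ge \beta$ follows immediately from $tord\ge itord\ge \mu(X)=\beta$ on $V(X)\times V(X)$. For the upper bound, $tord(\gamma,\lambda)=\infty$ only when $\gamma=\lambda$, which is excluded by $Z\cap Z'=\emptyset$; combined with the definability of the map $(\gamma,\lambda)\mapsto tord(\gamma,\lambda)$ and the finiteness of its range in $\F$, the supremum is attained and finite. For the assertion $m_{X,\alpha}(\gamma)=1$, I pick $\alpha$ strictly greater than $tord(\gamma,\mathcal S)$ for every segment or nodal zone $\mathcal S$ not containing $\gamma$ (finitely many, each giving a finite exponent by the previous step); then $\mathcal H_{\alpha,\eta}(\gamma)\cap X$ meets only the horn around $\gamma$ inside its own zone, and this horn is connected for $\eta$ small enough.

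For $(2)$, the central computation is ultrametric. For $\gamma'\in H_\gamma$ and $\lambda\in Z'$, the chain $tord(\gamma,\gamma')\ge itord(\gamma,\gamma')>\alpha\ge tord(\gamma,\lambda)$ combined with the strong triangle inequality forces $tord(\gamma',\lambda)=tord(\gamma,\lambda)$, whence $tord(\gamma',Z')=\alpha$ and $tord(H_\gamma,Z')=\alpha$. To prove $H_\gamma\subset Z$, I first verify that $H_\gamma$ is a zone: given $\gamma_1,\gamma_2\in H_\gamma$, the ultrametric bound $itord(\gamma_1,\gamma_2)\ge \min\{itord(\gamma_1,\gamma),itord(\gamma,\gamma_2)\}>\alpha$ produces a non-singular H\"older triangle $T(\gamma_1,\gamma_2)\subset X$ whose interior arcs $\gamma_3$ still satisfy $itord(\gamma,\gamma_3)>\alpha$, hence $V(T)\subset H_\gamma$. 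Since $H_\gamma$ is then a zone of the same segment/nodal type as $\gamma$, the maximality of $Z$ in the snake decomposition forces $H_\gamma\subset Z$. The ``in particular'' clause is then immediate: $E_{Z,Z'}^\alpha=\bigcup_{\gamma\in E_{Z,Z'}^\alpha}H_\gamma$ is a union of zones, and two distinct $H_\gamma$'s contained in $E_{Z,Z'}^\alpha$ are separated by tangency order at most $\alpha$, giving $\mu(E_{Z,Z'}^\alpha)\le \alpha$.

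For $(3)$, given $\gamma_1,\gamma_2\in E_{Z,Z'}^\alpha$ with $tord(\gamma_1,\gamma_2)=q$, pick $\lambda_i\in Z'$ realizing $tord(\gamma_i,\lambda_i)=\alpha$. If $q<\alpha$, two applications of the strong triangle inequality (first to $(\lambda_1,\gamma_1,\gamma_2)$, then to $(\lambda_1,\gamma_2,\lambda_2)$) give $tord(\lambda_1,\lambda_2)=q$, so $\mu(E_{Z',Z}^\alpha)\le q$; if $q=\alpha$, part $(2)$ already yields $\mu(E_{Z',Z}^\alpha)\le \alpha$. Symmetry supplies the reverse inequality. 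For $(4)$, $E_{Z,Z'}^\alpha$ is definable, and by $(2)$ it partitions into the disjoint zones $H_\gamma$ (disjointness comes from the ultrametric argument on $itord$: $H_{\gamma_1}\cap H_{\gamma_2}\ne \emptyset$ forces $H_{\gamma_1}=H_{\gamma_2}$); definable triviality in the o-minimal structure then bounds the number of these pieces, and the finiteness of $\alpha$-subsegments and subnodal zones follows. The most delicate step will be the inclusion $H_\gamma\subset Z$ in $(2)$: it rests on the precise characterization of segments and nodal zones in \cite{GabrielovSouza} as maximal zones of a specified type, and requires verifying that this type is inherited by the horn $H_\gamma$ from $\gamma$ itself.
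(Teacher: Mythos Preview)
Your treatment of parts (2) and (3) follows essentially the paper's route: both hinge on the fact, drawn from \cite{GabrielovSouza} (the paper cites Lemmas 2.14, 4.24 and Proposition 4.27), that $itord(\gamma,\gamma')>\beta$ forces $\gamma$ and $\gamma'$ to lie in the same segment or nodal zone, and your ultrametric computations are the right way to exploit this. One small wrinkle in (3): to deduce $\mu(E_{Z',Z}^\alpha)\le q$ from $tord(\lambda_1,\lambda_2)=q$ you need $\lambda_1,\lambda_2\in E_{Z',Z}^\alpha$, i.e.\ $tord(\lambda_i,Z)=\alpha$ exactly and not merely $\ge\alpha$; this requires a slightly more careful choice of the $\lambda_i$, or an extra line of argument.

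There is a genuine gap in your proof of (1). You assert that $tord(Z,Z')<\infty$ because each individual $tord(\gamma,\lambda)$ is finite and the map is ``definable with finite range in $\F$''. But a supremum of finite values may well be $+\infty$, and definability alone does not prevent this: even after parametrizing $Z$ and $Z'$ definably, the tangency-order function on the parameter space need not have bounded image without further input (a definable function on an interval can be unbounded). The paper does not attempt a general o-minimal argument here; it invokes Propositions 4.20 and 4.27 of \cite{GabrielovSouza}, which are structural results specific to segments in the same cluster and nodal zones in the same node of a snake, and which give the bound directly. You should cite these rather than appeal to generic finiteness principles.

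Your route to (4) is different from the paper's and worth contrasting. You propose to extract finiteness from o-minimal definability: once $Z$ is parametrized by an interval, $\gamma\mapsto tord(\gamma,Z')$ becomes a definable function whose level set $\{tord(\gamma,Z')=\alpha\}$ has finitely many connected components, each giving one zone of $E_{Z,Z'}^\alpha$. This can be made to work, but your sketch (``definable triviality\ldots bounds the number of these pieces'') omits the needed parametrization and the passage from connected components in the parameter interval to zones in $V(X)$. The paper instead argues by contradiction using MD-Homology itself: infinitely many zones in $E_{Z,Z'}^\alpha$ would force either infinitely many generators of $MDH_1^b(X)$ for some $b$ (contradicting Theorem~\ref{Teo:finitude-geradores}) or infinitely many jumping rates (contradicting Theorem~\ref{Teo:finitude-jumping-rate}). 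Your approach, once completed, would be more elementary and independent of the MD machinery; the paper's is more indirect but makes an interesting use of the MD finiteness theorems as a geometric tool.
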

\begin{proof}
	(1) Since $Z,Z' \subset V(X)$ we necessarily have $\beta \le tord(Z,Z')$. Moreover, in the case $Z$ and $Z'$ are either segments in different clusters (see Definition 4.32 in \cite{GabrielovSouza}) or nodal zones in different nodes we would have $tord(Z,Z')=\beta$. The remaining inequality follows from Propositions 4.20 and 4.27 in \cite{GabrielovSouza}, since $Z\cap Z' = \emptyset$.\\
	
	(2) and (3) are consequences of Lemmas 2.14, 4.24 and Proposition 4.27 of \cite{GabrielovSouza}. Indeed, they show that for a segment (or nodal) arc $\gamma$, we have that $itord(\gamma,\gamma') >\beta$ implies $m_{X,\alpha}(\gamma) = m_{X,\alpha}(\gamma') $. In particular, $itord(\gamma,\gamma') >\beta$ implies that $\gamma$ and $\gamma'$ belong to the same segment (or nodal zone). Therefore, $H_\gamma \subset Z$ and $\mu(E_{Z,Z'}^\alpha) \le \alpha$.\\
	
	(4) Suppose that, for a given $\alpha>\beta$, we have $E_{Z,Z'}^\alpha = \bigsqcup_{k=1}^\infty Z_k$. For any two $Z_j,Z_k$, $j \neq k$, and any arcs $\gamma_j \in Z_j$ and $\gamma_k \in Z_k$, there should exist $\gamma_{j,k} \in V(T(\gamma_j,\gamma_k))\setminus(Z_j\cup Z_k)$, otherwise, by the definition of $E_{Z,Z'}^\alpha$, we would have $Z_j\cap Z_k \neq \emptyset$, a contradiction. Thus, for any pair $j,k$ there must exist some $\alpha_{j,k}\neq \alpha$ where $\gamma_{j,k} \in E_{Z,Z'}^{\alpha_{j,k}}$. Let us assume, without loss of generality, that for $k\ge 2$ we have $Z_k \subset V(T(\gamma_{k-1},\gamma_{k+1}))$, where $\gamma_j \in Z_j$, $j=k-1,k+1$. In particular, for each pair $j,k$ there will exist some $\alpha_{j,k}\neq \alpha$ such that $E_{Z,Z'}^{\alpha_{j,k}} \subset V(T(\gamma_{k-1},\gamma_{k+1}))$, where $\gamma_j \in Z_j$, $j=k-1,k+1$. However, this implies that either we will have a certain value of $b$ for which we count infinitely many generators of the MD-homology of degree 1, a contradiction with Theorem \ref{Teo:finitude-geradores}, or we will have infinitely many jumping rates of $b$, a contradiction with Theorem \ref{Teo:finitude-jumping-rate}.

\end{proof}

\begin{Def}\label{Def:Simple-basic-snake}
	Let $X$ be a $\beta$-snake and $Z,Z'\subset V(X)$ disjoint zones which are both segments or nodal zones of $X$ with $tord(Z,Z') > \beta$. Let $\alpha_{Z,Z'} = tord(Z,Z')$. We say that $Z$ and $Z'$ have \textbf{simple Lipschitz contact} if, for every $\alpha_1,\alpha_3\in \F$, with $\alpha_1\le \alpha_3\le \alpha_{Z,Z'}$, such that there is $\alpha_{2}\in \F$ satisfying $E_{Z,Z'}^{\alpha_i}\neq \emptyset$, $i=1,2,3$, and $E_{Z,Z'}^{\alpha_2} \subset V(T(\gamma_1,\gamma_3))$, where $\gamma_j \in E_{Z,Z'}^{\alpha_j}$, $j=1,3$, we have $\alpha_1\le \alpha_{2}\le \alpha_3$. We say that $X$ is a \textbf{simple $\beta$-snake} if any of its pairs of distinct segments or nodal zones with tangency order higher than $\beta$ has simple Lipschitz contact. A \textbf{basic $\beta$-snake} is a simple $\beta$-snake where its spectra is a singleton and all of its segments have multiplicity $1$.
\end{Def}

\begin{remark}
	Notice that a $\beta$-snake obtained from the construction of Theorem 6.23 of \cite{GabrielovSouza} is a basic $\beta$-snake.
\end{remark}

\begin{Teo}\label{Teo: HMD of basic snakes}
	Let $X$ be a basic $\beta$-snake with spectra $\{\alpha\}$. If $X$ contains exactly $k$ nodes and $m$ nodal zones, then $$MDH_{1}^{b}(X,0,d_{out},\Z) = \left\{\begin{array}{cc}
		0, & \text{for } b < \beta\\
		\mathbb{Z}^{m - k}, & \text{for } \beta\leq b <  \alpha\\
		0, & \text{for } \alpha\leq b \\
	\end{array}\right.$$ 
\end{Teo}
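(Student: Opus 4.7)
I would split the proof into the regimes $b<\beta$, $b\ge \alpha$, and $\beta\le b<\alpha$. The first case is immediate: for any $\gamma,\gamma'\in V(X)$ we have $tord(\gamma,\gamma')\ge itord(\gamma,\gamma')\ge \beta>b$, so all arcs are pairwise $b$-equivalent and $X$ has the $b$-MD-homology of a single arc, giving $MDH_1^b=0$.

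For the other two regimes I would use the pancake decomposition $X=\bigcup_{j=1}^{m-1}T_j$ from Proposition~4.56 of \cite{GabrielovSouza}, so that each $T_j=T(\delta_j,\delta_{j+1})$ is an LNE $\beta$-H\"older triangle whose boundary arcs $\delta_1,\ldots,\delta_m$ run once through each nodal zone. Set $X_j=\bigcup_{i\le j}T_i$ and let $k_j$ denote the number of distinct nodes touched by $\delta_1,\ldots,\delta_{j+1}$; note that $N(\delta_j)\ne N(\delta_{j+1})$ for every $j$, since otherwise $T_j$ would have boundary arcs with $tord=\alpha>\beta$, contradicting LNE. I would prove by induction on $j$ that $MDH_1^b(X_j,0,d_{out})\cong \mathbb{Z}^{(j+1)-k_j}$ when $\beta\le b<\alpha$ and $MDH_1^b(X_j,0,d_{out})=0$ when $b\ge \alpha$. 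The base $j=1$ is Corollary~\ref{Cor: MDH of a NE Holder triangle} applied to the LNE pancake $T_1$.

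For the inductive step I would distinguish two sub-cases. If $\delta_{j+1}$ lies in a node not previously touched, then the basic snake hypothesis (spectra $=\{\alpha\}$ and multiplicity-one segments, so no $\alpha'$-subsegments with $\alpha'>\beta$) forces $tord(\gamma,X_{j-1})\le \beta$ for every arc $\gamma\subset T_j$ with $tord(\gamma,\delta_j)=\beta$; consequently $\{X_{j-1},T_j\}$ is a closed $b$-cover of $X_j$ whose intersection is $b$-equivalent to $\delta_j$, and Theorem~\ref{Teo:MayerVietoris} together with $MDH_1^b(T_j)=0$ yields $MDH_1^b(X_j)\cong MDH_1^b(X_{j-1})$, consistent with $(j+1)-k_j=j-k_{j-1}$. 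If instead $\delta_{j+1}$ lies in an already touched node, then $tord(\delta_{j+1},X_{j-1})=\alpha$ through the shared node while all other tangencies are controlled exactly as before; this is precisely the situation of Lemma~\ref{Lem: gluing-triangles} with $X=X_{j-1}$, $T=T_j$, $\gamma_1=\delta_j$, $\gamma_2=\delta_{j+1}$ and $\tilde\beta=\alpha$, giving $MDH_1^b(X_j)\cong MDH_1^b(X_{j-1})\oplus \mathbb{Z}$ for $\beta\le b<\alpha$ and $MDH_1^b(X_j)\cong MDH_1^b(X_{j-1})$ for $b\ge \alpha$, matching $(j+1)-k_j=j-k_{j-1}+1$. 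Taking $j=m-1$ produces the claimed values.

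The main obstacle is the verification of conditions (i) and (ii) of Lemma~\ref{Lem: gluing-triangles} in sub-case~(b), and the analogous non-interaction claim powering the Mayer--Vietoris in sub-case~(a). Both rest on showing that, in a basic snake, the outer tangency $tord(\gamma,X_{j-1})$ of an arc $\gamma\subset T_j$ can only be raised above $\beta$ through a shared-node mechanism (yielding exactly $\alpha$), and never through exotic subsegment phenomena, a fact that follows by combining simple Lipschitz contact with the multiplicity-one assumption and the singleton spectrum $\{\alpha\}$.
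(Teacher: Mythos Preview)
Your inductive Mayer--Vietoris argument via the pancake decomposition and Lemma~\ref{Lem: gluing-triangles} is sound, but it is a genuinely different and longer route than the paper's. The paper dispatches the regime $\beta\le b<\alpha$ in one stroke: since any two nodal zones in the same node have tangency order~$\alpha>b$, all arcs in a given node are pairwise $b$-equivalent, so $X$ is $b$-equivalent to a surface~$\tilde X$ obtained by collapsing each node to a single arc. Because the segments of a basic snake have multiplicity~$1$, this~$\tilde X$ is LNE, and its canonical H\"older complex is a connected graph with $k$ vertices (one per node) and $m-1$ edges (one per segment), all of exponent~$\beta$. One then reads off $MDH_1^b(\tilde X,d_{out})=MDH_1^b(\tilde X,d_{inn})\cong\Z^{(m-1)-k+1}=\Z^{m-k}$ directly from Theorem~\ref{Teo: formula for inner HMD}. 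The cases $b<\beta$ and $b\ge\alpha$ are handled as you do.

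What each approach buys: the paper's argument is short because it recycles the inner-metric machinery of Section~\ref{section:MD-Surfaces-inner}, at the cost of the implicit step that~$\tilde X$ is LNE. Your approach is self-contained within the outer-metric toolkit of Section~\ref{sec:Examples-MDH-Outer}, but you pay for it with the verification of conditions~(i) and~(ii) of Lemma~\ref{Lem: gluing-triangles} in sub-case~(b). That verification is real work: condition~(i) demands the \emph{exact} equality $tord(\gamma,X_{j-1})=\min\{\alpha,tord(\gamma,\delta_{j+1})\}$ for $\gamma$ in the nodal zone of~$\delta_{j+1}$, which requires knowing not just that tangencies through the shared node are bounded by~$\alpha$ but that this bound is achieved and that no arc in a previously visited nodal zone sits anomalously close to~$\gamma$; likewise condition~(ii) must be checked when the node of~$\delta_j$ has itself been visited before. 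Your appeal to simple Lipschitz contact, multiplicity one, and the singleton spectrum is the right set of ingredients, but unpacking it fully is comparable in effort to the paper's shortcut through the inner classification.
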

\begin{proof}
	For $\beta \le b < \alpha$, Remark \ref{Rem: MD-Homology for b less than mu(X)} implies that $X$ is $b$-equivalent to a surface $\tilde X$ whose canonical H\"older complex, say $(G,\sigma)$, satisfies that $G$ is a connected graph with exactly $k$ vertices (the nodes of $X$), $m-1$ edges (the segments of $X$) and $\sigma \equiv \beta$ (see Proposition 4.30 of \cite{GabrielovSouza}). Hence, by Theorem \ref{Teo: formula for inner HMD}, $$MDH_{1}^{b}(X,0,d_{out},\Z) = MDH_{1}^{b}(\tilde X,0,d_{out},\Z) = \left\{\begin{array}{cc}
		0, & \text{for } b < \beta\\
		\mathbb{Z}^{(m-1) - k + 1}, & \text{for } \beta\leq b <  \alpha\\
		0, & \text{for } \alpha\leq b \\
	\end{array}\right.$$ 
\end{proof}

\begin{Cor}\label{Cor: HMD through snake names}
	Let $X$ be a basic $\beta$-snake with spectra $\{\alpha\}$. Let $W=W(X)$ be the snake name associated with $X$. If $W$ has length $m$ and exactly $k$ distinct letters, then $$MDH_{1}^{b}(X,0,d_{out},\Z) = \left\{\begin{array}{cc}
		0, & \text{for } b < \beta\\
		\mathbb{Z}^{m - k}, & \text{for } \beta\leq b <  \alpha\\
		0, & \text{for } \alpha\leq b \\
	\end{array}\right.$$
\end{Cor}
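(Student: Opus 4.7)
The plan is to reduce this corollary directly to Theorem \ref{Teo: HMD of basic snakes} by translating the combinatorial data of the snake name $W(X)$ into the geometric counts of nodes and nodal zones. So the entire content of the proof is a dictionary lemma: if $W = W(X)$ has length $m$ and exactly $k$ distinct letters, then $X$ has exactly $k$ nodes and exactly $m$ nodal zones.

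The equality between the number of distinct letters in $W$ and the number of nodes of $X$ is immediate from the construction recalled in the example following Definition \ref{Def:snake}: one fixes an orientation of $X$ from $\gamma_1$ to $\gamma_2$, assigns a new letter of the alphabet to each node the first time it is encountered, and reuses the already-assigned letter every subsequent time. Hence distinct letters in $W$ correspond bijectively to nodes of $X$, giving the count $k$.

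The equality between the length of $W$ and the number of nodal zones of $X$ follows from the second step of the construction: one traverses the link of $X$ once, respecting the orientation, and records a letter each time one \emph{enters} a node. Since the traversal goes once through the link, each nodal zone is visited exactly once, and conversely each letter in $W$ corresponds to a single passage through a nodal zone. Thus the length of $W$ equals the total number of nodal zones of $X$, i.e.\ $m$.

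Plugging $k$ nodes and $m$ nodal zones into Theorem \ref{Teo: HMD of basic snakes} yields the stated formula in all three ranges of $b$, which completes the proof. The only step that requires any real care is making the bijection ``letters of $W \leftrightarrow$ nodal zones of $X$'' precise; but since the snake name was defined exactly so that one letter is added per passage through a node (equivalently, per nodal zone traversed), there is no obstacle beyond the formal reading of the construction in \cite{GabrielovSouza}.
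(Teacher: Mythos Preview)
Your proof is correct and follows exactly the same approach as the paper: translate the snake-name data (length $m$, $k$ distinct letters) into the geometric counts ($m$ nodal zones, $k$ nodes) and then invoke Theorem \ref{Teo: HMD of basic snakes}. The only difference is that you spell out the letter/node and letter-position/nodal-zone bijections in more detail, whereas the paper states them in a single sentence.
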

\begin{proof}
	Since $W$ has length $m$ and exactly $k$ distinct letters, $X$ has exactly $m$ nodal zones and $k$ nodes. Therefore, the result follows from Theorem \ref{Teo: HMD of basic snakes}.
\end{proof}

\begin{remark}
	In the hypothesis of Corollary \ref{Cor: HMD through snake names}, the number of generators of the MD-Homology coincides with the number of distinct semi-primitive subwords of $W$ (see Definition 6.5 of \cite{GabrielovSouza}). Moreover, there are basic snakes with the same MD-Homology which are not weakly outer Lipschitz equivalent, since they have different number of nodes (see Figure \ref{Fig. snakes not weakly outer equiv}).
\end{remark}

\begin{figure}[h!]
	\centering
	\includegraphics[width=3in]{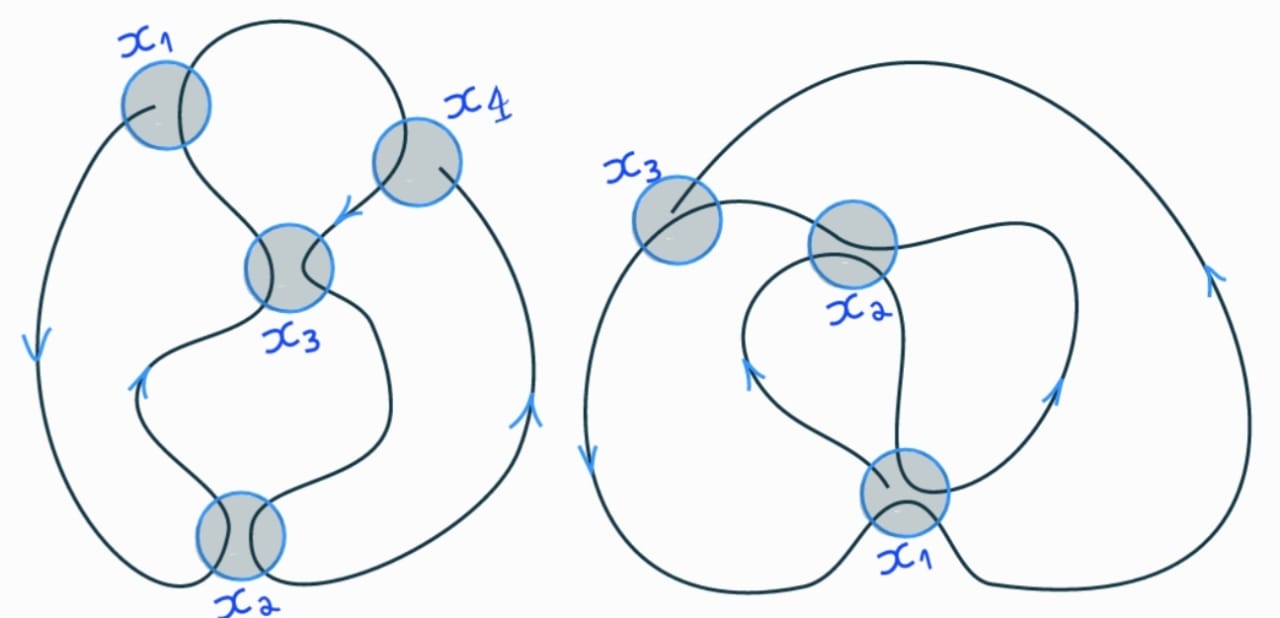}
	\caption{Links of two basic $\beta$-snakes which are not weakly outer Lipschitz equivalent but have the same spectra and the same MD-Homology. Points inside the shaded disks represent arcs with tangency order higher than $\beta$.}\label{Fig. snakes not weakly outer equiv}
\end{figure}

\begin{Lem}\label{Lem: both either nodal or segment arcs}
	Let $X$ be a $\beta$-snake and $\gamma,\gamma' \in V(X)$. If $tord(\gamma,\gamma')>\beta = itord(\gamma,\gamma')$ then both $\gamma$ and $\gamma'$ are simultaneously either segment arcs or nodal arcs.
\end{Lem}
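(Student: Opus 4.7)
The plan is to argue by contradiction using the maximality of nodes. Suppose for contradiction, without loss of generality, that $\gamma$ is a segment arc lying in some segment $S$ while $\gamma'$ is a nodal arc lying in some nodal zone $N \subset \mathcal{N}$ of a node $\mathcal{N}$. Since the decomposition of $V(X)$ into segments and nodal zones is disjoint (see the remark following Definition \ref{Def:snake}), $\gamma \notin \mathcal{N}$. The plan is to contradict this by showing that the hypothesis $tord(\gamma,\gamma') > \beta$ forces $\gamma \in \mathcal{N}$, independently of the fact that $itord(\gamma,\gamma')=\beta$.

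The central step is the following saturation property of a node: any arc $\tilde\gamma \in V(X)$ satisfying $tord(\tilde\gamma,\gamma'') > \beta$ for some nodal arc $\gamma'' \in \mathcal{N}$ must itself lie in $\mathcal{N}$. To prove this, let $\gamma_1,\ldots,\gamma_k$ be representative arcs, one from each nodal zone constituting $\mathcal{N}$. By Definition 4.31 of \cite{GabrielovSouza}, $tord(\gamma_i,\gamma_j) > \beta$ for all $i,j$. Applying the outer triangle inequality $tord(x,z) \ge \min\{tord(x,y),tord(y,z)\}$ to the triple $(\tilde\gamma,\gamma'',\gamma_i)$ yields $tord(\tilde\gamma,\gamma_i) \ge \min\{tord(\tilde\gamma,\gamma''),tord(\gamma'',\gamma_i)\} > \beta$ for every $i$, so $\tilde\gamma$ has outer tangency strictly greater than $\beta$ with every nodal zone of $\mathcal{N}$. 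The maximality of $\mathcal{N}$ as a zone (Definition \ref{Def: maximal zone in}) whose arcs are pairwise at outer tangency strictly greater than $\beta$ then forces $\tilde\gamma \in \mathcal{N}$: otherwise, enlarging $\mathcal{N}$ by the zone through $\tilde\gamma$, joined to any $\gamma_i$ by a non-singular H\"older triangle whose interior arcs retain pairwise $tord > \beta$ with $\mathcal{N}$ by the same triangle inequality, would produce a strictly larger zone with pairwise outer tangency $> \beta$, contradicting the maximality of $\mathcal{N}$.

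Applying the saturation property with $\tilde\gamma = \gamma$ and $\gamma'' = \gamma'$ yields $\gamma \in \mathcal{N}$, contradicting $\gamma \in S$ and completing the proof. The main obstacle I anticipate is the final step of the saturation argument, namely verifying rigorously that an arc $\tilde\gamma$ with outer tangency $> \beta$ to every nodal zone of $\mathcal{N}$ can actually be joined to $\mathcal{N}$ by an enlarging zone whose arcs inherit the same tangency property; this requires invoking the classification of zones in a snake and the explicit description of nodes as maximal such zones from Section 4 of \cite{GabrielovSouza}. Some additional care is also needed to handle the spiral snake case, where segments and nodal zones need not be LNE (cf.\ Proposition 4.55 of \cite{GabrielovSouza}); there one can reduce to the non-spiral case by a pancake decomposition, since the structural description of nodes used in the argument above is independent of the LNE property of the individual zones.
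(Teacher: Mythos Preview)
Your saturation property is false as stated, and this is where the argument breaks down. You claim that any arc $\tilde\gamma\in V(X)$ with $tord(\tilde\gamma,\gamma'')>\beta$ for some nodal arc $\gamma''\in\mathcal N$ must lie in $\mathcal N$. But take any segment arc $\tilde\gamma$ that is inner-close to an adjacent nodal zone, say $itord(\tilde\gamma,\gamma'')>\beta$ for some $\gamma''\in\mathcal N$; then automatically $tord(\tilde\gamma,\gamma'')\ge itord(\tilde\gamma,\gamma'')>\beta$, yet $\tilde\gamma$ remains a segment arc and is \emph{not} in $\mathcal N$. This is exactly why the hypothesis $itord(\gamma,\gamma')=\beta$ cannot be discarded: your explicit remark that the argument proceeds ``independently of the fact that $itord(\gamma,\gamma')=\beta$'' is a warning sign, since the statement becomes false without it.

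The underlying error is in your characterization of a node. A node $\mathcal N$ is \emph{not} defined as a maximal zone whose arcs have pairwise outer tangency $>\beta$; it is a union of nodal zones, and nodal zones are defined structurally (via the multiplicity function and the local pizza-type geometry in Section~4 of \cite{GabrielovSouza}), with the tangency condition $tord>\beta$ between nodal zones of the same node being a consequence rather than a defining property. So there is no maximality to contradict by adjoining $\tilde\gamma$, and your enlargement step has no force. The paper's own proof simply cites Lemma~3.33 of \cite{CostaSouza2024}; an actual argument has to use the structural description of nodal versus segment arcs (e.g.\ constancy of the multiplicity along inner-close arcs, as in Lemma~4.24 and Proposition~4.27 of \cite{GabrielovSouza}) together with the hypothesis $itord(\gamma,\gamma')=\beta$ to force both arcs to the same type.
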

\begin{proof}
	This is a consequence of Lemma 3.33 in \cite{CostaSouza2024}, for circular $\beta$-snakes, and its proof.  
\end{proof}
\begin{Teo}\label{Teo: simple snakes and MD-Homology}
	Let $X$ and $\tilde{X}$ be two simple $\beta$-snakes. If the following conditions simultaneously hold then they have the same MD-Homology.
	\begin{enumerate}
		\item they are weakly outer equivalent;
		\item for any two disjoint zones $Z,Z'\subset V(X)$, with $tord(Z,Z')>\beta$, which are both segments or nodal zones and their corresponding segments and nodal zones $\tilde{Z}, \tilde{Z}'\subset V(\tilde{X})$, with respect to the weakly outer homeomorphism, we have $$E_{Z,Z'}^\alpha\neq \emptyset \Leftrightarrow E_{\tilde{Z},\tilde{Z}'}^\alpha\neq \emptyset,$$ for any $\alpha\in \F$, $\alpha>\beta$. 
	\end{enumerate}
	
\end{Teo}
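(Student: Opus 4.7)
The plan is to show, for every $b \in \F$, $b \geq 1$, that the $b$-MD-Homology groups of $X$ and $\tilde X$ in the outer metric are isomorphic in every degree. By Theorem~\ref{Teo: MD-dim-ge-2}, both groups vanish in degrees $n \geq 2$, and in degree $0$ each is isomorphic to $\Z$ since $X$ and $\tilde X$ are germs of connected sets at the origin. The substantive content thus lies in degree $n=1$.

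I first dispose of the trivial ranges. For $b < \beta$, Remark~\ref{Rem: MD-Homology for b less than mu(X)} gives $MDH_1^b(X) \cong MDH_1^b(\tilde X) \cong 0$. For $b$ larger than every $\alpha$ for which some $E_{Z,Z'}^\alpha \neq \emptyset$ (a range that is the same on both sides by condition~(2)), the method of Lemma~\ref{Lem: MD-Homology of glued triangles} shows that both snakes are $b$-equivalent to a finite union of $b$-contractible LNE H\"older triangles meeting along a single arc, so $MDH_1^b$ vanishes again.

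For the intermediate range I would proceed by induction on the number of segments of $V(X)$, following the template of Lemmas~\ref{Lem: gluing-triangles} and~\ref{Lem: MD-Homology of glued triangles}. At each step I pick a segment $S \subset V(X)$ together with its image $\tilde S \subset V(\tilde X)$ supplied by condition~(1), and I form an open $b$-cover $\{U,V\}$ of $X$ (and the analogous $\{\tilde U, \tilde V\}$ of $\tilde X$) using the $b$-horn neighborhood of the H\"older triangle $T(S)$ and the $b$-horn neighborhood of the closure of its complement. Theorem~\ref{Teo:MayerVietoris} then yields matched Mayer-Vietoris long exact sequences. The inductive hypothesis identifies $MDH_*^b(V) \cong MDH_*^b(\tilde V)$ (the complement of a single segment is again a simple snake with fewer segments, and conditions~(1) and~(2) restrict to it), while $MDH_*^b(U) \cong MDH_*^b(\tilde U)$ because both sides $b$-retract onto a single arc.

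The main obstacle is the comparison of the overlaps, that is, establishing $MDH_*^b(U \cap V) \cong MDH_*^b(\tilde U \cap \tilde V)$. Lemma~\ref{Lem: both either nodal or segment arcs} ensures that $b$-equivalence never mixes segment arcs with nodal arcs, so the problem reduces to tracking, for each segment $S'$ adjacent to $S$ in the weakly outer structure, how the $\alpha$-subsegments $E_{S,S'}^\alpha$ with $\alpha > b$ fuse arcs of $S$ to arcs of $S'$ under $\sim_b$, together with the analogous statement for subnodal zones inside each node. The simplicity hypothesis forces the family $\{E_{S,S'}^\alpha\}_\alpha$ to form a nested filtration of $S$ as $\alpha$ varies, so that condition~(2) produces a level-preserving bijection between the filtrations on the two sides and hence a $b$-equivalence between $U \cap V$ and $\tilde U \cap \tilde V$ as metric germs. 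Applying the Five Lemma to the two Mayer-Vietoris sequences then closes the inductive step, yielding the theorem.
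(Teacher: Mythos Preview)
The paper takes a much shorter route by contradiction. If $MDH_1^b(X)\not\cong MDH_1^b(\tilde X)$ for some $b>\beta$, then after possibly swapping roles there exist $b$-equivalent arcs $\gamma,\gamma'\in V(X)$ whose images $h(\gamma),h(\gamma')$ are not $b$-equivalent; since $h$ is weakly outer, $tord(\gamma,\gamma')>b\ge tord(h(\gamma),h(\gamma'))>\beta$ with $itord=\beta$, and Lemma~\ref{Lem: both either nodal or segment arcs} forces both arcs to be segment arcs (or both nodal). Setting $\alpha=tord(\gamma,\gamma')$ yields $E_{Z,Z'}^\alpha\neq\emptyset$ but $E_{\tilde Z,\tilde Z'}^\alpha=\emptyset$, violating~(2). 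No induction, no Mayer--Vietoris.

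Your inductive scheme has two genuine gaps. First, the complement of a single segment in a $\beta$-snake is not a $\beta$-snake: removing an interior segment disconnects the H\"older triangle into two pieces, and the resulting triangles need not be abnormal surfaces, so the inductive hypothesis does not apply as phrased. Second, and more seriously, the Five Lemma requires a \emph{morphism} between the two Mayer--Vietoris sequences, but the weakly outer homeomorphism $h$ is not bi-Lipschitz and does not induce maps on $MDC_\bullet^b$; you have two exact sequences with no map connecting them. You could try a pure rank count instead (as in Lemma~\ref{Lem: gluing-triangles}), but that reduces everything to showing $\mathrm{rank}\,MDH_0^b(U\cap V)=\mathrm{rank}\,MDH_0^b(\tilde U\cap\tilde V)$, i.e., that the overlaps have the same number of $b$-equivalence classes of arcs. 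Your observation that simplicity forces $\{E_{S,S'}^\alpha\}_\alpha$ to be nested is correct, and together with condition~(2) this controls the contact of $S$ with each individual $S'$; what is missing is the argument for how the $b$-close pieces coming from \emph{different} neighbours $S',S'',\ldots$ and from the two adjacent nodal zones interact inside $U\cap V$, which is what actually determines the count.
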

\begin{proof}
	Let $X$ and $\tilde X$ be two simple $\beta$-snakes weakly outer equivalent but with different MD-Homologies. In this case, there exists $b \in (\beta,\infty)$ such that $X$ and $\tilde X$, up to $b$-equivalence, have different number of generators on the respective MD-homology of degree 1. Thus, denoting by $h\colon X\rightarrow \tilde X$ the weakly outer homeomorphism between $X$ and $\tilde X$, there are $b$-equivalent arcs $\gamma,\gamma' \in V(X)$ such that $\tilde\gamma = h(\gamma)$ and $\tilde\gamma' = h(\gamma')$ are not $b$-equivalents. Once $h$ is a weakly outer homeomorphism, by Remark \ref{Rem: b-equiv and tord of arcs} and Proposition 4.10 of \cite{GabrielovSouza}, we must necessarily have $tord(\gamma,\gamma')>b\ge tord(\tilde\gamma,\tilde\gamma') >\beta$ and $itord(\gamma,\gamma') = \beta = itord(\tilde\gamma,\tilde\gamma')$. Therefore, by Lemma \ref{Lem: both either nodal or segment arcs} we may assume that both $\gamma, \gamma'$ (and consequently $\tilde\gamma,\tilde{\gamma'}$) are both segment arcs or nodal arcs. In particular, for $\alpha=tord(\gamma,\gamma')$, we would have $E_{Z,Z'}^\alpha\neq \emptyset$ but $E_{\tilde{Z},\tilde{Z}'}^\alpha = \emptyset$, where $Z,Z'$ are the segments (resp., nodal zones) containing $\gamma,\gamma'$, respectively, and $\tilde Z,\tilde Z'$ are the corresponding segments (resp., nodal zones) of $\tilde{X}$, a contradiction with condition $(2)$.
\end{proof}

\begin{Cor}\label{Cor: basic snakes and MD-Homology}
	Let $X$ and $\tilde{X}$ be basic $\beta$-snakes. If they are weakly outer equivalent and have the same spectra then they have the same MD-Homology.
\end{Cor}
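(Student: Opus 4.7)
The plan is to apply Theorem \ref{Teo: simple snakes and MD-Homology} directly, since a basic $\beta$-snake is, by Definition \ref{Def:Simple-basic-snake}, automatically a simple $\beta$-snake. Hypothesis (1) of that theorem is already part of the statement, so all the work reduces to verifying the subsegment/subnodal-zone matching condition (2).

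Let $\{\alpha_0\}$ denote the common spectrum of $X$ and $\tilde X$. The key observation is that in a basic $\beta$-snake every segment has multiplicity $1$, so no two distinct segments tangent at an order strictly greater than $\beta$, and within a single segment all pairs of arcs have tangency order exactly $\beta$. Consequently, if $Z, Z' \subset V(X)$ are disjoint and simultaneously both segments or both nodal zones with $tord(Z,Z') > \beta$, then $Z$ and $Z'$ must be the two nodal zones of a single common node, and for such a pair $tord(Z,Z') = \alpha_0$. Thus $E_{Z,Z'}^{\alpha} \neq \emptyset$ if and only if $\alpha = \alpha_0$.

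By the remark following Definition \ref{Def:weakly-outer-bi-lip}, which records Theorem 6.28 of \cite{GabrielovSouza}, the weak outer equivalence $h \colon X \to \tilde X$ matches segments with segments and nodal zones with nodal zones, and moreover respects the grouping of nodal zones into nodes. Hence a pair of nodal zones sharing a node in $X$ corresponds under $h$ to a pair of nodal zones sharing a node in $\tilde X$. Applying the previous observation to $\tilde X$ (which is again basic with spectrum $\{\alpha_0\}$) yields $E_{\tilde Z,\tilde Z'}^{\alpha} \neq \emptyset$ if and only if $\alpha = \alpha_0$. This gives the biconditional $E_{Z,Z'}^{\alpha} \neq \emptyset \iff E_{\tilde Z,\tilde Z'}^{\alpha} \neq \emptyset$ required in condition (2), so Theorem \ref{Teo: simple snakes and MD-Homology} yields the claimed isomorphism of MD-Homology groups.

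The only point requiring real care is the assertion that in a basic snake the only super-$\beta$ tangencies between disjoint segments-or-nodal-zone pairs occur between the nodal zones of a common node; this rests on the multiplicity-$1$ condition on segments combined with the assumption that the spectrum is a singleton. Once this is in hand, the preservation of the node assignment by weak outer equivalence does the rest, so the argument is short.
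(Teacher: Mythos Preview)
Your proof is correct and follows essentially the same approach as the paper's: both reduce to Theorem \ref{Teo: simple snakes and MD-Homology} by observing that in a basic $\beta$-snake the only pairs of disjoint segments-or-nodal-zones with tangency order exceeding $\beta$ are pairs of nodal zones in a common node, so that the ``same spectra'' hypothesis immediately forces condition (2). Your write-up is somewhat more explicit than the paper's (you spell out the biconditional $E_{Z,Z'}^{\alpha}\neq\emptyset \iff \alpha=\alpha_0$ and invoke the node-preservation under weak outer equivalence), but the underlying argument is identical.
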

\begin{proof}
	Since $X$ and $\tilde{X}$ are basic $\beta$-snakes, we could only possibly have $tord(Z,Z')>\beta$ (resp., $tord(\tilde Z,\tilde Z')>\beta$) when $Z$ and $Z'$ are distinct nodal zones of $X$ (resp., $\tilde X'$). Thus, as $X$ and $\tilde{X}$ have the same spectra, condition $(2)$ of Theorem \ref{Teo: simple snakes and MD-Homology} is satisfied. Therefore, $X$ and $\tilde{X}$ have the same MD-Homology.
\end{proof}

\begin{remark}
	Although we have determined the MD-Homology group of a basic snake from its snake name (see Corollary \ref{Cor: HMD through snake names}), the task is rather complicated when dealing with simple snakes (and consequently way worse for surfaces in general), due to the inherent combinatorics of the cluster partitions of segments containing two or more segments and the spectrum of their nodes. For example, for surfaces which link is as in the left diagram in Figure \ref{Fig. snakes not weakly outer equiv}, we will obtain different MD-Homology groups, depending on the choice of the spectrum in each node.
\end{remark}


\begin{Exam}
	The converse of Corollary \ref{Cor: basic snakes and MD-Homology} (and consequently, the one of Theorem \ref{Teo: simple snakes and MD-Homology}) is not true. Indeed, given $\alpha \in \F$, $\alpha > \beta$, let $X$ and $\tilde X$ be the basic $\beta$-snakes with spectra $\{\alpha\}$, such that $W(X) = [x_1x_2x_3x_1x_4x_3x_2x_4]$ and $W(\tilde X) = [x_1x_2x_1x_2x_3x_1x_3]$ (such snakes exists by Theorem 6.23 in \cite{GabrielovSouza}, see Figure \ref{Fig. snakes not weakly outer equiv} to visualize their links). They are not weakly outer equivalent, since they have a different number of nodes (which is equivalent to their words having a different number of distinct letters), but 
	$$MDH_{1}^{b}(X,0,d_{out}) = MDH_{1}^{b}(\tilde X,0,d_{out})= \left\{\begin{array}{cc}
		0, & \text{for } b < \beta\\
		\mathbb{Z}^{4}, & \text{for } \beta\leq b <  \alpha\\
		0, & \text{for } \alpha\leq b \\
	\end{array}\right.$$

	Moreover, condition $(2)$ of Theorem \ref{Teo: simple snakes and MD-Homology} could not be weakened. For an example, let $X$ and $\tilde{X}$ be $\beta$-snakes with link as in Figure \ref{Fig. snakes weakly outer equiv}, from the left to the right, respectively. The Valette links of those surfaces contain two segments with multiplicity $2$: the ones with adjacent nodal zones in the nodes $\mathcal{N}, \mathcal{N}'$ (say $S, S'$) and $\mathcal{\tilde{N}}, \mathcal{\tilde{N}}'$ (say $\tilde{S}, \tilde{S}'$). Let us further assume that $1\le\beta < \alpha_1 <\alpha_2$ are exponents such that  
	$$\sup\{tord(\gamma,\gamma') \mid \gamma \in S,\gamma'\in S'\} = \alpha_1 = \sup\{tord(\gamma,\gamma') \mid \gamma \in \tilde{S},\gamma'\in \tilde{S}'\},$$
	$$Spec(\mathcal{N})=Spec(\mathcal{N}')=\{\alpha_1\}; \; Spec(\mathcal{\tilde N})=Spec(\mathcal{\tilde N}') = \{\alpha_{2}\},$$
	while the spectrum of all the other nodes of $X$ and $\tilde{X}$ is $\{\alpha_{2}\}$. Therefore, $X$ and $\tilde{X}$ are weakly outer Lipschitz equivalent, but since condition $(2)$ of Theorem \ref{Teo: simple snakes and MD-Homology} is not satisfied by the corresponding nodal zones in $\mathcal{N}$ and $\mathcal{\tilde N}$ (or $\mathcal{N}'$ and $\mathcal{\tilde N}'$), they do not have the same MD-Homology. Indeed, for $\alpha_1 \le b <\alpha_{2}$, $MDH_1^b(X,0,d_{out})$ has two generators while $MDH_1^b(\tilde X,0,d_{out})$ has four.
\end{Exam}

\end{document}